\documentclass[11pt, a4paper]{amsart}
%%%%%LAYOUT%%%%%%%%%%%%%%%%%%%%%%%%%%%%%%%%%%%%%%%%%%%%%%%%%%%%%%%%%%%%%%%%%%
%old layout to understand length of paper ;-)
%\setlength{\textwidth}{16.8cm}
%\setlength{\textheight}{24.3cm}
%\hoffset=-55pt
%\voffset=-25pt
\setlength{\textwidth}{15.3cm}
\setlength{\textheight}{21.6cm}
\hoffset=-35pt
\setcounter{tocdepth}{1}
%%%%MATH PACKAGES%%%%%%%%%%%%%%%%%%%%%%%%%%%%%%%%%%%%%%%%%%%%%%%%%%%%%%%%%%%%%%
%\usepackage{refcheck}
\usepackage{amsfonts,amssymb,amsmath, amsthm}
\usepackage{mathrsfs, mathtools}
\usepackage{lmodern}
\usepackage{qsymbols}
\usepackage{latexsym}
\usepackage[noadjust]{cite}
\usepackage{pdfsync}
\usepackage{bm}
%%%%NO ITALIC NUMBERS IN ENUMERATION%%%%%%%%%%%%%%%%%%%%%%%%%%%%%%%%%%%%%%%%%%%
\usepackage{enumitem}

%%%%THEOREM STYLES%%%%%%%%%%%%%%%%%%%%%%%%%%%%%%%%%%%%%%%%%%%%%%%%%%%%%%%%%%%%%
\newtheorem{thm}{Theorem}[section]
\newtheorem{lem}[thm]{Lemma}
\newtheorem{prop}[thm]{Proposition}

\theoremstyle{definition}
\newtheorem{defn}[thm]{Definition}
\newtheorem{rem}[thm]{Remark}
\newtheorem{exm}[thm]{Example}

\numberwithin{equation}{section}
%%%%Setup for hyperref %%%%%%%%%%%%%%%%%%%%%%%%%%%%%%%%%%%%%%%%%%%%%%%
\usepackage[plainpages=false,pdfpagelabels,backref=page,citecolor=red]{hyperref}
\usepackage{xcolor}
\hypersetup{
colorlinks,
linkcolor={cyan!90!black},
citecolor={magenta},
urlcolor={green!40!black}
} %important to load after amsthm and before the rest
%\usepackage{refcheck}
%%%%MATHBB-SYMBOLS%%%%%%%%%%%%%%%%%%%%%%%%%%%%%%%%%%%%%%%%%%%%%%%%%%%%%%%%%%%%%
\newcommand{\R}{\mathbb{R}}

 % fractional derivative
%%%%MATHCAL-SYMBOLS%%%%%%%%%%%%%%%%%%%%%%%%%%%%%%%%%%%%%%%%%%%%%%%%%%%%%%%%%%%%

 %generic Hilbert space
 %Schwartz space

 %Fourier transform

%%%%MATHRM-SYMBOLS%%%%%%%%%%%%%%%%%%%%%%%%%%%%%%%%%%%%%%%%%%%%%%%%%%%%%%%%%%%%

\newcommand{\loc}{\operatorname{loc}}
\renewcommand{\L}{\operatorname{L}} %Lebesgue spaces
\renewcommand{\H}{\operatorname{H}}
 %local Lebesgue spaces

%spaces of Lipschitz functions
 %spaces of continuous functions
\renewcommand{\H}{\operatorname{H}} %Sobolev spaces, spaces of holomorphic functions
\newcommand{\W}{\operatorname{W}}% Sobolev spaces
 %homogeneous Sobolev spaces
 %local Sobolev spaces
	%Keeping the paragraph symbol
 %Sector in the complex plane
%\newcommand{\I}{I}

 %Energy space
 %form domain

%%%%MORE MACROS %%%%%%%%%%%%%%%%%%%%%%%%%%%%%%%%%%%%%%%%%%%%%%%%%%%%%%%%%%%%%%%

%%%%FORMALISM%%%%%%%%%%%%%%%%%%%%%%%%%%%%%%%%%%%%%%%%%%%%%%%%%%%%%%%%%%%%%%%

\renewcommand{\div}{\operatorname{div}}

 %half time derivative
 %Hilbert transform

%%%%SYMBOLS%%%%%%%%%%%%%%%%%%%%%%%%%%%%%%%%%%%%%%%%%%%%%%%%%%%%%%%%%%%%%%%%%%%%
 %Euler's number

 %imaginary unit
\renewcommand{\d}{\, \mathrm{d}} %differential
 %epsilon

%%%%MATH OPERATORS%%%%%%%%%%%%%%%%%%%%%%%%%%%%%%%%%%%%%%%%%%%%%%%%%%%%%%%%%%%%%
 %bounded linear operators
 %closure
\DeclareMathOperator{\supp}{supp} %support
 %identity
 %range
 %nullspace
 %domain
 %HL Max Operator
\newcommand{\sgn}{\operatorname{sgn}}
%%%%BRUTE FORCE DEFINITION OF AVERAGE INTEGRALS%%%%%%%%%%%%%%%%%%%%%%%%%%%%%%%%
\def\Xint#1{\mathchoice
{\XXint\displaystyle\textstyle{#1}}%
{\XXint\textstyle\scriptstyle{#1}}%
{\XXint\scriptstyle\scriptscriptstyle{#1}}%
{\XXint\scriptscriptstyle%
\scriptscriptstyle{#1}}%
\!\int}
\def\XXint#1#2#3{{\setbox0=\hbox{$#1{#2#3}{%
\int}$ }
\vcenter{\hbox{$#2#3$ }}\kern-.6\wd0}}
\def\barint{\,\Xint -} % \, corrects the \! used in the definition
\def\bariint{\barint_{} \kern-.4em \barint}
\def\bariiint{\bariint_{} \kern-.4em \barint}
\renewcommand{\iint}{\int_{}\kern-.34em \int} %\, minor space between the integrals
\renewcommand{\iiint}{\iint_{}\kern-.34em \int} %\, minor space between the integrals
%%%%MAKROS FOR TERMS IN SECTION 6
%\newcommand{\I}{J}
%\newcommand{\T}{I}
%\newcommand{\tildeT}{II}
%\newcommand{\hatT}{III}
%\newcommand{\barT}{\T^{\tilde{\theta}}}
%%%%FOR COMMENTS AND NEW PARTS

\author{Alireza Ataei}
\email{alireza.ataei@math.uu.se}
\address{Department of Mathematics, Uppsala University, S-751 06 Uppsala,
Sweden}

\thanks{}

\subjclass[2020]{
Primary: 35K10, 35K20; Secondary: 28A33
% 	28A33 	Spaces of measures, convergence of measures 
%	35K20  	Initial-boundary value problems for second-order parabolic equations
%	35K10  	Second-order parabolic equations
%
 }
\keywords{
convection-diffusion equations, regular and Muckenhoupt weights, strong maximum principle, conservation of mass
}

\begin{document}
\title{Existence and uniqueness of the solutions to Convection-Diffusion equations}

\begin{abstract}
    In this work, we study convection-diffusion equations in the cases of bounded drifts and drifts induced by the gradient of a potential.
    We define a new notion of solution and prove its existence and uniqueness. Furthermore, we show the conservation of mass, the convergence to the initial data, and the strong maximum principle.   
\end{abstract}

\maketitle

\section{Introduction and the statements of the results}
Convection-diffusion equations appear in many parts of science describing several phenomena, see \cite{FG,G,R,S,T.S,SJ}. The general prototype of convection-diffusion equations is 
\begin{align*}
    \partial_t u - \div A \nabla u + \div(v \, u) = R,
\end{align*}
with initial data $g$, where $g\in \L^1(\R^n), R \in \L^1([0,T] \times \R^n)$, $A=(a_{ij})_{i,j=1}^n$ is an elliptic matrix-valued function, see \eqref{eq:ellipticcond} for more detail, and $v$ is either bounded or is of the form $v = -\nabla V$ where $e^V+e^{-V} \in \L^1_{\loc}(\R^n)$ and $e^{-V}$ is a regular weight, which means that for every function $f \in \W^{1,1}_{\loc}(\R^n)$ satisfying 
\begin{align*}
   \|f\|^2_{\H^1_{e^{-V}}(\R^n)} := \int_{\R^n} \big(|f|^2 +|\nabla f|^2\big) \, e^{-V} \d x < \infty,
\end{align*}
there exists a sequence $f_i \in C^{\infty}_0(\R^n)$ such that 
\begin{align*}
  \lim_{i \to \infty} \|f_i-f\|_{\H^1_{e^{-V}}}=0.
\end{align*}
The existence and uniqueness of solutions for such equations are not yet fully understood.

There are several ways to define solutions if the drift $v$ is not smooth, namely very weak solutions, weak solutions, entropy solutions, and renormalized solutions. The weak solution $u \in \L^1([0,T];\W^{1,1}_{\loc}(\R^n))$ satisfies
\begin{align}
    \label{eq:weaksolconv1}
     \int_0^T \int_{\R^n} -u \, \partial_t \phi +A \nabla u \cdot \nabla \phi - u \,  v \cdot \nabla \phi \, \d x \d t =  \int_{\R^n} g \, \phi\, \d x \bigg |_0 + \int_0^T \int_{\R^n} R \, \phi \, \d x,
\end{align}
 for every $\phi \in C^{\infty}_0([0,T] \times \R^n)$ with $\phi(T,x)=0$ for every $x \in \R^n$. Moreover, assuming that
 $A$ satisfies 
\begin{align}
\label{eq:derviassum}
    \sum_{i=1}^n \frac{\partial a_{ij}}{\partial x_i} \in \L^{\infty}([0,T] \times \R^n), 
\end{align}
for all $1 \leq j \leq n$, then a very weak solution $u \in \L^1([0,T];\L^1_{\loc}(\R^n))$ is defined by the property
\begin{align}
\label{eq:weaksolconv2}
  \int_0^T \int_{\R^n} u \, (-\partial_t \phi + \div A^* \nabla \phi - v \cdot \nabla \phi) \, \d x \d t =  \int_{\R^n} g \, \phi\, \d x \bigg |_0 + \int_0^T \int_{\R^n} R \, \phi \, \d x.
\end{align} for every $\phi \in C^{\infty}_0([0,T] \times \R^n)$ with $\phi(T,x)=0$ for every $x \in \R^n$. If the Aronson-Serrin condition holds, see \cite{AS}, then one can use regularity estimates in \cite{AS} to derive the existence of weak solutions. In the case that $v$ and $\div v$ are bounded, $R=0$, and \eqref{eq:derviassum}, see \cite{F}, it is proved that a unique very weak solution exists, see also \cite[Thm. 6.6.2]{BKRS} for the case of replacing $\div A \nabla u$ with $ \sum_{i,j=1}^n\frac{\partial}{\partial x_i} \frac{\partial}{\partial x_j}(a_{ij} u)$ which does not require boundedness condition on $\div v$. In general case, however, there are some restrictions to the notion of weak and very weak solutions. First, it makes sense only if the product $u \, v$ is locally integrable, which is unclear if $v= -\nabla V$ where $V: \R^n \to \R$ does not belong to a Sobolev space. The second and perhaps more crucial issue is uniqueness. If one removes the assumption of boundedness for $v$ or \eqref{eq:derviassum}, then there are counterexamples of uniqueness. For example, in \cite[Thm. 1.2]{MS}, it is proved that there are $\L^p$-bounded drifts $v$ satisfying $\div v=0$ with infinitely many weak solutions satisfying \eqref{eq:weaksolconv1} in the periodic case, replacing $\R^n$ with $\mathbb{T}^n$. In the case of $R=0, A= I,$ and smooth drifts $v$, it is necessary to have a special growth bound on $v$, see \cite[Ch. 9]{BKRS}, otherwise there are infinitely many very weak solutions for any initial data which is a probability density function, see \cite[Thm. 1]{BKS}. Moreover, in Example \ref{exm:example}, motivated by \cite{Prignet,Serrin}, we prove the lack of uniqueness if \eqref{eq:derviassum} does not hold. To resolve the issues of existence and uniqueness,
the notion of entropy solutions was introduced in \cite{Prignet,BOP} for nonlinear parabolic equations, see also \cite{BBGGPV} for nonlinear elliptic equations. The advantages are that the entropy solutions exist even for square-integrable drifts, and whenever $v=0$ they are unique. However, if the drift is non-zero, the uniqueness and the connection to weak solutions are not clear. Finally, the notion of the renormalized solutions was defined to address some of the previous issues, see the original idea for transport equations in  \cite{DL} and then for the Fokker-Planck equations in \cite{LL}. However, for the existence and uniqueness, one requires certain properties such as $\div v$ being bounded, see \cite{LL,LL1}. We also mention that in \cite{P2} it is proven that if a weak solution $u$ exists for $A=I$ and $|v|^2 \, |u|$ is integrable, then the weak solution is unique and it is a renormalized solution (although the proof is for bounded sets one can generalize it to $\R^n$). 

Motivated by \cite{DL,P2,P3}, we define the following notion of solutions, see Definition \ref{def:convec1} and Definition \ref{def:convec2} for more details: A convection solution $u$ of 
\begin{align*}
    \partial_t u - \div A \nabla u + \div(v \,u) = R,
\end{align*}
in $[0,T] \times \R^n$ with initial data $g$, satisfies 
\begin{align}
\label{eq:convecequation}
     \int_0^T \int_{\R^n} u\, f\, \d x \d t =\int_{\R^n} g\, \phi \, \d x \bigg |_0 + \int_{0}^T \int_{\R^n} R\, \phi \, \d x \d t,
\end{align}
for every $f \in C^{\infty}_0([0,T] \times \R^n)$ and $\phi$ satisfying the adjoint equation $$-\partial_t \phi -\div A^* \nabla \phi - v \cdot \nabla \phi = f,$$ in $[0,T] \times \R^n$ with $\phi(T,x)=0$ for a.e. $x \in \R^n$, where $A^*$ is the transpose of $A$. We prove, by  Proposition \ref{prop:dualsol} and Proposition \ref{prop:advecsol}, that the adjoint equation has a unique solution. The benefits of working with the adjoint equation $-\partial_t \phi -\div A^* \nabla \phi - v \cdot \nabla \phi = f$ are the maximum principle property and energy estimates, see Proposition \ref{prop:dualsol} and Proposition \ref{prop:advecsol}, which implies the unique existence in a certain class of functions. The definition of convection solution is a generalization of very weak solutions by setting $-\partial_t \phi -\div A^* \nabla \phi - v \cdot \nabla \phi =f$ in \eqref{eq:weaksolconv2}. However, the uniqueness property and maximum principle are immediate from the definition of convection solutions, which is not the case for weak solutions.

In the first part of the paper, we focus on bounded drifts and prove the following: Let $A$ be an elliptic measurable matrix-valued function, see \eqref{eq:ellipticcond}. Then, the following holds.
\begin{thm}
\label{thm:boundeddrift}
    Let $v \in \L^{\infty}([0,T] \times \R^n; \R^n), g \in \L^1(\R^n),$ and $R \in \L^1([0,T] \times \R^n) $. Then, there exists a unique convection solution $u \in \L^1([0,T] \times \R^n) \cap \L^p([0,T];\W^{1,p}_{\loc}(\R^n))$, for every $1 \leq p<\frac{n}{n-1}$, of \begin{align*}
    \partial_t u - \div A \nabla u + \div(v\, u) = R,
\end{align*}
in $[0,T] \times \R^n$ with initial data $g$. Moreover, $u$ satisfies the following  properties:\\\\
(i). Let $\varphi_{\epsilon}$ be a non-negative smooth approximation of the identity and $u_{\epsilon} \in \L^2([0,T];\H^1(\R^n))$ be the smooth weak solution of \begin{align*}
    \partial_t u_{\epsilon} - \div (\varphi_{\epsilon}\ast A) \nabla u_{\epsilon} + \div((\varphi_{\epsilon}\ast v)\, u_{\epsilon}) = \varphi_{\epsilon}\ast R,
\end{align*}
in $[0,T] \times \R^n$ with $u_{\epsilon}(0,x)= \varphi_{\epsilon}\ast g (x)$ for $x \in \R^n$. Then, 
\begin{align*}
    \lim_{\epsilon \to 0} \int_{0}^T \int_{\R^n} u_{\epsilon} \, \psi \, \d x \d t =\int_{0}^T \int_{\R^n} u \, \psi \, \d x \d t, 
\end{align*}
for every $\psi \in C^{\infty}_0([0,T] \times \R^n).$\\
(ii). There exists $F \subset [0,T]$ of measure zero such that 
\begin{align*}
    \lim_{s \to 0; s \in [0,T] \setminus F} \int_{\R^n} u \, \psi \, \d x \bigg |_s = \int_{\R^n} g \, \psi \, \d x,
\end{align*}
for every bounded continuous function $\psi: \R^n \to \R$.\\
(iii). If $R,g$ are non-negative, then $u$ is non-negative. Furthermore, if $R, g$ are non-negative and $(\ln(g))^- \in \L^1_{\loc}(\R^n)$, then $u>0$ a.e. in $[0,T] \times \R^n$, $\ln u \in \L^2([0,T]; W^{1,2}_{\loc}(\R^n))$, and 
\begin{equation}
\begin{aligned}
\label{eq:strongmaxprin}
    &\int_0^T \int_{\R^n} -\ln u \, \partial_t \varphi  \, \d x \d t+ \int_0^T \int_{\R^n}A \nabla (\ln u) \cdot \nabla \varphi \, \d x \d t - \int_0^T \int_{\R^n}\varphi \, A \nabla (\ln u)  \cdot \nabla (\ln u)  \, \d x \d t\\& -\int_0^T \int_{\R^n} v \cdot \nabla \varphi  \d x \d t + \int_0^T \int_{\R^n}\varphi \, v \cdot \nabla (\ln u)  \d x \d t \geq \int_0^T \int_{\R^n} \frac{R}{u} \, \varphi \, \d x \d t,
\end{aligned}
\end{equation}
for every non-negative $\varphi \in C^{\infty}_0((0,T) \times \R^n).$ \\
(iv). The convection solution is a weak solution of $\partial_t u - \div A \nabla u +\div(v\, u)=R$ with initial data $g$.
\\
(v). If \eqref{eq:derviassum} holds, then the convection solution satisfies the following conservation of mass:
\begin{align*}
    \int_{\R^n} u \, \d x \bigg |_s = \int_{\R^n} g \, \d x + \int_0^s \int_{\R^n} R \, \d x \d t,
\end{align*}
for a.e. $s \in [0,T].$
\end{thm}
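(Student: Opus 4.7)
My plan is to build the convection solution by regularization. Uniqueness is immediate from the definition: if $u_1$ and $u_2$ are two convection solutions with the same data then $\int_0^T \int_{\R^n} (u_1-u_2)\, f \, \d x \d t = 0$ for every $f \in C^\infty_0([0,T]\times\R^n)$, because Proposition \ref{prop:dualsol} produces the adjoint solution $\phi$ making each such $f$ admissible in \eqref{eq:convecequation}; hence $u_1 = u_2$ a.e.

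For existence I work with the mollified problem in (i): classical parabolic theory yields a smooth weak solution $u_\epsilon \in \L^2([0,T];\H^1(\R^n))$. Testing the regularized equation against a smoothed $\sgn(u_\epsilon)$ gives the uniform bound $\|u_\epsilon(t,\cdot)\|_{\L^1(\R^n)} \leq \|g\|_{\L^1} + \|R\|_{\L^1}$. A Boccardo--Gallou\"{e}t-type truncation argument, testing against $T_k(u_\epsilon)\chi^2$ with $\chi \in C^\infty_0$ a spatial cutoff, produces the uniform local $\L^p([0,T];\W^{1,p}(B_R))$-bound for every $1 \leq p < n/(n-1)$. Since the equation also controls $\partial_t u_\epsilon$ in a negative Sobolev space, Aubin--Lions compactness yields strong convergence $u_\epsilon \to u$ in $\L^p_{\loc}$ along a subsequence. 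As each $u_\epsilon$ is a convection solution of its own regularized problem, the duality identity $\int u_\epsilon f = \int g_\epsilon \phi_\epsilon|_0 + \int R_\epsilon \phi_\epsilon$ can be passed to the limit: the energy and maximum-principle estimates of Proposition \ref{prop:dualsol} supply $\phi_\epsilon \to \phi$ strongly enough for both sides to converge. This produces the existence of $u$ together with property (i).

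Properties (ii), (iv), and (v) follow from targeted choices of the test data $f$. For (ii), I would take $f$ concentrated near $t=0$ and use the continuity of $\phi$ at $t=0$ together with the uniform $\L^1$-bound on $u$. For (iv), the $\W^{1,p}_{\loc}$-regularity makes $A\nabla u$ and $u\, v$ locally integrable, and choosing $f = -\partial_t\phi - \div A^*\nabla\phi - v\cdot\nabla\phi$ for $\phi \in C^\infty_0([0,T]\times\R^n)$ with $\phi(T)=0$, then integrating by parts in \eqref{eq:convecequation}, recovers \eqref{eq:weaksolconv1}. For (v), I would test against dual objects approximating $\chi_{[0,s]\times B_R}$; under assumption \eqref{eq:derviassum} the spatial-cutoff error terms vanish as $R\to\infty$ thanks to the $\L^1$-integrability of $u$ and $R$, and a limit in the time regularization yields the mass balance.

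The delicate step is (iii). Non-negativity follows from duality: if $f\geq 0$, Proposition \ref{prop:dualsol} gives $\phi \geq 0$, hence $\int u f \geq 0$. For the strict positivity and the logarithmic inequality \eqref{eq:strongmaxprin}, I would work with the smooth strictly positive approximants $u_\epsilon + \delta$, compute by chain rule the pointwise identity satisfied by $\ln(u_\epsilon+\delta)$, and derive an entropy estimate giving a uniform bound on $\ln u_\epsilon$ in $\L^2([0,T];\W^{1,2}_{\loc})$; the initial entropy is controlled by the hypothesis $(\ln g)^- \in \L^1_{\loc}$. Sending $\delta \to 0$ and then $\epsilon \to 0$, lower semicontinuity of the convex dissipation functional $w \mapsto \int \varphi\, A\nabla w \cdot \nabla w$ converts the pointwise equality into the inequality \eqref{eq:strongmaxprin}. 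The main obstacle is handling this lower semicontinuity simultaneously with the convergence of the drift term $\int \varphi\, v \cdot \nabla\ln(u_\epsilon+\delta)$ as $\delta \to 0$, which requires combining the entropy bound, the $\L^\infty$-bound on $v$, and the strong $\L^p_{\loc}$-convergence of $u_\epsilon$ to prevent concentration in the dissipation.
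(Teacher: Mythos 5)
Your overall skeleton (regularize, duality for uniqueness and non-negativity, truncation for $\W^{1,p}_{\loc}$ bounds, compactness, pass to the limit in the duality identity) matches the paper's, but there are genuine gaps. The most serious is in (iii): $u_\epsilon+\delta$ does not solve a harmless perturbation of the same equation --- it solves $\partial_t w-\div A_\epsilon\nabla w+\div(v_\epsilon\, w)=R_\epsilon+\delta\,\div v_\epsilon$, and $\div v_\epsilon$ is not uniformly controlled for merely bounded $v$, so the equation for $\ln(u_\epsilon+\delta)$ acquires the term $\delta\,\div v_\epsilon/(u_\epsilon+\delta)$, which destroys the entropy estimate. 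The paper avoids this by perturbing the \emph{data} rather than the solution: it sets $g_i=\varphi_{1/i}\ast g+\tfrac1i e^{-x^2}$ and $R_i=\varphi_{1/i}\ast R+\tfrac1i e^{-x^2}$, so that the smooth approximate solution $u_i$ is itself strictly positive by the classical strong maximum principle and $S_i=\ln u_i$ satisfies the exact entropy equation \eqref{eq:entropyequ}; the limit inequality \eqref{eq:strongmaxprin} is then obtained from \cite[Thm. 4.3]{BM} together with Fatou's lemma, which also supplies the a.e.\ convergence of $\nabla\ln u_i$ that your ``lower semicontinuity of the dissipation'' step needs but does not justify.

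Second, property (ii) is claimed for \emph{every} bounded continuous $\psi$ with a single null set $F$; ``taking $f$ concentrated near $t=0$'' gives at best convergence against compactly supported test functions, with an exceptional set depending on $\psi$. The paper instead proves equicontinuity of $t\mapsto\int_{\R^n} u_i\,\varphi\,\d x$ for $\varphi$ in a countable dense subset of $C_0^{\infty}(\R^n)$ using the $\W^{1,p}_{\loc}$ bounds and Arzel\`a--Ascoli (one null set $F$ for all test functions), and then upgrades to $C_b(\R^n)$ via the exact conservation of mass \eqref{eq:masconserv} for the smooth approximants --- which holds \emph{without} \eqref{eq:derviassum} because the mollified coefficients are smooth --- the resulting bound $\limsup_{s\to0}\int u\,\d x\,|_s\le\int g\,\d x$, and Prokhorov's theorem to rule out mass escaping to spatial infinity. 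Your proposal never establishes \eqref{eq:masconserv} for the approximants, so the $C_b$ statement is out of reach as written. A smaller but real issue: the duality identity must hold for every $f\in\L^{\infty}\cap\L^2$, not only $f\in C_0^{\infty}$; since $u_\epsilon$ is only bounded in $\L^1$ and converges in $\L^1_{\loc}$, the convergence of $\int u_\epsilon f$ requires the truncation device $f\,1_{\{u_\epsilon\le k\}}$ (or an equivalent uniform-integrability argument) used in the paper, not merely strong convergence of $\phi_\epsilon$.
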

To explain the theorem, property (i) shows that any sequence of solutions to the equations obtained by the regularization of data converges vaguely to the unique convection solution. In property (ii), the weak convergence to initial data $g$ is proved, which is stronger than the vague convergence up to a subset depending on $\psi$ that was the strongest result proven so far, see \cite[Thm. 6.6.2]{BKRS}. Property (iii) derives both the weak and strong maximum principle for convection solutions. Finally, properties (iv) and (v) demonstrate that the convection solution is a generalization of the weak solution and satisfies the conservation of mass if \eqref{eq:derviassum} holds, respectively.

The second part of this work focuses on the case of $A = I, v= \nabla V$, where $V: \R^n \to \R$ is a measurable function satisfying $e^V+e^{-V} \in \L^1_{\loc}(\R^n)$ and $e^{-V}$ is a regular weight. Hence, we do not assume any integrability condition on $\nabla V$ or decay estimates on $V$ at infinity since several interesting applications require considering such a large class of potentials. Some key examples are $V \in \L_{\loc}^{\infty}(\R^n)$ and $V(x) = C \ln|x|$ for $x \in \R^2$ where $-2<C<2$, see \cite{DLBK,S} and references therein. However, the literature on existence and uniqueness is restricted to potentials in Sobolev spaces, see \cite{JKO} and \cite[Thm. 2.2.29]{Royer}. The idea in \cite{JKO} is to use a minimization technique that requires regularity and decay assumptions on the potential. The other approach in \cite[Thm. 2.2.29]{Royer} is to show that $h= u \, e^{V}$ is the weak solution of $\partial_t h - \Delta h + \nabla V \cdot \nabla h=R \, e^{V}$ and then using regularity estimates to derive solutions. However, if $V$ is not regular enough, then the transformation idea does not work. To define a well-defined notion of solution, we again define the adjoint convection solution $\phi$ for 
$-\partial_t \phi - \Delta \phi + \nabla V \cdot \nabla \phi =f$ in $[0,T] \times \R^n$ for every $f \in C^{\infty}_0([0,T] \times \R^n)$ with $\phi(T,x)=0$ for a.e. $x \in \R^n$. It is well-known that formally $\phi$ satisfies the weighted backward parabolic equation 
\begin{align}
\label{eq:weghtbackwardequ}
    -\partial_t \phi - e^V \div (e^{-V} \nabla \phi) =f,
\end{align}
in $[0,T] \times \R^n$ with $\phi(T,x)=0$ for a.e. $x \in \R^n.$ We prove in Proposition \ref{prop:advecsol} that the equation \eqref{eq:weghtbackwardequ} has a unique solution. Now, we denote $\mathcal{M}([0,T] \times \R^n)$ as all the bounded signed Borel measures. For every $R \in \L^1([0,T] \times \R^n), g \in \L^1(\R^n)$, we define a convection solution $\mu \in \mathcal{M}([0,T] \times \R^n)$ of
\begin{align*}
    \partial_t \mu - \Delta \mu - \div(\mu \, \nabla V) = R,
\end{align*}
in $[0,T] \times \R^n$ with initial data $g$ if 
\begin{align*}
 \int_{0}^{T}\int_{\R^n} f \, \d \mu = \int_0^T \int_{\R^n} g \,\phi \, \d x \bigg |_0 + \int_{0}^T \int_{\R^n} R \, \phi \, \d x \d t,
\end{align*}
for every $f \in C^{\infty}_0([0,T] \times \R^n)$ and adjoint convection solution $\phi$ of  $-\partial_t \phi - e^V \div (e^{-V} \nabla \phi) =f$ in $[0,T] \times \R^n$ with $\phi(T,x)=0$ for a.e. $x \in \R^n$. 

Finally, we prove the following:
\begin{thm}\label{thm:potentialdrift}
    Let $g \in \L^1(\R^n)$, $R \in \L^1([0,T] \times \R^n)$. Then, there exists a unique convection solution $\mu \in \mathcal{M}([0,T] \times \R^n)$ of
 \begin{align*}
     \partial_t \mu -\Delta \mu- \div(\mu \nabla V) = R,
 \end{align*}
in $[0,T] \times \R^n$ with initial data $g$, where
\begin{align*}
    |\mu|([0,T] \times \R^n) \leq T \int_{\R^n} |g| \, \d x +T \int_{0}^T \int_{\R^n} |R| \, \d x \d t.
\end{align*}
Furthermore, $\mu$ satisfies the following:\\
(i). Let $\varphi_{\epsilon}$ be a positive smooth approximation of the identity and $V_{\epsilon} \in C^{0,1}(\R^n) \cap C^{\infty}(\R^n)$ be chosen such that $e^{V_{\epsilon}},e^{-V_{\epsilon}}$ converge to $e^V,e^{-V}$ in $\L^1_{\loc}(\R^n)$, respectively (see Lemma \ref{lem:approxpoten} for the existence of such a sequence). Assume that $\mu_{\epsilon} \in \L^2([0,T]; \H^1(\R^n))$ is the smooth weak solution of 
\begin{align*}
    \partial_t \mu_{\epsilon} - \Delta \mu_{\epsilon} - \div( \mu_{\epsilon} \nabla V_{\epsilon}) = \varphi_{\epsilon} \ast R,
\end{align*}
in $[0,T] \times \R^n$ with $\mu_{\epsilon}(0,x)=\varphi_{\epsilon} \ast g(x)$ for every $x \in \R^n.$ Then, 
\begin{align*}
    \lim_{\epsilon \to 0} \int_{0}^T \int_{\R^n} \psi \, \mu_{\epsilon} \, \d x \d t =\int_{0}^T \int_{\R^n}  \psi \, \d \mu, 
\end{align*}
for every $\psi \in C^{\infty}_0([0,T] \times \R^n).$\\\\
(ii). If $R,g$ are non-negative, then $\mu$ is a positive measure and 
\begin{align*}
 \liminf_{r \to 0} \frac{\mu((s-r,s+r) \times B(x,r))}{|(s-r,s+r) \times B(x,r)|}  \geq (\inf  g\, e^V  + s \inf R\, e^V)  e^{-V(x)},
\end{align*}
for every $s \in (0,T)$ and every Lebesgue point $x \in \R^n$ of $e^{-V}.$ 
\\\\
(iii). If \begin{align*}
 \int_{\R^n} |g|^{q'} e^{\frac{q'}{q}V} \, \d x   + \int_0^T \biggl(\int_{\R^n} |R|^{p'} e^{\frac{p'}{p}V} \, \d x \biggr)^{\frac{1}{p'}}\d t < \infty,
\end{align*}
for some $p>1,q >1$, then $\mu \in \L^1([0,T] \times \R^n)$.\\\\
(iv). If
\begin{align}\label{eq:boundednes}
   \| g \, e^{V}\|_{\L^{\infty}} + \|R \, e^{V} \|_{\L^{\infty}} < \infty,
\end{align}
then $\mu \in \L^1([0,T]\times \R^n)$ and
\begin{align*}
    \| \mu \, e^{V} \|_{\L^{\infty}} \leq  \| g\, e^V \, \|_{\L^{\infty}} + T\,
    \|R \, e^V \|_{\L^{\infty}}. 
\end{align*}
\\\\
(v).  If $e^{-V} \in \L^1(\R^n)$, then the  signed measure $\nu(I) := \mu(I \times \R^n)$, for Borel subsets $I \subset [0,T]$, is absolutely continuous with respect to Lebesgue's measure $\mathcal{L}^1$ on $[0,T]$, and $\mu$ satisfies the following conservation of mass:
\begin{align*}
    \frac{\d \nu}{\d \mathcal{L}^1}(s) =  \int_{\R^n} g \, \d x + \int_0^s \int_{\R^n} R \, \d x \d t,
\end{align*}
for a.e. $s \in [0,T].$\\
(vi). For every $f \in C^{\infty}_0(\R^n)$, the following convergence to the initial data holds:
\begin{align*}
         \lim_{(h,s) \to 0}  \frac{1}{h} \int_s^{s+h} \int_{\R^n} f \, d \mu =  \int_{\R^n} f\, g \, \d x  .
\end{align*}
where in the limit we let $(h,s)$ goes to zero in the domain $0<h+s <T$.
\end{thm}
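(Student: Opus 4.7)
The plan is to use the unique adjoint convection solution from Proposition \ref{prop:advecsol} to construct $\mu$ via Riesz representation, and then to read off the listed properties from the analogous properties of a smooth regularization. For each $f\in C^\infty_0([0,T]\times\R^n)$, let $\phi$ denote the adjoint convection solution of $-\partial_t\phi-e^V\div(e^{-V}\nabla\phi)=f$ with $\phi(T,\cdot)=0$, and observe that the weighted parabolic maximum principle (which I expect to be part of Proposition \ref{prop:advecsol}) gives $\|\phi\|_{\L^\infty}\leq T\|f\|_{\L^\infty}$. The functional $\Lambda(f):=\int_{\R^n} g\,\phi(0,\cdot)\,\d x+\int_0^T\!\!\int_{\R^n} R\,\phi\,\d x\,\d t$ is linear on $C^\infty_0$ and satisfies $|\Lambda(f)|\leq T(\|g\|_{\L^1}+\|R\|_{\L^1})\|f\|_{\L^\infty}$, so it extends continuously to $C_0([0,T]\times\R^n)$. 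The Riesz--Markov--Kakutani theorem then produces a unique bounded signed Borel measure $\mu$ with $\Lambda(f)=\int f\,\d\mu$ and the required total variation bound; by construction $\mu$ is a convection solution, and uniqueness is immediate from the density of $C^\infty_0$.

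For (i), given $\psi\in C^\infty_0$ let $\phi_\epsilon$ be the classical solution of the regularized adjoint $-\partial_t\phi_\epsilon-\Delta\phi_\epsilon+\nabla V_\epsilon\cdot\nabla\phi_\epsilon=\psi$ with $\phi_\epsilon(T,\cdot)=0$; integration by parts against the smooth $\mu_\epsilon$ gives
\begin{align*}
\int_0^T\!\!\int_{\R^n}\psi\,\mu_\epsilon\,\d x\,\d t=\int_{\R^n}(\varphi_\epsilon\ast g)\,\phi_\epsilon(0,\cdot)\,\d x+\int_0^T\!\!\int_{\R^n}(\varphi_\epsilon\ast R)\,\phi_\epsilon\,\d x\,\d t,
\end{align*}
and $\|\phi_\epsilon\|_{\L^\infty}\leq T\|\psi\|_{\L^\infty}$ uniformly. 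Combining the $\L^1_{\loc}$ convergence $e^{\pm V_\epsilon}\to e^{\pm V}$ with the weighted $\H^1_{e^{-V_\epsilon}}$-energy estimate from Proposition \ref{prop:advecsol}, I extract a weak-$\ast$ limit of $\phi_\epsilon$ that coincides with $\phi$; the $\L^1$ convergence of the data then lets me pass to the limit and identify the right-hand side with $\int\int\psi\,\d\mu$. Properties (iii) and (iv) both follow from this duality identity: (iv) comes from applying the weighted maximum principle to $h_\epsilon:=\mu_\epsilon e^{V_\epsilon}$, which solves the forward weighted equation $\partial_t h_\epsilon-e^{V_\epsilon}\div(e^{-V_\epsilon}\nabla h_\epsilon)=(\varphi_\epsilon\ast R)e^{V_\epsilon}$ and is therefore bounded by $\|g e^V\|_{\L^\infty}+T\|R e^V\|_{\L^\infty}$; (iii) follows from H\"older's inequality with the weight $e^V$ to split $\int|g||\phi(0,\cdot)|$ and $\int\int|R||\phi|$, controlled via the weighted $\L^q$ estimates on $\phi$.

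Property (ii) is proved along the same approximation lines: $h_\epsilon\geq 0$ by the weighted maximum principle when $g,R\geq 0$, hence $\mu_\epsilon\geq 0$ and (i) transfers positivity to $\mu$. The quantitative lower bound comes from comparing $h_\epsilon$ with the constant subsolution $\inf(\varphi_\epsilon\ast g\cdot e^{V_\epsilon})+s\inf(\varphi_\epsilon\ast R\cdot e^{V_\epsilon})$, multiplying by $e^{-V_\epsilon}$, integrating over the cylinder $(s-r,s+r)\times B(x,r)$, passing to the limit $\epsilon\to 0$ via (i), and finally invoking the Lebesgue differentiation theorem at points where $e^{-V}$ is Lebesgue. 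For (v), the hypothesis $e^{-V}\in\L^1(\R^n)$ makes constants admissible in the weighted energy space, so for any $\eta\in C^\infty([0,T])$ with $\eta(T)=0$ the field $\phi(t,x):=\eta(t)$ is a valid adjoint convection solution with source $-\eta'(t)$; inserting it into the definition of $\mu$ gives
\begin{align*}
-\int_0^T\eta'(t)\,\d\nu(t)=\eta(0)\int_{\R^n} g\,\d x+\int_0^T\eta(t)\int_{\R^n} R(t,\cdot)\,\d x\,\d t,
\end{align*}
and varying $\eta$ identifies $\nu$ as absolutely continuous with the stated density. For (vi) I fix $f\in C^\infty_0(\R^n)$ and test $\mu$ against the adjoint solution whose source is $F(t,x):=h^{-1}\mathbf{1}_{[s,s+h]}(t)f(x)$; the $\L^\infty$ bound $\|\phi_F\|_{\L^\infty}\leq\|f\|_{\L^\infty}$, the support of $\phi_F$ in $[0,s+h]$, and the fact that $\phi_F(0,\cdot)\to f$ in $\L^1_{\loc}$ as $(h,s)\to 0$ then deliver the claimed convergence to the initial data.

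The main obstacle is the stability argument underlying (i). Because $V$ is only measurable with $e^{\pm V}\in\L^1_{\loc}$, the gradients $\nabla V_\epsilon$ carry no uniform bound and classical parabolic compactness is unavailable; the route I intend to take is to rely exclusively on the weighted energy estimates of Proposition \ref{prop:advecsol}, which should be stable under $\L^1_{\loc}$ perturbations of the weight, and to pass to the limit only in the duality pairing against $\L^1$-convergent data.
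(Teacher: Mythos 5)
Your construction of $\mu$ via the Riesz--Markov representation of the functional $\Lambda(f)=\int g\,\phi(0,\cdot)+\int\!\!\int R\,\phi$ is a legitimate and arguably cleaner alternative to the paper's route (the paper instead extracts a vague limit of the regularized solutions $\mu_\epsilon$ by weak-$\ast$ compactness and only then identifies it through the convergence of the adjoint solutions); both give the total variation bound, and your treatment of (i) and (iii) is consistent with the paper's. However, there is a genuine gap in your argument for (iv), and a smaller one in (ii). You claim that $h_\epsilon:=\mu_\epsilon e^{V_\epsilon}$ is bounded by $\|g e^V\|_{\L^\infty}+T\|Re^V\|_{\L^\infty}$ via the maximum principle, but the initial datum of $h_\epsilon$ is $(\varphi_\epsilon\ast g)\,e^{V_\epsilon}$, and $\sup\,(\varphi_\epsilon\ast g)\,e^{V_\epsilon}$ is \emph{not} controlled by $\sup\, g\,e^{V}$: writing $g\leq\|ge^V\|_{\L^\infty}e^{-V}$ one is left needing $(\varphi_\epsilon\ast e^{-V})\,e^{\varphi_\epsilon\ast V}\leq C$, which is a reverse Jensen ($A_1$-type) inequality that fails for general $V$ with only $e^{\pm V}\in\L^1_{\loc}$. (For the lower bound in (ii) Jensen goes the right way, $e^{-\varphi_\epsilon\ast V}\leq\varphi_\epsilon\ast e^{-V}$, so that half is salvageable, but you do not note the asymmetry.) The paper avoids regularizing at all here: it proves (iv) and the lower bound in (ii) directly on the limit measure by pairing with adjoint solutions whose sources approximate normalized indicators of cylinders, invoking the weighted $\L^1$ estimate $\sup_t\int|\phi|\,\d w\leq\int\!\!\int|f|\,\d w$ of Lemma \ref{lem:upperL1bound} and then the Radon--Nikodym and Lebesgue differentiation theorems. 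You would need to replace your comparison argument by something of this kind.

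Two further points need repair. In (v) and (vi) you pair $\mu$ with adjoint solutions whose sources are not in $C^\infty_0([0,T]\times\R^n)$ (a function of $t$ alone in (v), an indicator in time in (vi)); the definition of a convection solution only quantifies over compactly supported smooth $f$, so each such use requires an approximation argument showing both that $\int\!\!\int f_j\,\d\mu\to\int\!\!\int f\,\d\mu$ (here finiteness of $\mu$ helps) and that the corresponding adjoint solutions $\phi_j$ converge to the claimed limit boundedly a.e. so that the right-hand side passes to the limit against $g\in\L^1$, $R\in\L^1$; this is exactly what the paper's tightness/Prokhorov step and the cutoff solutions $\phi^{B}$ accomplish. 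Finally, in (vi) the key assertion $\phi_F(0,\cdot)\to f$ in $\L^1_{\loc}$ as $(h,s)\to 0$ is stated without proof; with $V$ only measurable this is not routine, and the paper devotes Lemma \ref{lem:timederiv} (uniform control and convergence of $\partial_t\phi$, with $\partial_t\phi(T,\cdot)=-f$ and $\|\partial_t\phi\|_{\L^\infty}\leq\|f\|_{\L^\infty}$) precisely to establishing it. As written, your proposal is an attractive reorganization of the existence part but does not close (iv), and leaves (ii), (v), (vi) resting on unproved limiting claims.
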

To elaborate on the result, part (i) proves that any good regularization on the data, see Lemma \ref{lem:approxpoten}, gives a sequence of solutions that converges vaguely to the convection solution. Parts (ii) and (iii) are a generalized form of the strong maximum principle and a sufficient condition for having $\L^1$-bounded solutions, respectively. In part (iv), we prove an upper bound estimate for the convection solution if \eqref{eq:boundednes} holds. Then, with the assumption $e^{-V} \in \L^1(\R^n)$, we demonstrate the conservation of mass. Finally, the last part proves the convergence to the initial data in the vague sense.  

We briefly discuss the new techniques in the proofs for Theorem \ref{thm:boundeddrift} and Theorem \ref{thm:potentialdrift}. For the first one, we apply the argument in \cite[Lem. 3.4]{P2}, together with using regularization, to derive the unique adjoint solution for $-\partial_t \phi -\div A^* \nabla \phi - v \cdot \nabla \phi = f$ for bounded and square-integrable functions $f$ in $\R^n.$ We then prove the conservation of mass for regular solutions and use it to derive a sequence of functions converging vaguely to the solution $u$ and prove continuous convergence to initial data. To prove the strong maximum principle, however, a nonlinear argument is applied. We first use the linearity and perturbation to consider the strictly positive approximating solutions $u_i$, and then extract the properties of the equation for $\ln(u_i)$. This approach is new and uses a property of nonlinear parabolic equations to prove the strong maximum principle. For the second result, we need to use new arguments for all the steps. First, the notion of the adjoint convection solutions is defined on a suitable weighted Sobolev space. Then, uniqueness is followed by a test function argument, and existence is obtained by approximating solutions and their regularity estimates, see Proposition \ref{prop:advecsol}. Then, again using the conservation of mass for the approximating smooth solutions, we derive the convection solution as a vague limit. For the upper and lower bounds, we need to use a weighted $\L^1$-estimate on the adjoint convection solution, see Lemma \ref{lem:upperL1bound}, together with the Radon-Nikodym theorem. In the next part, if $e^{-V} \in \L^1(\R^n)$, then, by Prokhorov's theorem, we derive the weak convergence of approximating solutions to convection solutions. This derives the conservation of mass for the convection solutions. Finally, we use properties of the time derivative of adjoint convection solutions, see Lemma \ref{lem:timederiv}, to obtain the continuous convergence to the initial data.

\section{Acknowledgement}
I want to thank Kaj Nyström for his encouragement for this work. Moreover, I thank Benny Avelin for motivating discussions and comments on this paper. The author also acknowledges a communication with Pierre-Louis Lions and appreciates his comments.

\section{Preliminaries and notations}
In the whole note, $n$ is a positive integer number. The set $B(x,r)$, for $x \in \R^n, r>0$, denotes the ball of radius $r$ centered at $x$. We define the dilation $\epsilon\, B(x,r) = B(x,\epsilon\, r)$ for every $\epsilon>0,r>0,x \in \R^n.$ The Hölder dual of a real $p>1$ is denoted by $p':= \frac{p}{p-1}.$

\subsection{Spaces and measures} \label{sec:spaceandmeas}
Let $T>0$ and $(\mathcal{B},\|\,\|)$ be a normed vector space. For every $F \subset [0,T], t \in [0,T]$, and $f: [0,T] \to \R$, we say that 
\begin{align*}
    \limsup_{s \to t;s \in F} f(s)=g,
\end{align*}
for $g \in \R$ if $$\limsup_{n \to \infty} f(s_n)=g,$$ for every sequence $s_n \in F$ such that $\lim_{n \to \infty} s_n = t.$ Likewise, we define $ \liminf_{s \to t;s \in F}$ and $\lim_{s \to t;s \in F}.$ A function $f: [0,T] \to \mathcal{B}$ is measurable if $f^{-1}(U)$ is Lebesgue measurable for every open subset $U \subset \mathcal{B}.$  Let $p,q$ be positive real numbers. Then, $\L^p([0,T];\mathcal{B})$ denotes the set of all the measurable functions $f: [0,T] \to \mathcal{B}$ such that 
\begin{align*}
    \|f\|^p_{\L^p([0,T];\mathcal{B})} := \int_{0}^T  ||f(t)||^p \, \d t < \infty,
\end{align*}
and $\L^{\infty}([0,T];\mathcal{B})$ includes all the measurable functions $g: [0,T] \to \mathcal{B}$ such that
\begin{align*}
    \|g\|_{\L^{\infty}([0,T];\mathcal{B})} := \sup_{t \in [0,T]} \|g(t)\| < \infty. 
\end{align*} 
For every $f:[0,T] \to \mathcal{B}$ and $h >0$, we define the shift operator $\tau_h f : [-h,T-h] \to \mathcal{B}$ by $\tau_h f(t) =f(t+h)$ for every $t \in [-h,T-h].$ The space $\L^q([0,T];W_{\loc}^{k,p}(\R^n))$ includes all the measurable real valued functions $f: [0,T] \times \R^n \to \R$ satisfying $$\int_{0}^T\biggl(\sum_{i=0}^k\int_{B} |\nabla^k f|^p\biggr)^{\frac{q}{p}} \, \d x \d t  < \infty,$$ for every ball $B \subset \R^n$. We denote $C([0,T];\mathcal{B})$
 as the set of functions $f: [0,T] \to \mathcal{B}$ which are continuous, and $C([0,T]; \L^p_{\loc}(\R^n))$ is made of measurable functions $f: [0,T] \to \L^p_{\loc}(\R^n)$ such that $f \in C([0,T]; \L^p(B))$ for every ball $B \subset \R^n.$
 
 We define $A_2(\R^n)$ as all the positive functions $w \in \L^1_{\loc}(\R^n)$ such that for a constant $C>0$ we have
\begin{align}
  \label{eq:A2cond}
    \frac{\int_{B} w \, \d x \int_{B} w^{-1} \, \d x}{|B|^2} \leq C,
\end{align}
 for every ball $B \subset \R^n$. The smallest constant $C$ in \eqref{eq:A2cond} is denoted by $[w]_{A_2}$. 
 Now, let $w, w^{-1} \in \L^1_{\loc}(\R^n).$ We define the measures $\d w(x):= w(x) \, \d x, \d w^{-1}(x) :=w^{-1}(x) \, \d x.$ For every Borel subset $B \subset \R^n$, define $w(B) := \int_{B} \, \d w, w^{-1}(B) := \int_{B} \, \d w^{-1}.$ The space $\L^2_w(\R^n)$ is defined by functions $f \in \L^1_{\loc}(\R^n)$ such that $$\|f\| ^2_{\L^2_w(\R^n)} := \int_{\R^n} f^2 \, \d w < \infty,$$
 and $\L^2_{w,\loc}(\R^n)$ includes all measurable $f:\R^n \to \R$ such that $\int_{B} f^2 \, \d w < \infty$ for every ball $B \subset \R^n.$ The Sobolev space $\H^1_w(\R^n)$ is defined by all the functions $f \in \L^2_w(\R^n)$ such that $\int_{\R^n} |\nabla f|^2 \, \d w< \infty$ and for a sequence $f_i \in C^{\infty}_0(\R^n)$, we have 
 \begin{align*}
    \lim_{i \to \infty} \int_{\R^n} |f_i-f|^2 \, \d w + \int_{\R^n} |\nabla (f_i-f)|^2 \, \d w=0.
 \end{align*}
 Note that by Cauchy-Schwarz inequality, we have 
 \begin{align*}
 \biggl(\int_{B} |\nabla f| \, \d x \biggr)^2 \leq w^{-1}(B)    \int_{B} |\nabla f|^2 \, \d w,
 \end{align*} for every $f \in \H^1_w(\R^n)$ and ball $B \subset \R^n$. Hence, $\H^1_w(\R^n) \subset W^{1,1}_{\loc}(\R^n)$. In the case that $w(x) = 1$, we denote $\H^1_w(\R^n)$ by $\H^1(\R^n)$.
The norm $\|\, \|_{\H^1_w(\R^n)}$ is defined by 
\begin{align*}
  \|f\|^2_{\H^1_w(\R^n)} :=  \int_{\R^n} f^2 \, \d w + \int_{\R^n} |\nabla f|^2 \, \d w,
\end{align*}
for every $f \in \L^1_{\loc}(\R^n).$ The space $W^{1,2}_w(\R^n)$ includes functions $f \in \L^1_{\loc}(\R^n)$ such that $\|f\|_{\H^1_w(\R^n)}<\infty.$ %We refer \cite[Ch. 15]{HKM} for more properties of Muckenhoupt weights.
Note that, unlike the classical Sobolev space, it is possible for special weights $\H^1_w(\R^n) \neq W^{1,2}_w(\R^n),$ see \cite{CS, Z}. We say that $w$ is a regular weight if $\H^1_w(\R^n) = W^{1,2}_w(\R^n).$ Let $U \subset \R^m$ be a Lebesgue measurable subset for a positive integer $m$. We denote the set of all the bounded and continuous real-valued functions on $U$ by $C_b(U)$. The space $C_0(U)$ includes all the functions in $C_b(U)$ with compact support. We represent the space of all bounded signed Borel measures on $U$ by $\mathcal{M} (U)$. By the Jordan decomposition theorem, for every $\mu \in \mathcal{M} (U)$ there exists two unique bounded positive Borel measures $\mu^+, \mu^-$ such that $\mu = \mu^+ - \mu^-.$ We define the Borel measure $|\mu|:= \mu^+ + \mu^-$. If $\mu, \nu \in \mathcal{M}(U)$ and $\mu$ is absolutely continuous with respect to $\nu$, then we denote $\frac{d \mu}{\d \nu}: U \to \R$ as the bounded $\nu$-measurable function satisfying 
\begin{align*}
    \mu(F) = \int_{F } \frac{d \mu}{\d \nu} \, \d \nu,
\end{align*}
for every Borel subset $F \subset U.$

\subsection{Coefficient}
We assume that the matrix valued function $A = (a_{ij})_{i,j=1}^n$ is with real measurable entries $A_{ij}: [0,T] \times \R^n \to \R$ which, for some positive $\lambda,\Lambda$, satisfy the elliptic conditions 
\begin{align}
\label{eq:ellipticcond}
    |A(t,x) \chi \cdot \xi| \leq \Lambda |\chi| \, |\xi|, \quad A(t,x) \chi \cdot \chi \geq \lambda  |\chi|^2,
\end{align}
for all $\chi,\xi \in \R^n$ and a.e. $(t,x) \in [0,T] \times \R^n.$ The matrix valued function $A^*$ is the transpose of $A.$

\subsection{Weak and vague convergence of measures}
We apply two main convergences for finite signed Borel measures here. First, we say that $\mu_i \in \mathcal{M}([0,T] \times \R^n)$ is weakly convergent to $\mu \in \mathcal{M}([0,T] \times \R^n)$ if 
\begin{align*}
    \lim_{i \to \infty} \int_0^T \int_{\R^n} f \, \d \mu_i =\int_0^T \int_{\R^n} f \, \d \mu,
\end{align*}
for every $f \in C_b([0,T] \times \R^n).$ Second, we say that $\mu_i \in \mathcal{M}([0,T] \times \R^n)$ converges vaguely to $\mu \in \mathcal{M}([0,T] \times \R^n)$ if
\begin{align*}
    \lim_{i \to \infty} \int_0^T \int_{\R^n} f \, \d \mu_i =\int_0^T \int_{\R^n} f \, \d \mu,
\end{align*}
for every $f \in C_0^{\infty}([0,T] \times \R^n).$ Note that every weak convergence is a vague convergence, but the reverse may not hold.

\subsection{Truncation function}
For every measurable function $u: \R^n \to \R$ and $k>0$, define 
\begin{align*}
    T_k(u)(x) :=  \begin{cases}
        u(x) \quad  &|u(x)| \leq k,\\
        k \sgn(u(x)) \quad & |u(x)|> k.
    \end{cases}
\end{align*}
Then, by a standard result if $u \in W^{1,p}_{\loc}(\R^n)$, for a $p \geq 1$ , then $T_k(u) \in W^{1,p}_{\loc}(\R^n)$ and $\|\nabla T_k(u)\|_{\L^p(U)} \leq \|\nabla u\|_{\L^p(U)} $ for every $U \subset \R^n$. For $p>1$, the space $\mathcal{T}^p(\R^n)$ includes all the measurable functions $f: \R^n \to \R$ such that $T_k f \in W^{1,p}_{\loc}(\R^n)$ for every $k>0$.
We refer to \cite{BBGGPV} for the properties of the spaces $\mathcal{T}^p(\R^n).$

\section{Existence for integrable drifts}
In this section, we prove Theorem \ref{thm:boundeddrift}. We divide the proof into several steps.

Let $T>0, v \in \L^{\infty}([0,T]\times \R^n,\R^n)$. To define the notion of a solution, we need the notion of the dual solution.

\begin{defn} \label{def:dualsol}
Let $f \in \L^{\infty}([0,T] \times \R^n) \cap \L^{2}([0,T] \times \R^n)$. We say that $\phi$ is an adjoint convection solution of 
\begin{align}
\label{eq:dualsol}
    -\partial_t \phi - \div A^* \nabla \phi - v \cdot \nabla \phi = f,
\end{align}
in $[0,T] \times \R^n$ if $ \phi \in \L^2([0,T]; \H^1(\R^n)) \cap C([0,T]; \L^2(\R^n))$, $\phi(T,x)=0$ for a.e. $x \in \R^n$, and 
\begin{align*}
    &\int_{\R^n} \phi \, \psi\, \d x \,\bigg|_{0} + \int_0^T \int_{\R^n} \phi\, \partial_t \psi  \, \d x \d t + \int_0^T \int_{\R^n} A^* \nabla \phi \cdot  \nabla \psi  \, \d x \d t \\& -\int_0^T \int_{\R^n} \psi\,  v \cdot \nabla \phi  \, \d x \d t  = \int_{\R^n} f\, \phi \, \d x \d t,
\end{align*}
for every $\psi \in C^{\infty}_0([0,T] \times \R^n).$ 

\end{defn}
\begin{prop}
\label{prop:dualsol}
   For every $f \in \L^{\infty}([0,T]\times \R^n) \cap \L^2([0,T] \times \R^n)$, there exists a unique adjoint convection solution $\phi$ for \eqref{eq:dualsol} which satisfies
   \begin{align*}
   (T-t) \inf f \leq \phi(t,x) &\leq (T-t) \sup f ,\\
    \| \phi \|_{\L^{2}([0,T];\H^1 (\R^n))} &\leq  \frac{2}{\lambda} e^{(2+ \|v\|^2_{\L^{\infty}}/\lambda)T} \|f\|_{\L^{2}([0,T] \times \R^n)},
   \end{align*}
   for a.e. $(t,x) \in [0,T] \times \R^n.$ Moreover, for every pointwise approximation of $f,v,A$ by uniformly bounded sequences $f_i \in  \L^2([0,T] \times \R^n) \cap \L^{\infty}([0,T] \times \R^n), v_i \in \L^{\infty}([0,T] \times \R^n,\R^n),$ and $A_i \in \L^{\infty}([0,T] \times \R^n,\R^n \times \R^n)$, where $A_i$ satisfies \eqref{eq:ellipticcond} and $$\lim_{i \to \infty} \|f_i-f\|_{\L^2([0,T] \times \R^n)}=0,$$ there exists a sequence of functions $\phi_i \in \L^2([0,T]; \H^1(\R^n)) \cap C([0,T]; \L^2(\R^n))$ satisfying
    \begin{align*}
        -\partial_t \phi_i - \div A_i^* \nabla \phi_i &-v_i \cdot \nabla \phi_i=f_i,
    \end{align*}
in $[0,T] \times \R^n$ with $\phi_i(T,x)=0$ for a.e. $x \in \R^n$, where $\phi_i$ converges to $\phi$ in $C([0,T];\L^1_{\loc}(\R^n))$ and $\L^2([0,T];\H^1_{\loc}(\R^n))$.
    
\end{prop}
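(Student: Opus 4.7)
My plan is to reverse time and regularize: setting $\tilde\phi(s,x)=\phi(T-s,x)$, the equation becomes a forward parabolic equation $\partial_s \tilde\phi - \div \tilde A \nabla \tilde\phi - \tilde v \cdot \nabla \tilde\phi = \tilde f$ with zero initial data, where $\tilde A(s,x)=A^*(T-s,x)$, $\tilde v(s,x)=v(T-s,x)$. Working with any pointwise approximation $A_i,v_i,f_i$ satisfying \eqref{eq:ellipticcond} uniformly (e.g.\ a mollification), the forward problem with smooth coefficients and $L^2\cap L^\infty$ source admits a classical solution $\phi_i\in L^2([0,T];\H^1(\R^n))\cap C([0,T];\L^2(\R^n))$ by standard parabolic theory (Galerkin or semigroup methods). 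The strategy is then to derive uniform $L^\infty$ and energy bounds on $\phi_i$, extract a limit, and verify that the limit solves \eqref{eq:dualsol}.

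For the pointwise bound, I would apply the maximum principle to $w_i:=\phi_i-(T-t)\sup f$: it solves the same adjoint equation with source $f_i-\sup f$, which is $\le 0$ in the limit and can be made uniformly $\le \epsilon_i\to 0$ after mollification, and satisfies $w_i(T,\cdot)=0$. After time-reversal this is a forward parabolic inequality with zero initial data, and the standard maximum principle yields $w_i\le o(1)$, passing to the limit gives $\phi\le (T-t)\sup f$; the lower bound is symmetric. For the energy estimate, I would multiply the equation for $\phi_i$ by $\phi_i$ and integrate over $[t,T]\times\R^n$. The time-derivative term yields $\tfrac12\|\phi_i(t)\|_{\L^2}^2$ (using $\phi_i(T)=0$), the diffusion term is bounded below by $\lambda\int_t^T\|\nabla\phi_i\|_{\L^2}^2$ using \eqref{eq:ellipticcond}, and the drift term is controlled via Young's inequality by $\tfrac{\lambda}{2}\int_t^T\|\nabla\phi_i\|_{\L^2}^2 + \tfrac{\|v\|_\infty^2}{2\lambda}\int_t^T\|\phi_i\|_{\L^2}^2$. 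A Grönwall argument in the backward time variable then gives the claimed bound with the explicit constant $\tfrac{2}{\lambda}e^{(2+\|v\|_\infty^2/\lambda)T}$.

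To pass to the limit and establish strong convergence in $C([0,T];\L^1_{\loc})$ and $L^2([0,T];\H^1_{\loc}(\R^n))$, I would prove that $\{\phi_i\}$ is Cauchy in these spaces. The difference $w_{ij}=\phi_i-\phi_j$ satisfies the adjoint equation with coefficients $(A_i,v_i)$ and source
\[
f_i-f_j+\div((A_j^*-A_i^*)\nabla\phi_j)+(v_i-v_j)\cdot\nabla\phi_j,
\]
and terminal data $0$. Running the same energy estimate on $w_{ij}$ reduces matters to showing that the right-hand side is small: $\|f_i-f_j\|_{\L^2}\to 0$ by assumption, while $(A_i-A_j)\nabla\phi_j,(v_i-v_j)\nabla\phi_j$ tend to zero in $L^2_{\loc}$ by dominated convergence, using the uniform $L^2$ bound on $\nabla\phi_j$ from the first energy estimate and the pointwise convergence with uniform $L^\infty$ bound on the coefficients. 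This gives uniform-in-$t$ convergence in $\L^2$, and combined with the uniform $\L^\infty$ bound yields convergence in $C([0,T];\L^1_{\loc})$; the $L^2(\H^1_{\loc})$ convergence comes from the dissipation term. Uniqueness is immediate: two adjoint convection solutions with the same source differ by a solution with $f=0$ and zero terminal value, and the energy identity forces this difference to vanish.

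The main obstacle I anticipate is the interplay between the low regularity of the coefficients and the strong $L^2(\H^1_{\loc})$ convergence: one cannot simply pass the divergence form term to the limit from weak convergence alone, so the Cauchy argument above (where the error terms $(A_i-A_j)\nabla\phi_j$ must be handled carefully) is essential. A secondary subtlety is ensuring that the maximum principle applies at the level of the mollified equation rigorously—since for smooth coefficients this is classical, but the uniform estimates must be preserved through the limit $\epsilon\to 0$, which is where working via the approximation in Lemma~3.4 of \cite{P2} (with Steklov averaging or similar) ensures the needed integration-by-parts identities hold with no stray boundary terms.
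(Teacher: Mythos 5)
Your overall architecture (regularize the data, solve the smooth problems, maximum principle for the pointwise bound, energy estimate plus Gr\"onwall for the $\H^1$ bound, mollify/Steklov-average to justify the energy identity for uniqueness) matches the paper's. But the step on which the whole convergence claim rests --- the Cauchy argument for $w_{ij}=\phi_i-\phi_j$ --- has a genuine gap. The error terms in your source, $(A_i^*-A_j^*)\nabla\phi_j$ and $(v_i-v_j)\cdot\nabla\phi_j$, are products of a uniformly bounded, pointwise-vanishing coefficient difference with the \emph{$j$-dependent} gradient $\nabla\phi_j$, which at this stage is only known to be bounded in $\L^2$. Dominated convergence does not apply to such a product: you would need $\int |A_i-A_j|^2|\nabla\phi_j|^2\to 0$, and boundedness of $\|\nabla\phi_j\|_{\L^2}$ gives no equi-integrability of $|\nabla\phi_j|^2$, so mass of $|\nabla\phi_j|^2$ can concentrate exactly where $A_i-A_j$ has not yet become small. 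Strong $\L^2$ convergence of $\nabla\phi_j$ would fix this, but that is precisely the conclusion you are trying to establish --- the argument is circular as written.

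The paper's route avoids this: it first uses the uniform bound on $\phi_i$ in $\L^2([0,T];\H^1(\R^n))$ together with the bound on $\partial_t\phi_i$ in $\L^2([0,T];\H^{-1})+\L^1([0,T];\L^1_{\loc})$ and the Aubin--Lions/Simon compactness theorem to extract a limit $\phi$ with $\phi_i\to\phi$ strongly in $\L^2([0,T];\L^2_{\loc})$ and $\nabla\phi_i\rightharpoonup\nabla\phi$ weakly, identifies the limit equation, and only \emph{then} runs the localized energy comparison between $\phi_i$ and the fixed limit $\phi$. In that comparison the coefficient errors appear as $(A_i^*-A^*)\nabla\phi$ and $(v_i-v)\cdot\nabla\phi$ multiplying the \emph{fixed} function $\nabla\phi\in\L^2$, so dominated convergence is legitimate, and the remaining drift term $\varphi^2(\phi_i-\phi)v_i\cdot\nabla(\phi_i-\phi)$ is killed by Cauchy--Schwarz using the already-established strong $\L^2_{\loc}$ convergence of $\phi_i-\phi$ against the bounded gradient. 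You should insert this compactness step (which you never invoke) and replace the $i,j$-Cauchy comparison by a comparison against the limit; with that change the rest of your outline (including the tail estimate needed to upgrade $C([0,T];\L^2_{\loc})$ to $C([0,T];\L^2(\R^n))$, which you also do not address) can be completed along the paper's lines. A minor further point: take the comparison function $(T-t)\sup f_i$ rather than $(T-t)\sup f$ in the maximum-principle step, which removes the need for the $\epsilon_i$ bookkeeping.
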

\begin{proof}
 To prove uniqueness, let $\phi_1,\phi_2 \in \L^2([0,T]; \H^1(\R^n)) \cap C([0,T];\L^2(\R^n))$ be two adjoint convection solutions of \eqref{eq:dualsol}. Then, $\phi :=\phi_1 - \phi_2 \in \L^2([0,T]; \H^1(\R^n)) \cap C([0,T];\L^2(\R^n))$ is an adjoint convection solution of \begin{align*}
     -\partial_t \phi - \div A^* \nabla \phi - v \cdot \nabla \phi = 0,
\end{align*} in $[0,T] \times \R^n$ with $\phi(T,x)=0$ for a.e. $x \in \R^n.$ Let $\varphi_{\epsilon}$ be a non-negative approximation of identity in $[0,T] \times \R^n$. Then,
$$-\partial_t \varphi_{\epsilon} \ast \phi - \div \varphi_{\epsilon} \ast (A^{\ast} \nabla \phi) - \varphi_{\epsilon} \ast (v \cdot \nabla \phi) =0,$$ weakly in $[\epsilon,T-\epsilon] \times \R^n$. Hence, by taking $\varphi_{\epsilon} \ast \phi$ as a test function and using Young's inequality, we obtain
\begin{align*}
   &- \frac{1}{2} \partial_t \int_{\R^n} |\varphi_{\epsilon} \ast \phi|^2 \, \d x\biggl |_t \, \d x + \int_{\R^n}  \varphi_{\epsilon} \ast (A^* \nabla \phi)  \cdot \varphi_{\epsilon} \ast \nabla  \phi \,  \d x \bigg |_t \\& \leq \frac{\lambda}{2} \int_{\R^n} \varphi_{\epsilon} \ast |\nabla \phi|^2 \, \d x \bigg |_t + \frac{ \|v\|^2_{\L^{\infty}}}{2\lambda} \int_{\R^n} |\varphi_{\epsilon} \ast \phi|^2 \, \d x  \bigg |_t
\end{align*}
for every $t \in [\epsilon,T-\epsilon].$ Now, by Gr\"{o}nwall's inequality, we derive that 
\begin{align*}
  &\frac{1}{2}  e^{\biggl(\frac{ \|v\|^2_{\L^{\infty}}}{\lambda} \epsilon \biggr)} \int_{\R^n} |\varphi_{\epsilon} \ast \phi|^2 \, \d x \biggl |_{\epsilon}+  \int_{\epsilon}^{T-\epsilon} \int_{\R^n} e^{\biggl(\frac{ \|v\|^2_{\L^{\infty}}}{\lambda} t\biggr)}  (\varphi_{\epsilon} \ast A^* \nabla \phi)  \cdot \varphi_{\epsilon} \ast \nabla  \phi \, \d t  \d x \\& \leq  \frac{\lambda}{2} \int_{\epsilon}^{T-\epsilon} \int_{\R^n} e^{\biggl(\frac{ \|v\| ^2_{\L^{\infty}}}{\lambda} t\biggr)}  \varphi_{\epsilon} \ast |\nabla \phi|^2 \, \d x \d t + \frac{1}{2}  e^{\biggl(\frac{ \|v\|^2_{\L^{\infty}}}{\lambda} (T-\epsilon) \biggr)} \int_{\R^n} |\varphi_{\epsilon} \ast \phi|^2 \, \d x \biggl |_{T-\epsilon}.
\end{align*}
By taking $\epsilon \to 0$ in the above inequality and using the elliptic condition \eqref{eq:ellipticcond} as well as $\phi \in \L^2([0,T];\H^1(\R^n)) \cap C([0,T];\L^2(\R^n))$, it is implied that
\begin{align*}
    \int_{0}^T \int_{\R^n} e^{\biggl(\frac{ \|v\|^2_{\L^{\infty}}}{\lambda} t\biggr)}  |\nabla \phi|^2 \, \d x \d t  =0.
\end{align*}
 In conclusion, $\phi$ only depends on time and $- \partial_t \phi =0$ weakly in $[0,T] \times \R^n.$ This together with $\phi_i(T,x)=0$ a.e. in $\R^n$ implies that $\phi=0$ a.e. in $[0,T] \times \R^n.$ For the existence part, we follow the argument in \cite[Lem. 3.4]{P2}. Let  $f_i \in  C^{\infty}_0([0,T] \times \R^n), v_i \in C^{\infty}([0,T] \times \R^n, \R^n)$, and $A_i \in C^{\infty}([0,T] \times \R^n,\R^n \times \R^n)$ be uniformly bounded sequence of functions converging to $f,v,A$ pointwise, where $A_i$ satisfies \eqref{eq:ellipticcond} and $\lim_{i \to \infty} \|f_i-f\|_{\L^2([0,T] \times \R^n)}=0.$ The construction of such sequences is derived by a standard mollification argument. By a standard result on parabolic equations, see \cite{A}, there exist smooth functions $\phi_i \in \L^2([0,T];\H^1( \R^n))$ satisfying $$ -\partial_t \phi_i - \div A_i^* \nabla \phi_i -v_i \cdot \nabla \phi_i=f_i,$$ weakly in $[0,T] \times \R^n$ with $\phi(T,x)=0$ for every $x \in \R^n.$ Hence,
\begin{align*}
    &-\partial_t (\phi_i-(T-t)\sup f_i ) - \div A_i^* \nabla (\phi_i-(T-t)\sup f_i )\\& -v_i \cdot \nabla (\phi_i-(T-t)\sup f_i ) \leq 0,
\end{align*}
 weakly in $[0,T] \times \R^n$. By the maximum principle, see \cite{A},
 $  \phi_i-(T-t)\sup f_i \leq 0.$
Likewise, one can prove that $\phi_i-(T-t)\inf f_i \geq 0$. Now, by taking $\phi_i$ as a test function, we derive that
\begin{align*}
-\frac{1}{2} \partial_t \|\phi_i(t,\cdot)\|^2_{\L^2(\R^n)} + \lambda\|\nabla \phi_i(t,\cdot)\|^2_{\L^2(\R^n)}& \leq \int_{\R^n} \phi_i \, v_i \cdot \nabla \phi_i \, \d x \\& +  \int_{\R^n} f_i\, \phi_i \, \d x .
\end{align*}
for every $0\leq t \leq T$. Hence, by Young and Gr\"{o}nwall's inequalities, we derive that
\begin{align}
\label{eq:energyboundapprox}
    \sup_{0\leq t \leq T}\frac{1}{2} \|\phi_i(t,\cdot)\|^2_{\L^2(\R^n)} + \frac{\lambda}{2}\|\nabla \phi_i(t,\cdot)\|^2_{\L^2([0,T] \times \R^n)}& \leq e^{  \biggl(1+\frac{\|v_i\|^2_{\L^{\infty}}}{\lambda}\biggr)T}\|f_i\|^2_{\L^2([0,T] \times \R^n)}.
\end{align}
Then, by using the equation, it is obtained that $\phi_i$ is bounded uniformly in $\L^2([0,T];\H^1(\R^n))$ and $\partial_t \phi_i$ is bounded uniformly in $\L^2([0,T];\H^{-1}(\R^n)) +\L^1([0,T]; \L^1_{\loc}(\R^n)).$ Hence, by the classical result in \cite{Simon}, $\phi_i$ converges to $\phi \in \L^2([0,T]; \H^1(\R^n))$ pointwise a.e. in $[0,T] \times \R^n$ and strongly in $\L^2([0,T];\L^2_{\loc}(\R^n))$, up to a subsequence. Moreover, $\phi$ satisfies \eqref{eq:dualsol} weakly in $[0,T] \times \R^n$ and
\begin{align}
\label{eq:linftybound}
   (T-t) \inf f \leq \phi(t,x) &\leq (T-t) \sup f.
\end{align}
Since $\phi \in \L^2([0,T];\H^1(\R^n))$ and $\partial_t \phi \in \L^2([0,T];\H^{-1}(\R^n)) +\L^1([0,T]; \L^1_{\loc}(\R^n))$, by \cite[Th. 1]{P1}, we deduce that $\phi \in C([0,T]; \L^1_{\loc}(\R^n))$. Then, by \eqref{eq:linftybound}, we get
\begin{align*}
    \int_{B} |\phi(t,x) - \phi(s,x)|^2 \, \d x \leq 2 T \|f\|_{\L^{\infty}} \int_{B} |\phi(t,x) - \phi(s,x)| \, \d x,
\end{align*}
for every $0 \leq s\leq t \leq T$
and ball $B \subset \R^n.$ Hence, $\phi \in C([0,T]; \L^2_{\loc}(\R^n)).$ 

Now, we demonstrate that $\phi_i$ converges to $\phi$ in the space $C([0,T];\L^1_{\loc}(\R^n))$. Let us fix $\varphi \in C^{\infty}_0([0,T] \times \R^n).$ By taking the differences of the equations for $\phi, \phi_i$ and using $(\phi-\phi_i) \, \varphi^2$ as a test function, we obtain
\begin{align*}
      &\lambda \int_0^T \int_{\R^n} |\nabla  (\phi-\phi_i)|^2 \, \varphi^2 \, \d x \d t \\&\leq  \int_0^T \int_{\R^n} (A_i^*-A^*)
\nabla \phi \cdot (\varphi^2\nabla  (\phi-\phi_i) + 2(\phi-\phi_i)\varphi \nabla \varphi) \,  \d x \d t  
   \\& -\int_0^T\int_{\R^n} \varphi^2 \,(\phi_i-\phi) \,  v_i \cdot \nabla (\phi_i-  \phi) \, \d x \d t
     -\int_0^T\int_{\R^n} \varphi^2 \, (\phi_i-\phi)\, (v_i -v) \cdot \nabla  \phi   \, \d x \d t
\\&  + \int_0^T \int_{\R^n} \varphi^2 \, (f_i- f) \, (\phi-\phi_i) \, \d x \d t
\end{align*}
for $t \in [0,T].$ In conclusion, by using strong convergence of $\phi_i$ to $\phi$ in $\L^2([0,T];\L^2_{\loc}(\R^n))$, \eqref{eq:energyboundapprox}, and Lebesgue's dominated convergence, we obtain $\phi_i$ converges to $\phi$ strongly in the space $\L^2([0,T]; \H^1_{\loc}(\R^n)).$ Then, by using the equations for $\phi, \phi_i$, we derive that $\partial_t \phi_i$ converges to  $\partial_t \phi$ in $\L^2([0,T]; \L^2(B))+\L^2( [0,T];\H^{-1}(B))$ for every ball $B \subset \R^n$. Hence, by \cite[Thm. 1]{P1}, $\phi_i$ converges to $\phi$ in $C([0,T];\L^1_{\loc}(\R^n))$. Note that the argument above works also for any pointwise approximation of $f,v,A$ by uniformly bounded sequences $f_i \in  \L^2([0,T] \times \R^n) \cap \L^{\infty}([0,T] \times \R^n), v_i \in \L^{\infty}([0,T];\L^{\infty}(\R^n \times \R^n)), A_i \in \L^{\infty}([0,T] \times \R^n,\R^n \times \R^n)$, where $A_i$ satisfies \eqref{eq:ellipticcond} and $\lim_{i \to \infty} \|f_i-f\|_{\L^2([0,T] \times \R^n)}=0$. Finally, we prove that in fact $\phi \in C([0,T];\L^2(\R^n)).$ %The idea is to prove $\sup_{0\leq t \leq T}\int_{\R^n \setminus B(0,r)} \phi^2 \, \d x |_t$ goes to zero uniformly as $r \to \infty$ and then use the fact that $\phi \in C([0,T];\L^2_{\loc}(\R^n))$ to imply $\phi \in C([0,T];\L^2(\R^n)).$ 
To elaborate the proof, let $r>0$ and $\psi \in C^{\infty}(\R^n)$ be a function which satisfies $0 \leq \psi \leq 1$ in $\R^n$, $\psi = 0$ in $B(0,r/2),$ $\psi = 1$ in $\R^n \setminus B(0,r),$ and $|\nabla \psi| \leq \frac{C}{r}$ in $\R^n$ for a constant $C>0.$ By using the equation for $\phi_i$, we take $\psi \, \phi_i$ as a test function to arrive at
\begin{align*}
  & - \frac{1}{2}\partial_t \int_{\R^n} \phi_i^2 \, \psi^2 \, \d x \bigg |_t + \int_{\R^n} A_i^* \nabla \phi_i \cdot  \nabla (\phi_i\, \psi^2) \, \d x \bigg |_t  \leq \int_{\R^n} \|v_i\|_{\L^{\infty}} |\nabla \phi_i| \, |\phi_i| \, \psi ^2 \, \d x \bigg |_t \\& + \int_{\R^n} |f_i| \, |\phi_i| \, \psi^2 \, \d x \bigg |_t.
\end{align*}
Then, by using Young's inequality and Gr\"{o}nwall's inequality, we obtain
\begin{align*}
   &\sup_{0 \leq t \leq T} \frac{1}{2}\int_{\R^n} \phi_i^2 \, \psi^2 \, \d x \bigg |_t 
 \\&\leq e^{\biggl(1+\frac{\|v_i\|^2_{\L^{\infty}}}{\lambda}\biggr)T} \biggl(\int_0^T \int_{\R^n} \frac{2 \Lambda^2}{\lambda} \phi_i^2  |\nabla \psi|^2  \, \d x \d t +\int_0^T \int_{\R^n} f_i^2 \, \psi^2 \, \d x \d t \biggr). 
\end{align*}
Hence, by strong convergence of $\phi_i$ to $\phi$ in both $C([0,T];\L^1_{\loc}(\R^n))$ and $\L^2([0,T];\L^2_{\loc}(\R^n))$, Fatou's lemma, and Lebesgue's dominated convergence, we obtain
\begin{align*}
  &  \sup_{0 \leq t \leq T} \frac{1}{2}\int_{\R^n} \phi^2 \, \psi^2 \, \d x \bigg |_t 
\\&\leq e^{\biggl(1+\frac{\|v\|^2_{\L^{\infty}}}{\lambda}\biggr)T} \biggl( \frac{2 (C\, \Lambda)^2}{(r\, \lambda)^2}  \int_0^T \int_{B(0,r)} \phi^2   \, \d x \d t +\int_0^T \int_{\R^n \setminus B(0,r/2)} f^2  \, \d x \d t \biggr).
\end{align*}
Thus, we obtain $$\lim_{r \to 0} \sup_{0 \leq t \leq T} \int_{\R^n \setminus B(0,r)} \phi^2 \, \d x \bigg |_t =0.$$ Since $\phi \in C([0,T]; \L^2_{\loc}(\R^n))$, for every $t \in [0,T]$, $\epsilon>0,$ ball $B \subset \R^n$, there exists a constant $\delta(t,\epsilon,B)>0$ depending on $t, \epsilon,B$, such that 
\begin{align*}
    \int_{B}|\phi(t,x)-\phi(s,x)|^2 \, \d x < \epsilon,
\end{align*}
for all $s \in  [0,T]$ satisfying $|s-t|< \delta(t,\epsilon,B).$ Let $\epsilon>0$ and $r>0$ satisfies 
\begin{align*}
    \sup_{0 \leq t \leq T}\int_{\R^n \setminus B(0,r)} \phi^2 \, \d x \bigg |_t < \frac{\epsilon}{8}.
\end{align*}
Then, for every $t>0$, we have
\begin{align*}
&\int_{\R^n}|\phi(t,x)-\phi(s,x)|^2 \, \d x = \int_{B(0,r)}|\phi(t,x)-\phi(s,x)|^2 \, \d x  + \int_{\R^n \setminus B(0,r)}|\phi(t,x)-\phi(s,x)|^2 \, \d x \\& < \frac{\epsilon}{2} + 2 \int_{\R^n \setminus B(0,r)} \phi^2 \, \d x \bigg |_t +  2 \int_{\R^n \setminus B(0,r)} \phi^2 \, \d x \bigg |_s  < \epsilon,
\end{align*}
for every $s \in [0,T]$ satisfying $|s-t| \leq \delta(t,\epsilon/2,B(0,r)).$ In conclusion, $\phi \in C([0,T]; \L^2(\R^n))$. This completes the proof.

\end{proof}
 Now, we define a notion of solution for convection-diffusion equations.
\begin{defn}
\label{def:convec1}
    Let $T>0, g \in \L^1(\R^n)$, and $R \in \L^1([0,T]\times \R^n).$ Then, we say that $u$ is a convection solution for 
\begin{align*}
    \partial_t u -  \div A \nabla u + \div (v\, u) = R,
\end{align*}
in $[0,T] \times \R^n$ with initial value $g$ if $u \in \L^1([0,T]\times \R^n)$ and
\begin{align*}
     \int_0^T \int_{\R^n} u\, f\, \d x \d t =\int_{\R^n} g \, \phi \, \d x \bigg |_0 + \int_{0}^T \int_{\R^n} R\, \phi \, \d x \d t,
\end{align*}
    for every $f \in \L^{\infty}([0,T] \times \R^n) \cap \L^2([0,T] \times \R^n)$ and dual solution $\phi$ satisfying \eqref{eq:dualsol}. We say that a convection solution $u$ for
    \begin{align*}
        \partial_t u - \div A \nabla u + \div(v \, u) = R,
    \end{align*}
  in $[0,T] \times \R^n$ with the initial value $g$ satisfies the conservation of mass if 
\begin{align*}
    \int_{\R^n} u \, \d x \bigg |_s = \int_{\R^n} g \, \d x + \int_0^s \int_{\R^n} R \, \d x \d t,
\end{align*}
for a.e. $s \in [0,T].$
    
\end{defn}

Now, we prove Theorem \ref{thm:boundeddrift}.
%\begin{thm}
%\label{thm:convecsol1}
%    Let $T>0, g \in \L^1(\R^n),$ and $R \in \L^1([0,T]\times \R^n).$ Then, there exists a unique convection solution $u \in \L^1([0,T]\times \R^n)$ for
%\begin{align*}
%    \partial_t u-\div A \nabla u- \div (v \, u) = R,
%\end{align*}
%in $[0,T] \times \R^n$ with the initial value $g$, which satisfies
%\begin{align*}
 %   \lim_{t \to 0} \|u(t,\cdot)-g\|_{\L^1(\R^n)} =0.
%\end{align*}
%Moreover, we have $u \in \L^p([0,T];W^{1,p}_{\loc}(\R^n))$ for every $1<p < \frac{n}{n-1}$ and the following strong maximum principle holds: If $(\ln(g))^- \in \L^1_{\loc}(\R^n)$, then $u>0$ a.e. in $[0,T] \times \R^n,$ $$\ln(u) \in \L^2([0,T];\H^1_{\loc}(\R^n)) \cap C([0,T];\L^1_{\loc}(\R^n)),$$ and
%\begin{equation}
%\label{eq:strongmaxprin}
%\begin{aligned}
% \frac{\lambda}{2}\int_0^T \int_{\R^n} |\nabla \ln(u)|^2 \varphi^2 + \int_0^T \int_{\R^n} \frac{R}{u} \varphi^2 &\leq  \int_{\R^n} (\ln(g))^- \varphi^2 + \int_{\R^n} g  + \int_0^T \int_{\R^n} R \\&+ 3 \int_{0}^T \int_{\R^n} v^2 \varphi^2 +  \frac{2\Lambda^2}{\lambda} \int_{0}^T \int_{\R^n} |\nabla \varphi|^2.
% \end{aligned}
%\end{equation}
%for every $\varphi \in C^{\infty}_0(\R^n).$
%\end{thm}

\begin{proof}[Proof of Theorem \ref{thm:boundeddrift}]
The proof is divided into several steps.\\
\textbf{1}. First, we prove the uniqueness of convection solutions. If $u_1, u_2 \in \L^1([0,T] \times 
\R^n)$ are convection solutions of \eqref{eq:dualsol} with the initial value $g,$ then
\begin{align*}
   \int_0^T \int_{\R^n} (u_1-u_2) f \, \d x \d t =0
\end{align*}
  for every $f \in C^{\infty}_0([0,T] \times \R^n).$ Hence, $u_1=u_2$ a.e. in $[0,T] \times \R^n.$\\
 \textbf{2}. In this step, we prove some properties of approximate solutions. For now, we assume that $R,g$ are non-negative and $(\ln{g})^- \in \L^1_{\loc}(\R^n)$. We remove the extra assumptions later in Step 6. Let $\varphi_{\epsilon}$ be a positive approximation of identity. We define $ \tilde R_i =\varphi_{1/i} \ast R, \tilde g_i := \varphi_{1/i} \ast g$, $ A_i= \varphi_{1/i} \ast A$, and $v_i := \varphi_{1/i} \ast v$.  Then, $R_i = \tilde R_i + \frac{1}{i} e^{-x^2}, g_i = \Tilde{g}_i + \frac{1}{i} e^{-x^2}$ are positive smooth approximations of $R,g$ in $\L^1([0,T];\L^1(\R^n))$, respectively. Note that, by Young's convolution inequality, $R_i \in \L^p([0,T] \times \R^n),g_i \in \L^p(\R^n)$ for every $p \geq 1$ and $i.$ Hence, by a standard theorem on parabolic equations, see \cite{A}, there exists a smooth function $u_i \in \L^2([0,T]; \H^1(\R^n))$ such that $u_i(0,x) = g_i(x)$ for every $x \in \R^n$ and 
  \begin{align}
  \label{eq:approxsol}
\partial_t u_i - \div A_i \nabla u_i + \div (v_i\, u_i ) = R_i,     
  \end{align}
 in $[0,T] \times \R^n.$ By the strong maximum principle, see \cite{A}, $u_i >0$ in $[0,T] \times \R^n.$ Then, taking $u_i^{\alpha}$ as a test function for $\alpha >0$, we obtain 
\begin{align*}
   &\frac{1}{\alpha + 1} \partial_t  \int_{\R^n} u_i^{\alpha+1} \, \d x \bigg |_t  + \alpha \int_{\R^n} u_i^{\alpha-1} \, A_i \nabla u_i \cdot \nabla u_i \, \d x \bigg |_t \\& - \alpha \int_{\R^n} u_i^{\alpha} \, v_i \cdot \nabla u_i \, \d x \bigg |_t = \int_{\R^n} R_i u_i^{\alpha} \, \d x \bigg |_t,
\end{align*}
for every $t \in [0,T]$. Then, by Young's inequality, we obtain
\begin{align*}
     &\frac{1}{\alpha + 1} \partial_t  \int_{\R^n} u_i^{\alpha+1} \, \d x \bigg |_t  +  \frac{\alpha \lambda}{2} \int_{\R^n} |\nabla u_i|^2 u_i^{\alpha-1} \, \d x \bigg |_t \\& \leq  \biggl( \frac{ \alpha\|v_i\|^2_{\L^{\infty}}}{2\lambda } + \frac{\alpha}{\alpha+1}\biggr) \int_{\R^n} u_i^{\alpha+1} \, \d x+ \frac{1}{\alpha}  \int_{\R^n} R_i^{\alpha} \, \d x \bigg |_t.
\end{align*}
for every $t \in [0,T].$ Hence, by Gr\"{o}nwall's inequality, we have
\begin{align}
\label{eq:integestim}
   \sup_{0 \leq t \leq T} \int_{\R^n} u_i^{\alpha+1} \, \d x \d t \leq e^{\big ( \frac{(\alpha+1) \|v_i\|^2_{\L^{\infty}} }{2 \lambda} +\alpha \big ) T}\biggl(\frac{\alpha +1}{\alpha}\int_{0}^T \int_{\R^n} R_i^{\alpha} \, \d x \d t + \int_{\R^n} g_i^{\alpha +1} \, \d x\biggr).
\end{align}
 Now, take $s \in [0,T]$ and a sequence  functions $\psi_j \in C_0^{\infty}(\R^n)$ satisfying
 \begin{equation}
 \label{eq:spectestfunc}
 \begin{aligned}
   1_{B(0,j)} &\leq  \psi_j \leq 1_{B(0,2j)},\\
   \psi_j &\leq \psi_{j+1},\\
   |\nabla^k \psi_j| &\leq \frac{C_k}{j^k},
 \end{aligned}
 \end{equation}
 for all non-negative integers $j,k$, where $C_k$ is a constant depending on $n,k.$ Then,
\begin{equation}
\begin{aligned}
\label{eq:masequ}
    \int_{\R^n} u_i \, \psi_j \, \d x\bigg|_s&= \int_{\R^n} g_i\, \psi_j \, \d x \bigg |_0 + \int_0^s \int_{\R^n} u_i \div A_i \nabla \psi_j \, \d x \d t \\& + \int_0^s \int_{\R^n} u_i \, v_i \cdot \nabla \psi_j \, \d x \d t + \int_{0}^s\int_{\R^n} R_i \, \psi_j \, \d x \d t,
\end{aligned}
\end{equation}
for every $0\leq s \leq T$. Hence, using \eqref{eq:integestim} and Hölder's inequality, we deduce
\begin{align*}
&  \biggl|  \int_0^s \int_{\R^n} u_i \div A_i \nabla \psi_j \, \d x \d t- \int_0^s \int_{\R^n} u_i v_i \cdot \nabla \psi_j \, \d x \d t \biggr| \leq \frac{C_1+ C_2+1}{j} \int_0^s \int_{B(0,2j)} u_i \, \d x \d t \\&\leq (|B(0,1)|(2j)^n)^{\frac{\alpha}{\alpha+1}} \frac{C_1+ C_2+1}{j} \biggl(\int_{0}^s\int_{B(0,2j)} u_i^{\alpha+1} \, \d x \d t\biggr)^{\frac{1}{\alpha+1}}
\\& \leq C (2j)^{\frac{n \alpha}{\alpha+1}-1}T^{\frac{1}{\alpha+1}} e^{\big ( \frac{ (\alpha+1) \|v_i\|^2_{\L^{\infty}} }{2\lambda} +\alpha \big ) T}\biggl(\frac{\alpha +1}{\alpha}\int_{0}^T \int_{\R^n} R_i^{\alpha} \, \d x \d t + \int_{\R^n} g_i^{\alpha +1} \, \d x\biggr),
\end{align*}
for $\alpha >0$, where $C$ is a constant depending on $n$. Taking $\alpha$ small enough ( $\alpha < \frac{1}{n-1}$), letting $j \to \infty$ in \eqref{eq:masequ}, and using monotone convergence theorem, we arrive at    \begin{align}
    \label{eq:masconserv}
        \int_{\R^n} u_i \, \d x\bigg|_s = \int_{\R^n} g_i \, \d x  + \int_0^s \int_{\R^n} R_i \, \d x \d t.
    \end{align}
    \textbf{3}. In this step, we prove logarithm-type estimates.
Since $u_i >0$, we can define the smooth function $S_i := \ln u_i$. Hence, by \eqref{eq:approxsol}, it is implied that 
\begin{align}
\label{eq:entropyequ}
    \partial_t S_i - \div A_i \nabla S_i - A_i \nabla S_i \cdot \nabla S_i + \div v_i + v_i \cdot \nabla S_i = R_i e^{-S_i},
\end{align}
weakly in $[0,T] \times \R^n$. Note that by \eqref{eq:masconserv}, we obtain
\begin{align}
\label{eq:estpospart}
 \int_{\R^n} S_i^+ \, \d x \bigg|_s \leq \int_{\R^n} e^{S_i} \, \d x \bigg|_s =\int_{\R^n} g_i \, \d x  + \int_0^s \int_{\R^n} R_i \d x \d t.
\end{align}
for every $s \in [0,T].$ We need to prove estimates for $S_i^-.$ By using $\varphi^2$, where $\varphi \in C^{\infty}_0(\R^n)$, as a test function and \eqref{eq:entropyequ}, we get
\begin{equation}
\begin{aligned}
   &\int_0^s \int_{\R^n} \varphi^2 \, A \nabla S_i \cdot \nabla S_i \, \d x \d t + \int_0^s \int_{\R^n} R_i e^{-S_i} \varphi^2 + \int_{\R^n} S_i \varphi^2 \bigg |_0 = \int_{\R^n} S_i \varphi^2 \bigg |_s \\&+\int_0^s \int_{\R^n}2 \varphi\,  A \nabla S_i \cdot \nabla \varphi \, \d x \d t - \int_0^s \int_{\R^n}2 \varphi \, v_i \cdot \nabla \varphi \, \d x \d t + \int_{0}^s \int_{\R^n} \varphi^2 \, v_i \cdot \nabla S_i \, \d x \d t,
\end{aligned}
\end{equation}
for every $s \in [0,T].$ Then, by using Young's inequality again and \eqref{eq:estpospart}, we obtain
\begin{equation}
\label{eq:approxentropy}
\begin{aligned}
   &\frac{\lambda}{2}\int_0^T \int_{\R^n} |\nabla S_i|^2 \varphi^2 \, \d x \d t+  \int_0^T \int_{\R^n} R_i e^{-S_i} \varphi^2  \, \d x \d t + \sup_{0\leq s \leq T} \int_{\R^n} S_i^- \varphi^2 \, \d x \bigg |_s \\& \leq  2\int_{\R^n} S_i^- \varphi^2 \, \d x \bigg |_0   + 2\int_{\R^n} g_i \, \d x  + 2\int_0^T \int_{\R^n} R_i \, \d x \d t \\& + 6 \int_{0}^T \int_{\R^n} v_i^2 \varphi^2 \, \d x \d t +  \frac{4\Lambda^2}{\lambda} \int_{0}^T \int_{\R^n} |\nabla \varphi|^2 \, \d x \d t.
\end{aligned}
\end{equation}
\textbf{4}. Now, we use the estimates to get to a limit. By  \eqref{eq:entropyequ} and \eqref{eq:approxentropy}, for every ball $B \subset \R^n$, $\|S_i\|_{\L^2([0,T];\H^1(B))}, \|R_i e^{-S_i}\|_{\L^1([0,T] \times B)}$ is uniformly bounded and $\partial_t S_i$ has uniformly bounded norm in $\L^2([0,T];\H^{-1}(B)) + \L^1([0,T];\L^1(B)).$ Hence, by \cite[Thm. 1.1]{P1}, $S_i$ converges to $S \in \L^2([0,T]; \H^1_{\loc}(\R^n)) $ strongly in $\L^2([0,T]; \L^2_{\loc}(\R^n))$ and weakly in $\L^2([0,T];\H^1_{\loc}(\R^n))$, up to a subsequence. In particular, up to a subsequence, $S_i$ converges pointwise to $S$ a.e. in $[0,T] \times \R^n$. In conclusion, by Fatou's lemma and \eqref{eq:masconserv}, we derive $$\int_{0}^T\int_{\R^n} e^S \, \d x \d t \leq \liminf_{i \to \infty} \int_{0}^T \int_{\R^n} e^{S_i} \, \d x \d t = T\int_{\R^n} g \, \d x + T \int_{0}^T \int_{\R^n} R \, \d x \d t.$$
Also, by letting $i \to \infty$ in \eqref{eq:entropyequ} and using \cite[Thm. 4.3]{BM}, together with Fatou's lemma, we conclude \eqref{eq:strongmaxprin}. %To prove that $S \in C([0,T];\L^1_{\loc}(\R^n)),$ we use \eqref{eq:energyestim} to derive 
%\begin{align*}
% \lim_{i \to \infty}   \int_{0}^T\int_{\R^n} (A \nabla S_i \cdot \nabla S_i+ R_i e^{-S_i}) \psi =\int_0^T \int_{\R^n} (\partial_t S -\div A \nabla S -\div v - v \cdot \nabla S) \psi, 
%\end{align*}
%for every $\psi \in C^{\infty}_0([0,T) \times \R^n)$, where we use convergence of $S_i$ to $S$ strongly in $\L^2([0,T];\L^2_{\loc}(\R^n))$ and weakly in $\L^2([0,T];\H^1_{\loc}(\R^n))$. Hence, by \eqref{eq:approxentropy}, we derive
%\begin{align*}
%    \sup_{\psi} \biggl| \int_0^T \int_{\R^n} (\partial_t S -\div A \nabla S -\div v - v \cdot \nabla S) \psi \biggr| \leq C_B,
%\end{align*}
%where $B \subset \R^n$ is a ball, the supremum is taken among $\psi \in C^{\infty}_0([0,T) \times B)$ where $|\psi| \leq 1$, and $C_B$ is a constant depending on $B.$ In conclusion, $\partial_t S -\div A \nabla S -\div v - v \cdot \nabla S \in \L^1([0,T]; \L^1_{\loc}(\R^n)).$ In particular, $S \in \L^2([0,T]; \H^1_{\loc}(\R^n))$ and $\partial_t S \in \L^2([0,T]; \H^{-1}_{\loc}(\R^n)) + \L^1([0,T];\L^1_{\loc}(\R^n))$. Then, by \cite[Thm. 1.1]{P1}, we derive that $S \in C([0,T];\L^1_{\loc}(\R^n)).$ 
Now, we prove that $u:= e^S$ is the desired convection solution. Let us fix a function $f \in \L^{\infty}([0,T] \times \R^n) \cap \L^2([0,T] \times \R^n)$ and assume that $f_i \in C^{\infty}_0([0,T] \times \R^n)$ converges to $f$ in $\L^{\infty}([0,T] \times \R^n)$-weak* and strongly in $\L^2([0,T] \times \R^n)$. We define the sequence $f^k_i := f_i\, 1_{\{u_i \leq k\}}$. Since $u_i,v_i$ are smooth and satisfy \eqref{eq:approxsol} weakly, we obtain 
\begin{align}
\label{eq:convappro}
      \int_0^T\int_{\R^n} u_i \, f_i^k \, \d x \d t=  \int_{\R^n} g_i \, \phi^k_i \, \d x \bigg |_0 + \int_0^T \int_{\R^n} R_i \, \phi_i^k \, \d x \d t,  
\end{align}
for every dual solution $\phi_i^k$ satisfying 
\begin{align*}
    - \partial_t \phi_i^k - \div A_i^* \nabla \phi_i^k - v_i \cdot \nabla \phi_i^k = f_i^k,
\end{align*}
weakly in $[0,T] \times \R^n$. By Proposition \ref{prop:dualsol}, $\phi_i^k$ are uniformly bounded and $\phi^k, \phi^k(0,\cdot)$ converge pointwise to $\phi^k,\phi^k(0,\cdot)$, respectively. Moreover, $\phi^k$ satisfies 
\begin{align}
\label{eq:dualappro}
    - \partial_t \phi^k - \div A^* \nabla \phi^k - v \cdot \nabla \phi^k = f^k,
\end{align}
weakly in $[0,T] \times \R^n$ with $\phi^k(T,x)=0$ for a.e. $x \in \R^n$. Since $|u_i f_i^k| \leq k |f|$ and, up to a subsequence, $u_i$ converges pointwise to $u$, by Lebesgue's dominated convergence and taking $i \to \infty$ in \eqref{eq:convappro}, it is implied that 
\begin{align*}
     \int_{0}^T \int_{\R^n} u \, f\, 1_{\{u \leq k\}} \, \d x \d t =\int_{\R^n} g \, \phi^k \, \d x  \bigg |_0 +  \int_{0}^T \int_{\R^n} R \, \phi^k \, \d x \d t.
\end{align*}
By the same argument as above, up to a subsequence, $\phi^k$ and $\phi^k(0,\cdot)$ are uniformly bounded and converge pointwise to $\phi$ and $\phi^k(0,\cdot)$, respectively, where $\phi$ satisfies \eqref{eq:dualsol}. Hence, by the dominated convergence, we obtain
\begin{align*}
       \int_{0}^T \int_{\R^n} u \, f \, \d x \d t  = \int_{\R^n} g \, \phi \, \d x \bigg |_0 + \int_{0}^T \int_{\R^n} R \, \phi \, \d x \d t.
\end{align*}
    Thus, $u$ is the desired convection solution. Since $u_i \in \L^2([0,T]; \H^1(\R^n)) $ is the smooth solution of \eqref{eq:approxsol} with $u_i(0,x)=g_i(x)$ for $x \in \R^n$, by using the test function $T_k u_i$ for the equation of $u_i$, we obtain that 
    \begin{align*}
     &\sup_{0 \leq t \leq T}\int_{\R^n} \frac{1}{k}|T_k(u_i)|^2 \, \d x \bigg |_t +  \frac{\lambda}{2k} \int_0^T \int_{\R^n} |\nabla T_k u_i|^2 \, \d x \d t \\&\leq e^{\big(1+\frac{\|v_i\|^2_{\L^{\infty}}}{\lambda}\big)T} \biggl(\int_{\R^n} g_i \,  \d x + \int_{0}^T \int_{\R^n} R_i \, \d x \d t\biggr). 
     \end{align*}
    Hence, by Fatou's lemma and weak convergence of $\nabla T_k u_i$ to $\nabla T_k u$ in $\L^2([0,T] \times \R^n)$, it is implied that $u \in \mathcal{T}^2([0,T] \times \R^n)$ and
\begin{align*}
     &\sup_{0 \leq t \leq T}\frac{1}{k} \int_{\R^n} |T_k(u)|^2 \, \d x+ \frac{\lambda}{2k}\int_0^T \int_{\R^n} |\nabla T_k u|^2 \, \d x \d t \\&\leq e^{\big(1+\frac{\|v\|^2_{\L^{\infty}}}{\lambda}\big)T} \biggl(\int_{\R^n} g \,  \d x + \int_{0}^T \int_{\R^n} R \, \d x \d t\biggr).
\end{align*}
    By using \cite[Lem. 4.2]{BBGGPV}, we arrive at $u \in \L^2([0,T];W^{1,p}_{\loc}(\R^n))$ for all $ p < \frac{n}{n-1}$ and $u_i$ converges to $u$ weakly in $\L^2([0,T];\L^p_{\loc}(\R^n))$, up to subsequence depending on $p$. Moreover, 
    \begin{align*}
        \int_{0}^T \int_{\R^n} u_i \, f  \, \d x \d t= \int_{\R^n} g_i \, \phi \, \d x \bigg |_0 + \int_{0}^T \int_{\R^n} R_i \, \phi \, \d x \d t,
    \end{align*}
    for every $f \in C^{\infty}_0([0,T] \times \R^n)$ and $i.$ Hence, 
      \begin{align*}
         \lim_{i \to \infty}  \int_{0}^T \int_{\R^n} u_i \, f \, \d x \d t  = \int_{\R^n} g \, \phi \, \d x \bigg |_0+ \int_{0}^T \int_{\R^n} R\, \phi\, \d x \d t = \int_{0}^T \int_{\R^n} u \, f \, \d x \d t.
      \end{align*}
      In conclusion, $u_i$ converges to $u$ vaguely in $[0,T] \times \R^n$ without any need to pass to a subsequence.\\
\textbf{5}. In this step, we prove that there exists a set $F \subset [0,T]$ of measure zero such that
\begin{align}
\label{eq:conticonvinitial}
    \lim_{s \to 0;\, s \in [0,T] \setminus F} \int_{\R^n} u\, \psi \, \d x \bigg |_s= \int_{\R^n} g \, \psi \, \d x
\end{align}
for every $\psi \in C_b(\R^n).$ By the Stone–Weierstrass theorem and $\sigma$-compactness of $\R^n$, there exists a countable dense subset $D \subset C^{\infty}_0(\R^n).$  Let us fix an element $\varphi \in D$ and $0 \leq t_1 < t_2 \leq T$. Then, by using the equation of $u_i$ and test function $\varphi$, we have
\begin{align*}
 \biggl | \int_{\R^n} u_i \, \varphi \, \d x \bigg |_{t_1} -\int_{\R^n} u_i \, \varphi \, \d x \bigg |_{t_2}  \,\biggr | &\leq \int_{t_1}^{t_2} \int_{\R^n} \Lambda |\nabla  u_i| \, |\nabla \varphi| \, \d x \d t \\&+ \int_{t_1}^{t_2} \int_{\R^n} |v_i|\, |u_i| \,  |\nabla \varphi| \, \d x \d t + \int_{t_1}^{t_2} \int_{\R^n} R_i \, \varphi \, \d x \d t.
\end{align*}
Hence, by Hölder's inequality, we derive that
\begin{align*}
    & \limsup_{i \to \infty} \biggl | \int_{\R^n} u_i \, \varphi \, \d x \bigg |_{t_1} -\int_{\R^n} u_i \, \varphi \, \d x \bigg |_{t_2}  \,\biggr | \leq\\&\sqrt{t_2-t_1} |\supp \varphi|^{\frac{1}{p'}} \|\nabla \varphi\|_{\L^{\infty}(\R^n)}(\Lambda  + \|v\|_{\L^{\infty}})  \limsup_{i \to \infty} \|u_i+ |\nabla u_i|\|_{\L^2([0,T];\L^p(\supp \varphi))}\\& +\int_{t_1}^{t_2} \int_{\R^n} R \, \varphi \, \d x \d t,
\end{align*}
for every $p < \frac{n}{n-1}.$ Moreover, by \eqref{eq:masconserv}, it is obtained that 
\begin{align*}
    \biggl|\int_{\R^n} u_i\,  \varphi \,  \d x \bigg |_s \biggr| \leq \| \varphi\|_{\L^{\infty}}\int_{\R^n} u_i \,  \d x \bigg |_s \leq \| \varphi\|_{\L^{\infty}} \biggl(\int_{\R^n} g_i + \int_0^s \int_{\R^n} R_i \, \d x \d t\biggr),
\end{align*}
for every $0\leq s \leq T$. In conclusion, by the Arzel\`{a}–Ascoli theorem, the sequence $\int_{\R^n} u_i \, \varphi \, \d x : [0,T] \to \R$ converges uniformly to a continuous function $f_{\varphi}:[0,T] \to \R$, up to a subsequence. Since $u_i$ converges weakly to $u$ in $\L^1([0,T];\L^1_{\loc}(\R^n))$, up to a subsequence, $f_{\varphi} = \int_{\R^n} u \, \varphi \, \d x$ a.e. in $[0,T].$ By combining the previous results, $\int_{\R^n} u \, \varphi \, \d x$ is continuous in $[0,T]$ outside a set $F_{\varphi}$ of measure zero, which depends on $\varphi$. We define $F := \cup_{\varphi \in D} F_{\varphi} \cup G$, where $[0,T] \setminus G$ is the set of points $s \in [0,T]$ such that $\int_{\R^n} u \, \d x \big |_s$ is finite and $s$ is Lebesgue point for the function $\int_{\R^n} u \, \d x$. By Fubini's theorem and Lebesgue's differentiation theorem, the subset $F$ is of Lebesgue measure zero. Let $s \in (0,T) \setminus F$ and $0<\epsilon < \min (|T-s|,s)$. Then, by \eqref{eq:masconserv} and Fatou's lemma,
\begin{align*}
   \frac{1}{2\epsilon} \int_{s-\epsilon}^{s+\epsilon} \int_{\R^n} u \, \d x \d t
&\leq  \frac{1}{2\epsilon} \liminf_{i \to \infty} \int_{s-\epsilon}^{s+\epsilon} \int_{\R^n} u_i \, \d x \d t
\\ & = \liminf_{i \to \infty} \int_{\R^n} g_i \, \d x + \frac{1}{2\epsilon} \int_{s-\epsilon}^{s+\epsilon} \int_{0}^t \int_{\R^n} R_i \, \d x \d r \d t\\
&=  \int_{\R^n} g \, \d x + \frac{1}{2\epsilon} \int_{s-\epsilon}^{s+\epsilon} \int_{0}^t \int_{\R^n} R \, \d x \d r \d t  .
\end{align*}
Now, by letting $\epsilon \to 0$ in the above inequality and using Lebesgue's differentiation theorem, we arrive at
\begin{align}
\label{eq:fatoutypeineq}
      \int_{\R^n} u \, \d x \bigg |_s \leq \int_{\R^n} g \, \d x + \int_{0}^s \int_{\R^n} R \, \d x \d t.
\end{align}
Let $\psi \in C^{\infty}_0(\R^n)$ and $\varphi_i \in D$ be a sequence of functions converging to $\psi$ in $C_b(\R^n)$. Hence, by \eqref{eq:fatoutypeineq}, for every $s \in [0,T] \setminus F$ we have
\begin{align*}
  \biggl|\int_{\R^n} u  \, \psi \, \d x \, \bigg |_s - \int_{\R^n} g \, \psi \, \d x \biggr| &\leq \biggl|\int_{\R^n} u \, \varphi_i \, \d x \bigg |_s - \int_{\R^n} g \, \varphi_i \, \d x  \biggr| \\&+ \|\psi-\varphi_i \|_{\L ^{\infty}(\R^n)} \biggl(\int_{\R^n} u \, \d x  \biggr |_s + \int_{\R^n} g \, \d x\biggr) \\& \leq \biggl|\int_{\R^n} u \, \varphi_i \, \d x  \bigg |_s - \int_{\R^n} g \, \varphi_i \, \d x \biggr| \\ &+  \|\psi-\varphi_i \|_{\L ^{\infty}(\R^n)} \biggl(2\int_{\R^n} g \, \d x + \int_0^T \int_{\R^n} R \, \d x \d t\biggr).
\end{align*}
By letting $s \to 0$, we derive
\begin{align*}
  \limsup_{s \to 0; s \in [0,T] \setminus F}   \biggl|\int_{\R^n} u \, \psi \, \d x \, \bigg |_s - \int_{\R^n} g \, \psi \, \d x \biggr| \leq  \|\psi-\varphi_i \|_{\L ^{\infty}(\R^n)} \biggl(2\int_{\R^n} g \, \d x + \int_0^T \int_{\R^n} R \, \d x \d t\biggr).
\end{align*}
Since $i$ is arbitrary, it is implied that
\begin{align}
\label{eq:weak*convergence}
    \lim_{s \to 0;[0,T] \setminus F}   \int_{\R^n} u \, \psi \, \d x\,  \bigg |_s = \int_{\R^n} g \, \psi \, \d x.
\end{align}
Now, we prove that in fact the convergence in \eqref{eq:weak*convergence} holds for every $\psi \in C_b(\R^n)$. First, by \eqref{eq:fatoutypeineq}, we obtain
     \begin{align*}
       \limsup_{s \to 0; s \in [0,T] \setminus F}   \int_{\R^n} u \, \d x \bigg |_s \leq \int_{\R^n} g \, \d x.
     \end{align*}
Then, by \eqref{eq:weak*convergence}, we arrive at
\begin{align*}
    \int_{\R^n} g \, \psi_i \, \d x =     \lim_{s \to 0;[0,T] \setminus F}   \int_{\R^n} u \, \psi_i \, \d x\,  \bigg |_s \leq \liminf_{s \to 0;[0,T] \setminus F}\int_{\R^n} u \, \d x\,  \bigg |_s,
\end{align*}
where $\psi_i$ are the increasing sequence of functions in \eqref{eq:spectestfunc}.
Thus, by combining the previous inequalities, we conclude  
\begin{align*}
     \lim_{s \to 0; s \in [0,T] \setminus F}   \int_{\R^n} u \, \d x \bigg |_s = \int_{\R^n} g \, \d x.
\end{align*}
Moreover,
\begin{align*}
    \lim_{s \to 0; s \in [0,T] \setminus F}   \int_{\R^n} u \, (1-\psi_i) \, \d x \bigg |_s &= \lim_{s \to 0; s \in [0,T] \setminus F}   \int_{\R^n} u  \, \d x \bigg |_s -  \int_{\R^n} u  \, \psi_i \,  \d x \bigg |_s \\& = \int_{\R^n} g \, (1-\psi_i) \, \d x.
\end{align*}
Hence,  \begin{align*}
    \lim_{i \to \infty} \limsup_{s \to 0; s \in [0,T] \setminus F} \int_{\R^n \setminus B(0,2i)} u \, \, \d x \bigg |_s  =0.
\end{align*}
In conclusion, by Prokhorov's theorem, we imply \eqref{eq:conticonvinitial} for every $\psi \in C_b(\R^n)$.
\\
\textbf{6}. In this step, we remove the extra assumptions in Step 2 to prove the existence and vague convergence to the initial data. Notice that, by the definition, the convection solutions depend linearly on $R,g.$  Hence, it is enough to find a convection solution for $R^+,g^+.$ By Step 5, for the initial data $R=0, g= e^{-|x|^2}$ there exist a convection solution $ u_e$. Since $\ln (g^+ + e^{-x^2}) \geq - |x|^2$, there exists a convection $u_{eg}$ solution with initial data $R^+,g^+ + e^{-x^2}.$ Taking the difference $u_{eg}- u_e$ gives us the desired result. Also, by Step 5, there exist sets $F_1,F_2 \subset [0,T]$ of Lebesgue measure zero such that 
\begin{align*}
    &\lim_{s \to 0;\, s \in [0,T] \setminus F_1}  \int_{\R^n} u_e \, \psi \, \d x \bigg |_s =\int_{\R^n} e^{-x^2} \, \psi \, \d x  , \\&  \lim_{s \to 0;\, s \in [0,T] \setminus F_2}  \int_{\R^n} u_{eg} \, \psi \, \d x \bigg |_s=\int_{\R^n}(g^+ + e^{-x^2}) \, \psi \, \d x,
\end{align*}
for every $\psi \in C_b(\R^n).$
Hence,
\begin{align*}
      \lim_{s \to 0;\, s \in [0,T] \setminus (F_1 \cup F_2)} \int_{\R^n}(u_{eg}-u_e) \, \psi \, \d x \bigg |_s = \int_{\R^n} g^+ \, \psi \, \d x,
\end{align*}
for every $\psi \in C_b(\R^n)$.\\
\textbf{7}. Finally, we prove that the convection solutions are weak solutions and they satisfy the conservation of mass. Let $\phi \in C^{\infty}_0([0,T] \times \R^n)$ such that $\phi(T,x)=0$ for every $x \in \R^n$. Then, 
\begin{align*}
      \int_0^T \int_{\R^n}- u_i \,\partial_t \phi +  A_i \nabla u_i \cdot \nabla \phi - u_i\, v_i \cdot \nabla \phi \, \d x \d t = \int_{\R^n} g_i \, \phi \, \d x + \int_0^T \int_{\R^n} R_i \, \phi \, \d x \d t.
\end{align*}
Since $u_i$ converges to $u$ pointwise a.e. in $[0,T] \times \R^n$ and weakly in $\L^2([0,T],W^{1,p}_{\loc}(\R^n))$ for $1<p < \frac{n}{n-1},$ up to a subsequence depending on $p$, we derive that
\begin{align*}
    \int_0^T \int_{\R^n} -u \, \partial_t \phi + A \nabla u \cdot \nabla\phi -u\, v \cdot \nabla \phi  \, \d x \d t = \int_{\R^n} g \, \phi \, \d x + \int_0^T \int_{\R^n} R \, \phi \, \d x \d t.
\end{align*}
Thus, $u$ is a weak solution.
 Finally, we prove that $u$ satisfies the conservation of mass if \eqref{eq:derviassum} holds. Let take $F \subset [0,T]$ defined in Step 6 and $s \in [0,T] \setminus F$. Assume that $\psi_i \in C^{\infty}_0(\R^n)$ is the sequence of functions in \eqref{eq:spectestfunc}. %satisfying 
%\begin{align*}
 %   \Psi_i = 1, \quad \text{ in } B(0,i),\\
 %   |\nabla^k \Psi_i| \leq \frac{C}{i^k}, \quad \text{ in } \R^n,
%\end{align*}
%for integer $0 \leq k \leq 2$ and a constant $C>0$. 
Define  $$\theta_i(r) := \frac{i}{2} \int_{r}^T 1_{[s-\frac{1}{i},s+\frac{1}{i}]} (t) \, \d t,$$ for every $r \in [0,T].$ Then, by \eqref{eq:derviassum}, we have
\begin{equation*}
    f_i :=  -\partial_t (\theta_i \, \psi_i ) - \div A^* \nabla (\theta_i \, \psi_i) - v \cdot  \nabla (\theta_i \, \psi_i ) \in \L^{\infty}([0,T] \times \R^n) \cap \L^2([0,T] \times \R^n),
\end{equation*}
weakly in $[0,T] \times \R^n$ and $\theta_i \,\psi_i (T,x)=0$ for every $x \in \R^n.$ Hence,
\begin{align*}
   \int_{0}^T \int_{\R^n} f_i\,  u \, \d x \d t = \int_{\R^n} \theta_i \, \psi_i \,  g \, \d x \bigg |_0 + \int_{0}^T \int_{\R^n} \theta_i \, \psi_i \, R \, \d x \d t.
 \end{align*}
Taking $i \to \infty$ above and using Lebesgue's dominated convergence and Lebesgue's density theorems, we derive that 
\begin{align*}
    \int_{\R^n} u \, \d x \, \bigg |_s = \int_{\R^n} g  \, \d x + \int_0^s \int_{\R^n} R \, \d t  \d x, 
\end{align*}
which completes the proof.

\end{proof}

%\begin{prop}
 %   Let $T>0 , g_1,g_2 \in \L^1(\R^n)$, and $R_1,R_2 \in \L^1([0,T] \times \R^n)$. Assume that $g_1|_{B}=g_2|_{B}, R_1|_{B \times [0,T]}=R_2|_{B \times [0,T]}$ for a ball $B \subset \R^n$. Then, $u_1|_{B \times [0,T]}= u_2 |_{B \times [0,T]}$ where $u_1,u_2$ are convection solutions of $\partial_t u_1 - \Delta u_1 - \div(vu_1) =R_1,\partial_t u_2 - \Delta u_2 - \div(vu_2) =R_2$ in $\R^n \times [0,T]$ with initial data $g_1,g_2$, respectively.
    
  %  \end{prop}

\begin{exm}
\label{exm:example}
   We use the idea in \cite{Prignet,Serrin} to demonstrate the non-uniqueness of weak solutions for coefficients that do not satisfy \eqref{eq:derviassum}. Assume that $a_{ij}(t,x)= \delta_{ij} + (a-1) \frac{x_i x_j}{|x|^2}$ for $(t,x) \in [0,T] \times \R^n$ and $n=2,3$, where the positive constant $a$ is to be determined later. Then, by a classical result of Serrin, see \cite{Serrin}, the function $u(x) = x_1 |x|^{1-n-\epsilon}$ is a weak solution for $-\div A \nabla u=0$ in $\R^n$, where $0<\epsilon<\frac{1}{2}$ and $a = \frac{n-1}{\epsilon (\epsilon + n -2)}.$ We note that $\nabla u  \notin W^{1,2}_{\loc}(\R^n)$ and $\nabla u  \in W^{1,p}_{\loc}(\R^n)$ for every $ p< \frac{n}{n+\epsilon-1}$. Now, let $\varphi \in C^{\infty}_0(\R^n)$ be a cut-off function such that $\varphi = 1$ in the unit ball $B(0,1)$ and define $R := -\div A \nabla (u\, \varphi)$. Since $-\div A \nabla u=0$ in $\R^n$ weakly and $A,u$ are smooth in $\R^n \setminus 0$, we obtain $R \in C^{\infty}_0(\R^n)$. If we define $g := u\, \varphi$, then $u\, \varphi$ is the weak solution of $\partial_t (u\, \varphi) -\div A \nabla (u\, \varphi) = R$ in $[0,T] \times \R^n$ with initial data $g$, and $u\, \varphi$ does not belong to $\L^2([0,T]; W^{1,2}_{\loc}(\R^n))$. Since $g \in \L^2(\R^n), R \in \L^2([0,T] \times \R^n)$, by variational method in \cite{LM} or Galerkin method in \cite{E}, there exists a solution $\Tilde{u}_i \in \L^2([0,T]; W^{1,2}_0(B(0,i)))$ which satisfies $\partial_t \Tilde{u}_i -\div A \nabla \Tilde{u}_i = R$ weakly in $[0,T] \times B(0,i)$ with initial data $g$. By standard regularity estimates, we derive 
$$a \|\Tilde{u}_i\|^2_{\L^2([0,T]; W^{1,2}(B(0,i)))} \leq \|R\|^2_{\L^2([0,T] \times B(0,i))}+ \|g\|^2_{\L^2(B(0,i))}.$$ Hence, up to a subsequence, $\Tilde{u}_i$ converges weakly to $\Tilde{u}$ in $\L^2([0,T]; W^{1,2}(\R^n))$ which satisfies $\partial_t \Tilde{u} -\div A \nabla \Tilde{u} = R$ weakly in $[0,T] \times \R^n$ with initial data $g$. Moreover, $w := u-\tilde{u}$ belongs to $\L^1([0,T];\W^{1,1}_{\loc}(\R^n))$ and satisfies $\partial_t w- \div A \nabla w=0$ weakly in $[0,T] \times \R^n$ with the initial data $0$ but $w \neq 0.$

\end{exm}

\section{Existence for drifts induced by potentials}
We assume that $V: \R^n \to \R$ is a measurable function, where $e^{V} + e^{-V} \in \L^1_{\loc}(\R^n)$ and 
$e^{-V}$ is a regular weight. The following examples are the main motivations:
\begin{exm}
    If $V \in \L^{\infty}_{\loc}(\R^n)$, then $e^{V} + e^{-V} \in \L^1_{\loc}(\R^n)$. To prove that $e^{-V}$ is a regular weight consider $f \in \H^1_{e^{-V}}(\R^n)$ and $\psi_i \in C^{\infty}_0(\R^n)$ constructed in \eqref{eq:spectestfunc}. Then,
\begin{align*}
    \|(1-\psi_i) f\|^2_{\H^1_{e^{-V}}(\R^n)}
    \leq \int_{\R^n} \biggl((1-\psi_i)^2 + \frac{C_1^2}{j^2} \biggr) f^2 + (1-\psi_i)^2 |\nabla f|^2 \, \d e^{-V}.
\end{align*}
Hence, by Lebesgue's dominated convergence $\lim_{i \to \infty} \|(1-\psi_i) f\|_{\H^1_{e^{-V}}(\R^n)}=0.$ Moreover, since $\psi_i f$ is compactly supported and $e^{-V}$ is comparable with Lebesgue measure on compact subsets of $\R^n$, there exists $f_{i} \in C^{\infty}_0(\R^n)$ such that $\|f_{i} - \psi_i f\|_{\H^1_{e^{-V}}(\R^n)} \leq \frac{1}{i}.$ In conclusion, by triangle inequality, we derive that $\lim_{i \to \infty} \|f_{i} - f\|_{\H^1_{e^{-V}}(\R^n)}=0$.

\end{exm}
\begin{exm}
   Let $V(x)= C \ln |x|$ for $x \in \R^n$ and a constant $-n<C<n$. Then, $e^{-V}$ belongs to Muckenhoupt weight $A_2(\R^n)$, see \cite[Ch. 15]{HKM}. Hence, $e^{V} + e^{-V} \in \L^1_{\loc}(\R^n)$ and, by \cite[Thm. 15.21]{HKM}, $e^{-V}$ is a regular weight. 
\end{exm}

We need the following concept of an adjoint convection solution.
\begin{defn}\label{def:dualsolpoten}
Let $f \in  \L^{\infty}([0,T] \times \R^n) \cap \L^2_{e^{-V}}([0,T] \times \R^n).$ We define the adjoint convection solution $\phi \in \L^2([0,T];\H^1_{e^{-V}}(\R^n)) \cap C([0,T];\L^2_{e^{-V}}(\R^n))$ of
\begin{align}
    \label{eq:dualsol2}
    -\partial_t \phi -e^V \div (e^{-V} \nabla \phi) =f,
\end{align}
in $[0,T] \times \R^n$ if  $\phi(T,x)=0$ for a.e. $x \in \R^n$ and
\begin{align*}
&\int_{\R^n} \phi \, \psi\, \d e^{-V} \bigg |_0  + \int_{0}^T \int_{\R^n} \phi \, \partial_t \psi \, \d e^{-V} \d t \\& + \int_{0}^T \int_{\R^n} \nabla \phi \cdot \nabla \psi \, \d e^{-V} \d t = \int_0^T \int_{\R^n} f\, \psi\, \d e^{-V} \d t, 
\end{align*}
for every $\psi \in C^{\infty}_0([0,T] \times \R^n ).$
\end{defn}

We need the following auxiliary lemma.
\begin{lem}
\label{lem:approxpoten}
    There exists a sequence $V_i \in C^{0,1}(\R^n) \cap C^{\infty}(\R^n)$ such that $\inf_{\R^n} V_i > -\infty$, $V_i$ converges pointwise to $V$, and $e^{V_i},e^{-V_i}$ converge to $e^V,e^{-V}$ in $\L^1(B)$, respectively, for every ball $B \subset \R^n.$ Moreover, if $e^{-V} \in \L^1(\R^n)$, then $e^{-V_i}$ converges to $e^{-V}$ in $\L^1(\R^n).$
\end{lem}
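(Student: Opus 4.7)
The plan is the standard truncate-mollify scheme with a diagonal selection. Observe first that $V \in \L^1_{\loc}(\R^n)$: from $s \leq e^s$ for $s \geq 0$ one gets $V^\pm \leq e^{V^\pm} \leq e^{\pm V} + 1$, which is locally integrable by hypothesis, so convolutions of $V$ are well-defined. For each integer $k \geq 1$ I define the bounded truncation
\[
\tilde V_k := \max(\min(V,k),-k),
\]
which satisfies $\|\tilde V_k\|_{\L^{\infty}} \leq k$ and $\tilde V_k \to V$ at every point where $V$ is finite. A three-case check on $\{V \leq -k\}, \{|V| \leq k\}, \{V \geq k\}$ yields the pointwise domination $e^{\pm \tilde V_k} \leq e^{\pm V} + 1 \in \L^1_{\loc}$, so Lebesgue dominated convergence gives $e^{\pm \tilde V_k} \to e^{\pm V}$ in $\L^1(B)$ for every ball $B$.

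Let $\varphi_\epsilon$ be a standard non-negative mollifier and put $W_{k,\epsilon} := \varphi_\epsilon \ast \tilde V_k \in C^\infty(\R^n)$. The $\L^\infty$ bound propagates to $|W_{k,\epsilon}| \leq k$ (so $\inf W_{k,\epsilon} \geq -k > -\infty$) and to $|\nabla W_{k,\epsilon}| \leq k\|\nabla \varphi_\epsilon\|_{\L^1}$, giving $W_{k,\epsilon} \in C^{0,1}(\R^n) \cap C^\infty(\R^n)$. For fixed $k$, Lebesgue's differentiation theorem yields $W_{k,\epsilon} \to \tilde V_k$ a.e.\ as $\epsilon \downarrow 0$, and the uniform pointwise bound $|e^{\pm W_{k,\epsilon}}| \leq e^k$ together with $|B| < \infty$ yields $e^{\pm W_{k,\epsilon}} \to e^{\pm \tilde V_k}$ in $\L^1(B)$ by dominated convergence. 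Choose $\epsilon_k \downarrow 0$ so that
\[
\|W_{k,\epsilon_k} - \tilde V_k\|_{\L^1(B(0,k))} + \|e^{\pm W_{k,\epsilon_k}} - e^{\pm \tilde V_k}\|_{\L^1(B(0,k))} < \tfrac{1}{k},
\]
and set $V_k := W_{k,\epsilon_k}$. Triangle inequality combines this with Step~1 to give $V_k \to V$ in $\L^1_{\loc}(\R^n)$ and $e^{\pm V_k} \to e^{\pm V}$ in $\L^1(B)$ for every ball $B$; a further subsequence, which I relabel, converges pointwise a.e.

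The last assertion is the delicate point: the construction just produced forces $V_k \leq k$, hence $e^{-V_k} \geq e^{-k}$ on all of $\R^n$, which cannot lie in $\L^1(\R^n)$. I would therefore modify the diagonal step by pasting in a Lipschitz barrier at infinity. Pick $R_k \to \infty$ slowly enough that $e^{-k}R_k^n \to 0$ (for instance $R_k := e^{k/(2n)}$), fix a smooth radial Lipschitz barrier $L_k(x) := k + 2k(|x|-R_k)^+$ smoothed near $|x| = R_k$, and glue $W_{k,\epsilon_k}$ and $L_k$ by a smooth partition of unity $1 = \eta_k + (1-\eta_k)$ with $\eta_k = 1$ on $B(0,R_k-1)$ and $\eta_k = 0$ outside $B(0,R_k)$. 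The resulting $V_k$ still lies in $C^{0,1}(\R^n) \cap C^\infty(\R^n)$, coincides with $W_{k,\epsilon_k}$ on $B(0,R_k-1)$ so the local convergences and a.e.\ pointwise convergence to $V$ are preserved (each $x$ eventually sits in $B(0,R_k-1)$), and outside $B(0,R_k)$ the barrier $L_k$ makes $e^{-V_k}$ decay like $e^{-2k|x|}$ with $\|e^{-L_k}\|_{\L^1(\R^n)} \to 0$. Under $e^{-V} \in \L^1(\R^n)$, the Jensen inequality $e^{-\varphi_\epsilon \ast f} \leq \varphi_\epsilon \ast e^{-f}$ combined with Young's convolution inequality gives $\|e^{-W_{k,\epsilon_k}}\|_{\L^1(B(0,R_k))} \leq \int_{B(0,R_k)} e^{-\tilde V_k}\,\d x$, and splitting $\{V \leq -k\}, \{|V|<k\}, \{V \geq k\}$ bounds the last term by $e^{-k}R_k^n \to 0$ while the middle converges to $\|e^{-V}\|_{\L^1(\R^n)}$; together with the vanishing barrier mass, this yields $\limsup \|e^{-V_k}\|_{\L^1(\R^n)} \leq \|e^{-V}\|_{\L^1(\R^n)}$. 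Fatou's lemma supplies the matching $\liminf$, hence $\|e^{-V_k}\|_{\L^1} \to \|e^{-V}\|_{\L^1}$, and Scheffé's lemma upgrades a.e.\ convergence plus convergence of total mass to $\L^1(\R^n)$ convergence. The main obstacle I anticipate is choosing the parameters $(k,\epsilon_k,R_k)$ coherently so that a single sequence $V_k$ realizes every claimed convergence simultaneously in both regimes, rather than having to produce two separate sequences.
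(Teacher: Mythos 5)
Your proof is correct in substance but follows a genuinely different route from the paper's. The paper truncates $V$ \emph{in space} (it mollifies $V\,1_{B(0,1/\theta)}$ and adds the globally coercive term $\delta\sqrt{|x|^2+1}$), and the price it pays is that $\varphi_\epsilon\ast(|V|1_{B(0,1/\theta)})$ is unbounded, so controlling $\int_B e^{\varphi_\epsilon\ast(|V|1)}$ requires expanding the exponential as a power series, applying Young's convolution inequality term by term, and then combining Fatou with the Riesz--Scheff\'e lemma; the three parameters $(\epsilon,\theta,\delta)$ are then sent to zero in stages. You instead truncate \emph{in the range} ($\tilde V_k=\max(\min(V,k),-k)$), which makes the mollification step trivial (uniform bound $e^{\pm W_{k,\epsilon}}\le e^{k}$ and plain dominated convergence on balls) at the cost of destroying integrability of $e^{-V_k}$ at infinity, which you repair with a glued Lipschitz barrier; your $\limsup$ bound then comes from Jensen's inequality $e^{-\varphi_\epsilon\ast f}\le\varphi_\epsilon\ast e^{-f}$ rather than the power-series trick, and you close with Fatou plus Scheff\'e, ending with essentially the same Scheff\'e-type upgrade the paper uses locally. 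Two details should be made explicit to make the global step airtight: (a) on the transition annulus $B(0,R_k)\setminus B(0,R_k-1)$ you need the convexity bound $e^{-\eta_k W-(1-\eta_k)L_k}\le \eta_k e^{-W}+(1-\eta_k)e^{-L_k}$, since the crude estimate $e^{-V_k}\le e^{k}$ there multiplied by the annulus volume $\sim R_k^{\,n-1}$ diverges with your choice $R_k=e^{k/(2n)}$; and (b) Young's inequality yields $\int_{B(0,R_k)}\varphi_{\epsilon_k}\ast e^{-\tilde V_k}\le\int_{B(0,R_k+\epsilon_k)}e^{-\tilde V_k}$ on a slightly enlarged ball, which is harmless but should be stated. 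With those two lines added, your construction delivers every property the paper actually uses downstream ($\inf V_i>-\infty$, global Lipschitz bounds, local $\L^1$ convergence of $e^{\pm V_i}$, and global $\L^1$ convergence of $e^{-V_i}$ when $e^{-V}\in\L^1$), so it is a valid and arguably more elementary substitute for the paper's argument.
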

\begin{proof}
    Let $\varphi_{\epsilon}$ be a positive approximation of identity, where $\epsilon$ is a positive constant.  Define $V_{\epsilon,\theta,\delta}(x) :=  \varphi_{\epsilon} \ast (V 1_{B(0,\frac{1}{\theta})})(x) + \delta \sqrt{|x|^2+1} \in C^{0,1}(\R^n) \cap C^{\infty}(\R^n)$ for every $x \in \R^n$, where $\theta,\delta$ are positive constants. Then, $\inf_{\R^n} V_{\epsilon,\theta,\delta} > -\infty.$ Since $e^V+ e^{-V} \in \L^1_{\loc}(\R^n)$, we obtain that $e^{|V|} \in \L^1_{\loc}(\R^n).$ Hence, by $ \frac{|V|^n}{n!} \leq e^{|V|}$ for every positive integer $n$, we derive that $V \in \L^{p}_{\loc}(\R^n)$ for every $p>0.$ Now, we prove that \begin{align}
    \label{eq:locconverg}
    \lim_{(\epsilon,\theta,\delta) \to 0}  \|e^{V_{\epsilon,\theta,\delta}} -e^{V}\|_{\L^1(B)} +  \|e^{-V_{\epsilon,k,\delta}} -e^{-V}\|_{\L^1(B)} =0,
        \end{align} for every ball $B \subset \R^n.$ Let us fix a ball $B \subset \R^n.$ Since $B$ is bounded, 
        $$e^{V_{\epsilon,\theta,\delta}} + e^{-V_{\epsilon,k,\delta}} \bigg |_B \leq C_B e^{\varphi_{\epsilon} \ast (|V| 1_{B(0,\frac{1}{\theta})})} \bigg |_B,$$ for constant $C_B$ depending on $B.$
    By a variant of Lebesgue's dominated convergence, see \cite[Thm. 1.20]{EG}, it is enough to prove that
\begin{align}
\label{eq:upperlimconv}
    \lim_{(\epsilon,\theta) \to 0}\|e^{\varphi_{\epsilon} \ast (|V| 1_{B(0,\frac{1}{\theta})})} - e^{|V|}\|_{\L^1(B)} =0.
\end{align}
By Young's convolution inequality, we obtain 
    \begin{align*}
        \int_{B} |\varphi_{\epsilon} \ast (|V| 1_{B(0,\frac{1}{\theta})})|^p  \, \d x \leq \int_{\R^n} |\varphi_{\epsilon} \ast (|V| 1_{(1+\epsilon)B})|^p  \, \d x  \leq  \int_{(1+\epsilon )B} |V|^p \, \d x.
    \end{align*}
    for every $p \geq 1.$  Hence, by the monotone convergence theorem, it is obtained that
\begin{align*}
    \int_{B} e^{\varphi_{\epsilon} \ast (|V| 1_{B(0,\frac{1}{\theta})})} \, \d x &= \sum_{j=1}^{\infty} \int_{B} \frac{|\varphi_{\epsilon} \ast (|V| 1_{B(0,\frac{1}{\theta})})|^j}{j!} \, \d x  \\&  \leq \sum_{j=1}^{\infty} \int_{(1+\epsilon)B} \frac{|V |^j}{j!} \, \d x = \int_{(1+\epsilon)B} e^{|V|} < \infty.
\end{align*}
Also, by Fatou's lemma, we have
\begin{align*}
   \int_{B} e^{|V|} \, \d x \leq \liminf_{(\epsilon,\theta) \to 0} \int_{B} e^{\varphi_{\epsilon} \ast (|V| 1_{B(0,\frac{1}{\theta})})} \, \d x.
\end{align*}
Hence, $\lim_{(\epsilon,\theta) \to 0} \int_{B} e^{\varphi_{\epsilon} \ast (|V| 1_{B(0,\frac{1}{\theta})})} \, \d x =  \int_{B} e^{|V|} \, \d x$. Then, since $e^{\varphi_{\epsilon} \ast (|V| 1_{B(0,\frac{1}{\theta})})} $ converges ponitwise to $e^{|V|}$, by Riesz-Scheff\'e-Brezis–Lieb lemma we obtain $e^{\varphi_{\epsilon} \ast (|V| 1_{B(0,\frac{1}{\theta})})}$ converges to $e^{|V|}$ in $\L^1(B)$ as $(\epsilon,\theta)$ goes to $0$. This completes the proof of \eqref{eq:locconverg}. Now, if $e^{-V} \in \L^1(\R^n)$, we construct a sequence of $\epsilon_i,\theta_i,\delta_i$ converging to zero such that
\begin{align}
\label{eq:potentlimit}
     \lim_{i \to \infty} \|e^{-V_{\epsilon_i,\theta_i,\delta_i}} -e^{-V}\|_{\L^1(\R^n)} =0
\end{align}
Since $e^{-V_{\epsilon,\theta,\delta}} \leq 1_{B(0,\frac{1+\epsilon}{\theta})} e^{\varphi_{\epsilon} \ast (|V| 1_{B(0,\frac{1}{\theta})})}  + e^{-\delta \sqrt{|x|^2+1}}$ and $\int_{\R^n} e^{-\delta \sqrt{|x|^2+1}} < \infty$, by \eqref{eq:upperlimconv} and a variant of Lebesgue's dominated convergence, see \cite[Thm. 1.20]{EG}, we have
\begin{align}
\label{eq:firstlimit}
    \lim_{\epsilon \to 0} \|e^{-V_{\epsilon,\theta,\delta}}-e^{-V 1_{B(0,\frac{1}{\theta})} - \delta \sqrt{{|x|^2}+1}}\|_{\L^1(\R^n)} =0.
\end{align}
    Also, by Lebesgue's dominated convergence and $e^{-V 1_{B(0,\frac{1}{\theta})} - \delta \sqrt{{|x|^2}+1}} \leq e^{-V}+ e^{- \delta \sqrt{{|x|^2}+1}}$, we deduce
\begin{equation}
\begin{aligned}
\label{eq:secondlimit}
  &  \lim_{\theta \to 0} \|e^{-V 1_{B(0,\frac{1}{\theta})} - \delta \sqrt{{|x|^2}+1}}-e^{-V- \delta \sqrt{{|x|^2}+1}}\|_{\L^1(\R^n)} =0,\\
    &\lim_{\delta \to 0} \|e^{-V - \delta \sqrt{{|x|^2}+1}}-e^{-V}\|_{\L^1(\R^n)} =0.
\end{aligned}
  \end{equation}
In conclusion, by triangle inequality, \eqref{eq:firstlimit}, and \eqref{eq:secondlimit}, there exists a sequence $(\epsilon_i,\theta_i,\delta_i)$ converging to $0$ such that \eqref{eq:potentlimit} holds.
  
\end{proof} For simplicity, define the weights $$w_i := e^{-V_i},\quad w :=e^{-V},$$ where $V_i$ is the sequence in Lemma \ref{lem:approxpoten}. We now construct an approximating sequence of weak solutions for \eqref{eq:dualsol2}.
\begin{prop}
\label{prop:advecsol}
For every $f \in \L^{\infty}([0,T] \times \R^n)\cap \L^2([0,T];\L^2_w( \R^n))$, there exists a unique adjoint convection solution $\phi \in \L^2([0,T];\H^1_w(\R^n)) \cap C([0,T];\L^2_{w}(\R^n))$ of \eqref{eq:dualsol2} in $[0,T] \times \R^n$.  Also, $\phi$ satisfies the following:\\
(i). For every uniformly bounded sequence $f_i \in \L^{\infty}([0,T] \times \R^n)\cap \L^2([0,T];\L^2_w( \R^n))$ which converges to $f$ in $\L^2_w([0,T] \times \R^n)$, there exists a sequence $\phi_i$ belonging to $\L^2([0,T];\H^1_w(\R^n)) \cap C([0,T];\L^2_{w}(\R^n))$ which are adjoint convection solutions of
    \begin{align}
    \label{eq:approxdualpotential}
           -\partial_t \phi_i - e^{V_i} \div( e^{-V_i} \nabla \phi_i) =f_i,
    \end{align} in $[0,T] \times \R^n$, and
    $\phi_i$ converges to $\phi$ in $C([0,T];\L^1_{\loc}(\R^n))$.
    \\(ii).
    \begin{align*}
 &   (T-t) \inf f  \leq \phi(t,\cdot) \leq  (T-t) \sup f,\\
& \sup_{0 \leq t \leq T} \frac{1}{p} \|\phi(t,\cdot)\|^p_{\L^{p}_{w}(\R^n)} + \|
 \phi^{\frac{p-2}{2}}\nabla \phi \|^2_{\L^2_{w}( [0,T] \times \R^n)} \leq  
 \frac{e^{(p-1)T}}{p}\,  \|f\|^p_{\L^{p}_{w}([0,T] \times \R^n)},\\ 
 &\sup_{0\leq t \leq T-h}\|\tau_h \phi(t,\cdot) - \phi(t,\cdot)\|^2_{\L^2_w(\R^n)} \leq e^T\biggl(
\|\tau_h f-f \|^2_{\L^2_{w}([0,T-h] \times \R^n)}+ \int_{T-h}^T \int_{\R^n} |f|^2 \, \d w \biggr),
    \end{align*}
for every $p>1$, where $f \in \L^{p}_{w}(\R^n)$, and $0<h<T$. 
\end{prop}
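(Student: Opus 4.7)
I would proceed by approximation using the potentials $V_i$ from Lemma \ref{lem:approxpoten}: because $\inf V_i > -\infty$ and $V_i \in C^{\infty}(\R^n) \cap C^{0,1}(\R^n)$, the weight $w_i = e^{-V_i}$ is bounded and locally bounded below by a positive constant. Rewriting \eqref{eq:approxdualpotential} as the divergence-form backward parabolic equation $-w_i\, \partial_t \phi_i - \div(w_i \nabla \phi_i) = w_i f_i$ with Lipschitz coefficients, classical theory \cite{A} yields a unique smooth $\phi_i \in \L^2([0,T]; \H^1(\R^n))$ with $\phi_i(T,\cdot) = 0$ for an approximating source sequence $f_i$.

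\textbf{A priori estimates.} The two-sided pointwise bound on $\phi_i$ follows by applying the maximum principle to $\phi_i - (T-t)\sup f_i$ (whose source $f_i - \sup f_i$ is nonpositive and whose terminal value is zero). For the $\L^p_{w_i}$ estimate, I would test \eqref{eq:approxdualpotential} against $|\phi_i|^{p-2}\phi_i$; the crucial cancellation $e^{V_i} w_i \equiv 1$ makes the second-order term integrate by parts cleanly to $(p-1)\int|\phi_i|^{p-2}|\nabla\phi_i|^2\,\d w_i$, while the $\partial_t$ term contributes $-\tfrac{1}{p}\partial_t\|\phi_i(t)\|^p_{\L^p_{w_i}}$. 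Young's inequality $|f_i||\phi_i|^{p-1} \le \tfrac{1}{p}|f_i|^p + \tfrac{p-1}{p}|\phi_i|^p$ combined with a backward Gr\"{o}nwall argument delivers the claimed $e^{(p-1)T}$ factor. The time-shift estimate comes from subtracting the equations for $\phi_i$ and $\tau_h\phi_i$ on $[0,T-h]$ and testing by $\tau_h\phi_i - \phi_i$ in the $\L^2_{w_i}$ energy identity; the boundary term $\phi_i(T)-\phi_i(T-h) = -\phi_i(T-h)$ is then controlled by applying the $\L^2_{w_i}$ estimate on the shorter interval $[T-h, T]$, which produces the term $\int_{T-h}^T|f_i|^2\,\d w_i$.

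\textbf{Limit passage.} The uniform $\L^{\infty}$ bound together with the $\L^1_{\loc}$ (and, along a subsequence, pointwise) convergence $w_i \to w$ from Lemma \ref{lem:approxpoten} and the uniform $\L^2$ bound on $w_i^{1/2}\nabla\phi_i$ permit extraction of weak limits with $w^{1/2}\nabla\phi_i \rightharpoonup w^{1/2}\nabla\phi$ and $\phi \in \L^2([0,T];\H^1_w(\R^n))$; the regular-weight hypothesis $\H^1_w = W^{1,2}_w$ is exactly what ensures the weak limit lies in $\H^1_w$. A local bound on $\partial_t\phi_i$ in a negative-order Sobolev norm derived from the equation, combined with the compactness result \cite[Thm. 1]{P1}, yields strong convergence in $C([0,T]; \L^1_{\loc}(\R^n))$; the spatial tail argument at the end of the proof of Proposition \ref{prop:dualsol} (now carried out with the weight $w$) then upgrades this to $C([0,T]; \L^2_w(\R^n))$. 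The estimates in (ii) survive the limit by lower semicontinuity.

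\textbf{Uniqueness and main obstacle.} For uniqueness, given a homogeneous solution $\phi$ with $\phi(T,\cdot) = 0$, I would mollify $\phi$ in time to obtain $\phi_\eps$ solving the same homogeneous equation classically on $[\eps, T-\eps]$, test against $\phi_\eps \chi_R^2$ where $\chi_R \in C^{\infty}_0(\R^n)$ satisfies $1_{B(0,R)} \le \chi_R \le 1_{B(0,2R)}$ and $|\nabla\chi_R| \le C/R$, use the cancellation $e^V w \equiv 1$ to integrate the divergence term by parts producing $\int w \chi_R^2 |\nabla\phi_\eps|^2 + 2\int w \chi_R \phi_\eps \nabla\phi_\eps \cdot \nabla\chi_R$, absorb the cross-term via Young's inequality, and send $\eps \to 0$ followed by $R \to \infty$; the remainder $\int w|\phi|^2|\nabla\chi_R|^2 \le (C/R^2)\|\phi\|^2_{\L^2_w}$ vanishes because $\phi \in \L^2([0,T];\L^2_w(\R^n))$. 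The principal obstacle is the weighted limit passage: since $w_i$ may degenerate to zero exactly where $w$ is small, standard Sobolev compactness is not available, and one must rely on the precise pointwise and $\L^1_{\loc}$ convergence from Lemma \ref{lem:approxpoten} together with the regular-weight property in order to correctly identify the gradient in the limit.
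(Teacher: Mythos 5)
Your overall architecture coincides with the paper's: approximate with the Lipschitz potentials $V_i$ of Lemma \ref{lem:approxpoten}, obtain smooth solutions $\phi_i$ from \cite{A}, prove the pointwise bound by the maximum principle applied to $\phi_i-(T-t)\sup f_i$, derive the $\L^p_{w_i}$ estimate by testing with $|\phi_i|^{p-2}\phi_i\, w_i$ and Young--Gr\"{o}nwall, obtain the time-shift estimate by testing the equation for $\tau_h\phi_i-\phi_i$ and controlling the terminal term $-\phi_i(T-h,\cdot)$ with the energy estimate on $[T-h,T]$, pass to the limit using the convergence of $w_i^{\pm 1/2}$ to $w^{\pm1/2}$ in $\L^2_{\loc}$ to identify $w^{1/2}\nabla\phi$, and prove uniqueness through the energy identity justified by the regular-weight (density) hypothesis. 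Your cutoff-based uniqueness argument is a cosmetically different but valid route to the same identity; the paper instead approximates $\eta_{\epsilon}\ast\phi$ globally by $C^{\infty}_0$ functions (using regularity of the weight) and then uses piecewise-linear time cutoffs $\beta_i$, which avoids the $R\to\infty$ step since $\phi\in\L^2([0,T];\L^2_w(\R^n))$ already.

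The one step that would fail as written is your compactness mechanism. You propose ``a local bound on $\partial_t\phi_i$ in a negative-order Sobolev norm derived from the equation, combined with \cite[Thm. 1]{P1}''. In the unweighted Proposition \ref{prop:dualsol} this works because the drift $v_i$ is uniformly bounded, so $\partial_t\phi_i$ is uniformly bounded in $\L^2([0,T];\H^{-1})+\L^1([0,T];\L^1_{\loc})$. Here the drift is $\nabla V_i$, whose Lipschitz constants necessarily blow up as $i\to\infty$ (since $V$ need not be Lipschitz), and the only uniform gradient control available is on $w_i^{1/2}\nabla\phi_i$ in $\L^2$; consequently $\nabla V_i\cdot\nabla\phi_i$ admits no uniform bound in any unweighted local negative-order space, and the cited compactness result does not apply. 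The paper's substitute---which you can adopt at no extra cost, since you already proved the required estimate---is to use the time-shift bound together with Cauchy--Schwarz and the factor $w_i^{-1}(B)$ to get $\lim_{h\to 0}\sup_i\|\tau_h\phi_i-\phi_i\|_{\L^1([0,T-h]\times B)}=0$, combine this with the relative compactness in $\L^1(B)$ of the time averages $\int_{t_1}^{t_2}\phi_i\,\d t$ (again Cauchy--Schwarz plus Sobolev embedding), and invoke Simon's criterion \cite[Thm. 1]{Simon} to conclude relative compactness in $C([0,T];\L^1(B))$. Likewise, membership in $C([0,T];\L^2_w(\R^n))$ follows directly from Fatou's lemma applied to the time-shift estimate, with no need for a weighted spatial-tail argument. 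Finally, part (i) asserts convergence of the \emph{whole} sequence for an arbitrary admissible approximation $f_i$; this requires the closing observation that the limit is independent of the subsequence by uniqueness, which you should state explicitly.
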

\begin{proof}
For the uniqueness, it is enough to prove that $0$ is the only adjoint convection solution of 
\eqref{eq:dualsol2} if $f=0.$ Let $\phi \in \L^2([0,T];\H^1_{w}(\R^n)) \cap C([0,T];\L^2_{w}(\R^n))$ be an adjoint convection solution of \begin{align}
\label{eq:zerodualsol}
    -\partial_t \phi -e^V \div (e^{-V} \nabla \phi) =0,
\end{align} in $[0,T] \times \R^n$. We assume that $0<\epsilon<T$, $\eta_{\epsilon} \in C^{\infty}_0(\R)$ is an approximation of identity, and $\beta \in C^{\infty}_0((\epsilon,T-\epsilon))$. Then, for every $\psi \in C^{\infty}_0([0,T] \times \R^n)$ we have 
\begin{align*}
   \int_{0}^T \int_{\R^n} \phi \, \partial_t (\eta_{\epsilon} \ast  (\beta \psi))\, \d w \d t  + \int_{0}^T \int_{\R^n} \nabla \phi \cdot \nabla (\eta_{\epsilon} \ast  (\beta \psi)) \, \d w \d t = 0,
\end{align*}
where we used $\eta_{\epsilon}\ast (\beta \psi)(t,x)=0$ if $t=0,T$ and $x \in \R^n.$ Hence, 
\begin{align*}
 &\int_{0}^T \int_{\R^n} - \beta \, \psi \, \partial_t (\eta_{\epsilon} \ast \phi)  \beta \, \psi \, \d w \d t \\& + \int_{0}^T \int_{\R^n} \beta\, \nabla (\eta_{\epsilon} \ast \phi) \cdot \nabla  \psi \, \d w \d t =0
\end{align*}
for every $\psi \in C^{\infty}_0([0,T] \times \R^n)$. Since $w$ is a regular weight and $\phi \in \L^2([0,T];\H^1_w(\R^n))$, we can take an approximation of $\eta_{\epsilon} \ast \phi$ by elements in $C^{\infty}_0([\epsilon,T-\epsilon] \times \R^n)$ in $\L^2([\epsilon,T-\epsilon]; \H^1_w(\R^n))$ to obtain 
\begin{align*}
  &  \int_{0}^T \int_{\R^n} - \beta \,  \partial_t (\eta_{\epsilon} \ast \phi) \,  \eta_{\epsilon} \ast \phi \, \d w \d t \\& + \int_{0}^T \int_{\R^n} \beta \, \nabla \eta_{\epsilon} \ast \phi \cdot \nabla  \eta_{\epsilon} \ast \phi \, \d w \d t =0.
\end{align*}
Thus,
\begin{align*}
   \frac{1}{2}  \int_{0}^T \int_{\R^n}  (\eta_{\epsilon} \ast \phi)^2  \partial_t \beta \, \d w \d t  + \int_{0}^T \int_{\R^n} \beta\, \nabla \eta_{\epsilon} \ast \phi \cdot \nabla  \eta_{\epsilon} \ast \phi \, \d w \d t =0.
\end{align*}
Letting $\epsilon \to 0$ in the above derives that
\begin{align}
\label{eq:zeroequation}
    \frac{1}{2} \int_{0}^T \int_{\R^n}   \phi^2  \, \partial_t \beta \, \d w \d t + \int_{0}^T \int_{\R^n} \beta \, |\nabla \phi|^2\, \d w \d t =0
\end{align}
for every $\beta \in C_0^{\infty}((0,T))$. By approximation and the assumption $\phi \in C([0,T]; \L^2_w(\R^n)) \cap \L^2([0,T]; \H^1_w(\R^n))$, we can assume that $\beta \in C^{0,1}([0,T])$ and $\beta(0)=\beta(T)=0$ in \eqref{eq:zeroequation}. Define 
\begin{align*}
    \beta_i(t) := \begin{cases}
        0 \quad  &0 \leq t \leq \frac{1}{i},\\
        i t-1 \quad &\frac{1}{i} \leq t \leq \frac{2}{i},\\
        1 \quad &\frac{2}{i} \leq t \leq 1-\frac{2}{i},\\
        i (T-t) -1 \quad & T-\frac{2}{i} \leq t \leq T-\frac{1}{i},\\
        0 \quad & T-\frac{1}{i} \leq t \leq T,
    \end{cases}
\end{align*}
for $i > \frac{1}{T} +1.$ Then, by definition $\beta_i \in C^{0,1}([0,T])$ and $\beta_i(0)=\beta_i(T)=0$. Hence, by using $\phi \in C([0,T]; \L^2_w(\R^n)) \cap \L^2([0,T]; \H^1_w(\R^n))$ and replacing $\beta $ by $\beta_i$ in \eqref{eq:zeroequation}, we deduce
\begin{align*}
   &\int_{\R^n}   \phi^2 \d w \bigg |_0 - \int_{\R^n} \phi^2 \, \d w \bigg |_T + \int_{0}^T \int_{\R^n}  |\nabla \phi|^2 \, \d w \d t \\&= \lim_{i \to \infty}   \int_{0}^T \int_{\R^n}   \phi^2 \, \partial_t \beta_i \, \d w \d t + \int_{0}^T \int_{\R^n} \beta_i \, |\nabla \phi|^2\, \d w \d t =0.
\end{align*}
Hence, $\nabla \phi =0$ and $\phi$ is a function of time. By \eqref{eq:zerodualsol}, we imply that $-\partial_t \phi =0$ in $[0,T] \times \R^n$. Since $\phi(T,x)=0$ for a.e. $x \in \R^n,$ we conclude that $\phi =0.$
Now, to prove the existence part, let $f_i \in C^{\infty}_0([0,T] \times \R^n)$ where $f_i$ is a uniformly bounded sequence which converges to $f$ in $\L^2([0,T];\L^2_w( \R^n))$. By the standard theory of parabolic equations, see \cite{A}, there exists a smooth weak solution $\phi_i \in \L^2([0,T]; \H^1(\R^n))$ satisfying \eqref{eq:approxdualpotential} and $\phi_i(T,x)=0$ for every $x \in \R^n.$ Then, by Proposition \ref{prop:dualsol}, we have
\begin{align}
\label{eq:maximumprin}
(T-t) \inf f_i \leq \phi_i(t,\cdot) \leq (T-t) \sup f_i,    
\end{align}
for every $0 \leq t \leq T$. Let $0 \leq T' \leq T$. Using $\phi_i\, |\phi_i|^{p-2}\, w_i$, for $p>1$, as a test function gives
\begin{align*}
 \partial_t \frac{1}{p} \int_{\R^n} |\phi_i|^p \,  \d w_i \bigg |_t +  p
 \int_{\R^n} |\phi_i|^{p-2} |\nabla \phi_i|^2 \, \d w_i \d t \bigg |_t \leq  \int_{\R^n} |f_i| \,|\phi_i|^{p-1} \, \d w_i \bigg |_t.
 \end{align*}
Hence, by Young and Gr\"{o}nwall's inequality, we obtain
\begin{equation}
\label{eq:energyestim}
\begin{aligned} 
 &\sup_{T' \leq t \leq T} \frac{1}{p} \int_{\R^n} |\phi_i|^p \,\d w_i \bigg |_t +  p \,e^{-(p-1)T}
\int_{T'}^T \int_{\R^n} |\phi_i|^{p-2} |\nabla \phi_i|^2 \, \d w_i \d t \\& \leq \frac{e^{(p-1)T} }{p} \int_{T'}^T \int_{\R^n} |f_i|^{p} \, \d w_i \d t,
\end{aligned}
 \end{equation} for every $p>1.$ Now, by definition $\Phi_i := \tau_h \phi_i - \phi_i$ is the weak solution of 
\begin{align*}
     -\partial_t \Phi_i - \Delta \Phi_i + \nabla V_i \cdot \nabla \Phi_i =\tau_h f_i - f_i,
\end{align*}
in $[0,T-h] \times \R^n$ with $\Phi_i(T-h,x) = -\phi_i(T-h,x)$ for $x \in \R^n,$ where the shift operator $\tau_h$ is defined in Section \ref{sec:spaceandmeas}. Taking $\psi_i\, w_i$ as a test function and using Young's inequality as above, we obtain
\begin{align}
\label{eq:differeequ}
  \sup_{0 \leq t \leq T-h}  \int_{\R^n} \Phi_i^2 \, \d w_i \bigg |_t \leq e^T \int_0^{T-h} \int_{\R^n} |\tau_h f_i - f_i|^2\, \d w_i \d t + \int_{\R^n} \Phi_i^2(T-h,x)   \, \d w_i. 
\end{align}
Let $B \subset \R^n$ be a fixed ball and $0 \leq t_1 < t_2 \leq T$. Then, by Cauchy-Schwarz inequality and \eqref{eq:energyestim}, it is implied that
 \begin{align*}
    & \biggl ( \sup_{0 \leq t \leq T-h} \int_{B} |\tau_h \phi_i - \phi_i| \, \d x \bigg |_t  \biggr )^2 \leq  w_i^{-1}(B)  \sup_{0 \leq t \leq T-h} \int_{B} |\tau_h \phi_i - \phi_i| ^2 \, \d w_i \bigg |_t  \\& \leq  e^T w_i^{-1}(B) \biggl( \int_0^{T-h} \int_{\R^n} |\tau_h f_i - f_i|^2 \, \d w_i \d t +\int_{T-h}^T \int_{\R^n} |f_i|^2 \, \d w_i \d t  \biggr).
 \end{align*}
 In conclusion, $\lim_{h \to 0}\sup_{i} \|\tau_h \phi_i - \phi_i\|_{\L^1([0,T-h] \times B)} =0$. Moreover, by \eqref{eq:energyestim} and Cauchy-Schwarz inequality, we have
 \begin{align*}
  \biggl ( \int_{B}  \biggl|\nabla \int_{t_1}^{t_2} \phi_i \, \d t \biggr| \, \d x \biggr)^2 & \leq  (t_2-t_1) \, w_i^{-1}(B)\int_{B}  \int_{t_1}^{t_2} |\nabla \phi_i|^2 \, \d t  \d w_i \\& \leq T e^T  \, w_i^{-1}(B) \int_{B}  \int_{t_1}^{t_2} f_i^2 \, \d t  \d w_i
 \end{align*}
Thus, by Sobolev embedding, $\int_{t_1}^{t_2} \phi_i \, \d t$ is relatively compact in $\L^1( B)$. Now, by the classical result in \cite[Thm. 1]{Simon}, $\phi_i$ is relatively compact in $C([0,T]; \L^1(B)).$ Hence, up to a subsequence, $\phi_i$ converges to $\phi \in \L^{\infty}([0,T]\times \R^n) \cap C([0,T]; \L^1_{\loc}(\R^n))$ in $C([0,T]; \L^1_{\loc}(\R^n))$ and $\L^{\infty}([0,T] \times \R^n)$-weak*, where $\phi(T,x)=0$ for a.e. $x \in \R^n$. 
 By Lemma \ref{lem:approxpoten} and \eqref{eq:energyestim}, for every ball $B \subset \R^n$, we have $w_i^{\frac{1}{2}} \nabla \phi_i$ is uniformly bounded in $\L^2([0,T] \times \R^n,\R^n)$ and $w_i^{\frac{1}{2}}, w_i^{-\frac{1}{2}}$ converges to $w^{\frac{1}{2}}, w^{-\frac{1}{2}}$, respectively, in $\L^2_{\loc}(\R^n)$. Since, $\phi_i$ converges to $\phi$ in $\L^{\infty}([0,T] \times \R^n)$-weak*, we conclude that  
 $w_i^{\frac{1}{2}} \nabla \phi_i$ converges to $w^{\frac{1}{2}} \nabla \phi$ weakly in $\L^2([0,T] \times \R^n,\R^n)$. In fact,  if  $w_i^{\frac{1}{2}} \nabla \phi_i$ converges to $F=(F_1,\cdot \cdot \cdot, F_n)$ weakly in $\L^2([0,T] \times \R^n,\R^n)$, then
\begin{align*}
  \lim_{i \to \infty} \int_{0}^T \int_{\R^n} \Phi\,  w_i^{\frac{1}{2}} \partial_{x_j} \phi_i \, \d x \d t= \int_{0}^T \int_{\R^n} \Phi\, F_j \, \d x \d t,
\end{align*}
for every $1 \leq j \leq n$ and $\Phi \in \L^2([0,T] \times \R^n)$. Now, by Hölder's inequality and the convergence of $w_i^{-\frac{1}{2}}$ to $w^{-\frac{1}{2}}$ in $\L^2(\R^n)$, we obtain
\begin{align*}
  \lim_{i \to \infty} \int_{0}^T \int_{\R^n} w_i^{^{-\frac{1}{2}}} \varphi \cdot w_i^{\frac{1}{2}} \partial_{x_j} \phi_i \, \d x \d t= \int_{0}^T \int_{\R^n} \varphi \, F_j \, \d x \d t,
\end{align*}
for every $1 \leq j \leq n$ and $\varphi \in C^{\infty}_0([0,T] \times \R^n)$. Then, by the convergence of $\phi_i$ to $\phi$ in $\L^{\infty}([0,T] \times \R^n)$-weak*, we arrive at $F = w^{\frac{1}{2}} \nabla \phi.$ Moreover, using again \eqref{eq:energyestim} and Lemma \ref{lem:approxpoten}, we derive that $w_i^{\frac{1}{2}}\, \phi_i $ converges to $w^{\frac{1}{2}} \, \phi$ weakly in $\L^2([0,T] \times \R^n)$. Now, we prove that $\phi \in C([0,T]; \L^2_w(\R^n)).$
 By Fatou's lemma, \eqref{eq:energyestim}, and \eqref{eq:differeequ}, we obtain
 \begin{align*}
     \sup_{0 \leq t \leq T-h} \int_{\R^n} |\tau_h \phi - \phi|^2 \, \d w \bigg |_t
     \leq e^T \biggl(\int_0^{T-h} \int_{\R^n} |\tau_h f - f|^2 \, \d w \d t + \int_{T-h}^T \int_{\R^n} f^2 \, \d w \d t\biggr),
 \end{align*}
which proves that $\phi \in C([0,T];\L^2_w(\R^n)).$ Then, using \eqref{eq:approxdualpotential}, \eqref{eq:maximumprin}, and the weak convergence of $w_i^{\frac{1}{2}} \nabla \phi_i,w_i^{\frac{1}{2}} \phi_i $ to $w^{\frac{1}{2}} \nabla \phi,w^{\frac{1}{2}}\phi$, respectively, we imply that $\phi$ is a weak solution of \eqref{eq:dualsol2} with $\phi(T,x)=0$ a.e. in $\R^n$. Since $\phi$ is unique, we derive that $\phi_i$ converges to $\phi$ in $C([0,T];\L^1_{\loc}(\R^n))$ without any need to pass to a subsequence.  To complete the proof of part (i), we note that the proof above applies to any 
 $f_i \in \L^{\infty}([0,T] \times \R^n)\cap \L^2_w([0,T] \times \R^n)$ converging to $f$ in $\L^{\infty}([0,T] \times \R^n)$-weak* and $\L^2_w([0,T] \times \R^n)$. Finally, recall that, $\phi_i$ converges to $\phi$ in $C([0,T];\L^1_{\loc}(\R^n))$. Hence, part (ii) is obtained by applying Fatou's lemma in \eqref{eq:maximumprin},   \eqref{eq:energyestim}, and in \eqref{eq:differeequ}.
\end{proof}
The following lemma derives some properties of the adjoint convection solution if the right-hand side is time-independent. 
\begin{lem} \label{lem:timederiv}
Let $f \in C^{\infty}_0(\R^n)$ and $\phi$ be the adjoint convection solution of \eqref{eq:dualsol2} in the set $[0,T] \times \R^n$. Then, for every $0 \leq s \leq T$, $\phi^s(t,x) := \phi(t+ T-s,x)$, for $0 \leq t \leq s, x \in \R^n$, is the adjoint convection solution for
\begin{align*}
    -\partial_t \phi^s -e^V \div (e^{-V} \nabla \phi^s)= f,
\end{align*}
in $[0,s]  \times \R^n.$ Moreover, for every  weak solution $\phi_i \in C^{\infty}([0,T] \times \R^n) \cap \L^2([0,T];\H^1(\R^n))$ of $-\partial_t \phi_i - \Delta \phi_i + \nabla V_i \cdot \nabla \phi_i=0$ with $\phi_i(T,\cdot)=0$, we have $\partial_t \phi_i$ converges to $\partial_t \phi$ in $C([0,T];\L^1_{\loc}(\R^n))$, $\partial_t \phi(T,x) = -f(x)$ for a.e. $x \in \R^n,$ and 
\begin{align*}
    \|\partial_t \phi\|_{\L^{\infty}} \leq \|f\|_{\L^{\infty}}.
\end{align*}
\end{lem}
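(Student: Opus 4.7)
The first claim is essentially just time-translation invariance. Since $f$ does not depend on $t$ and the spatial operator $-e^V\div(e^{-V}\nabla\cdot)$ is autonomous, shifting $\phi$ by $T-s$ in time produces a solution of the same equation on $[0,s]\times\R^n$. Concretely, $\phi^s(s,x)=\phi(T,x)=0$ for a.e.\ $x$; the regularity $\phi^s\in\L^2([0,s];\H^1_w(\R^n))\cap C([0,s];\L^2_w(\R^n))$ is inherited from $\phi$ by restriction to $[T-s,T]$ and translation; and the integral identity of Definition \ref{def:dualsolpoten} on $[0,s]$ follows from the one for $\phi$ on $[0,T]$ by the substitution $t\mapsto t+T-s$ applied to any $\widetilde\psi(t,x):=\psi(t-T+s,x)$ with $\psi\in C^{\infty}_0([0,s]\times\R^n)$ (the shifted test function being compactly supported in $[T-s,T]\times\R^n\subset[0,T]\times\R^n$).

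For the second claim, the key observation is that when $f$ and $V_i$ are time-independent and $\phi_i$ solves the approximate equation with right-hand side $f$, differentiating in $t$ shows that $\psi_i:=\partial_t\phi_i$ is a classical solution of the homogeneous equation $-\partial_t\psi_i-e^{V_i}\div(e^{-V_i}\nabla\psi_i)=0$. Evaluating the original equation at $t=T$, combined with $\phi_i(T,\cdot)\equiv 0$ (which also forces $\nabla\phi_i(T,\cdot)\equiv 0$), yields the terminal condition $\psi_i(T,x)=-f(x)$ pointwise. The classical maximum principle for this smooth backward parabolic equation (equivalently, applied to the forward problem for $\widetilde\psi_i(t,x):=\psi_i(T-t,x)$) then gives $\inf(-f)\le\psi_i\le\sup(-f)$, hence the uniform bound $\|\psi_i\|_{\L^\infty}\le\|f\|_{\L^\infty}$.

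With this $\L^\infty$ bound in hand, I repeat the compactness argument from the proof of Proposition \ref{prop:advecsol} applied to $\psi_i$, now with fixed $i$-independent terminal data $-f\in C^{\infty}_0(\R^n)$ in place of zero terminal data. Testing the equation for $\psi_i$ against $\psi_i\,w_i$ delivers a uniform weighted $\L^2$ energy estimate; combined with the convergence $w_i\to w$ and $w_i^{-1}\to w^{-1}$ in $\L^1_{\loc}(\R^n)$ from Lemma \ref{lem:approxpoten}, this produces uniform bounds on $\nabla\psi_i$ in $\L^1([0,T]\times B)$ for every ball $B$, and the equation itself controls $\partial_t\psi_i$ in a suitable negative Sobolev space. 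Simon's compactness theorem \cite{Simon} then yields a subsequential limit $\psi\in C([0,T];\L^1_{\loc}(\R^n))$. The main obstacle is to identify $\psi$ with $\partial_t\phi$: this is accomplished by pairing both $\phi_i$ and $\psi_i=\partial_t\phi_i$ against an arbitrary $\varphi\in C^{\infty}_0((0,T)\times\R^n)$, integrating by parts in time, and passing to the limit using the $C([0,T];\L^1_{\loc})$-convergence $\phi_i\to\phi$ from Proposition \ref{prop:advecsol}. Once identified, uniqueness of the limit removes the need to extract subsequences, the bound $\|\partial_t\phi\|_{\L^\infty}\le\|f\|_{\L^\infty}$ follows from $\L^\infty$-weak$^*$ lower semicontinuity, and the $C([0,T];\L^1_{\loc})$-convergence at $t=T$ yields $\partial_t\phi(T,x)=-f(x)$ for a.e.\ $x$.
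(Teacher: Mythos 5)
Your overall strategy coincides with the paper's: differentiate the approximate equation in time, use $\phi_i(T,\cdot)\equiv 0$ to get the terminal value $\partial_t\phi_i(T,\cdot)=-f$ and the maximum principle bound, then run a weighted energy/compactness argument and identify the limit with $\partial_t\phi$ via the $C([0,T];\L^1_{\loc})$-convergence of $\phi_i$. The first claim is also handled in the same spirit, although the paper shifts the smooth approximants $\phi_i$ and invokes the stability in Proposition \ref{prop:advecsol}, whereas your direct substitution on $\phi$ itself silently requires knowing that the weak formulation of Definition \ref{def:dualsolpoten} localizes to the subinterval $[T-s,T]$ with the correct boundary term at $t=T-s$ (a test function $\psi\in C^\infty_0([0,s]\times\R^n)$ with $\psi(0,\cdot)\neq 0$ does not extend by zero to a smooth test function on $[0,T]$); this needs a cutoff-in-time argument as in the uniqueness part of Proposition \ref{prop:advecsol}, and is a minor but real omission.

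The substantive gap is in the compactness step for $\psi_i:=\partial_t\phi_i$. The compactness argument of Proposition \ref{prop:advecsol} that you propose to ``repeat'' rests on two inputs: relative compactness of the time averages $\int_{t_1}^{t_2}\psi_i\,\d t$ in $\L^1(B)$, and the uniform time equicontinuity $\lim_{h\to 0}\sup_i\|\tau_h\psi_i-\psi_i\|_{\L^1([0,T-h]\times B)}=0$. The second input is obtained there by an energy estimate for the difference $\tau_h\psi_i-\psi_i$, whose terminal value at $t=T-h$ is $\psi_i(T,\cdot)-\psi_i(T-h,\cdot)$; one must show $\int_{\R^n}|\psi_i(T-h)-\psi_i(T)|^2\,\d w_i\to 0$ as $h\to 0$ \emph{uniformly in} $i$. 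Testing the homogeneous equation for $\psi_i$ against $\psi_i\,w_i$, as you propose, only yields $\sup_t\int\psi_i^2\,\d w_i\leq\int f^2\,\d w_i$ and gives no uniform smallness of $\psi_i(T-h)-\psi_i(T)$ (attempting to bound it by the very quantity being estimated is circular). This is exactly why the paper introduces $\Phi_i:=\partial_t\phi_i-f$, which has \emph{zero} terminal data and solves an inhomogeneous equation with the $i$-uniformly controlled source $e^{V_i}\div(e^{-V_i}\nabla f)$, so that \eqref{eq:L2estim} gives $\int|\Phi_i(T-h)|^2\,\d w_i\leq h\int|\nabla f|^2\,\d w_i$. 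Your fallback remark that ``the equation itself controls $\partial_t\psi_i$ in a suitable negative Sobolev space'' also does not close the gap: $\nabla V_i$ is not uniformly bounded (only $V_i\in C^{0,1}$ for each fixed $i$), so $\nabla V_i\cdot\nabla\psi_i$ admits no $i$-uniform bound in an unweighted negative Sobolev space, and only the weighted divergence-form structure is uniformly controlled. The rest of your argument (identification of the limit by integration by parts in time, weak-$*$ lower semicontinuity for the $\L^\infty$ bound, and evaluation at $t=T$) is correct and matches the paper.
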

\begin{proof}
    The proof uses the argument in Proposition \ref{prop:advecsol}. Let $V_i$ be as in Lemma \ref{lem:approxpoten} and $\phi_i \in \L^2([0,T];\H^1(\R^n))$ be the smooth weak solution for 
\begin{align*}
    -\partial_t \phi_i -\Delta \phi_i + \nabla V_i \cdot \nabla \phi_i = f,
\end{align*}
in $[0,T] \times \R^n$ with $\phi_i(T,x)=0$ for every $x \in \R^n$. Then, $\phi^s_i(t,x) := \phi_i(t+ T-s,x)$ for $0 \leq t \leq s,x \in \R^n$ is the adjoint convection solution of \begin{align*}
    -\partial_t \phi^s_i - e^{V_i} \div(e^{-V_i} \nabla \phi^s_i) = f,
\end{align*}
in $[0,s] \times \R^n.$ Since $\phi^s_i$ converges pointwise to $\phi^s$, by Proposition \ref{prop:advecsol}, $\phi^s$ is the adjoint convection solution of \begin{align*}
    -\partial_t \phi^s  -e^V \div(e^{-V} \nabla \phi^s) = f,
\end{align*}  
    in $[0,s] \times \R^n.$ 
For the second part, we have
\begin{align}
\label{eq:timederiequa}
     -\partial_t (\partial_t \phi_i) -\Delta (\partial_t \phi_i) + \nabla V_i \cdot \nabla (\partial_t \phi_i) = 0,
\end{align}
weakly in $[0,T] \times \R^n$ and 
\begin{align}
\label{eq:timederivendpoin}
    \partial_t \phi_i(T,x) = -\Delta \phi_i(T,x) + \nabla V_i(x) \cdot \nabla \phi_i(T,x) -f(x) = -f(x)
\end{align}
for every $x \in \R^n.$ Hence, by the maximum principle, see \cite{A}, we arrive at
\begin{align}
    \label{eq:Linfboutimederiv}
    \|\partial_t \phi_i\|_{\L^{\infty}} \leq \|f\|_{\L^{\infty}}.
\end{align}
Define $\Phi_i:= \partial_t \phi_i -f$. Then, $\Phi_i$ is the weak solution of 
\begin{align}
\label{eq:mainderieq}
    -\partial_t \Phi_i -\Delta  \Phi_i + \nabla V_i \cdot \nabla \Phi_i = \Delta f-\nabla V_i \cdot \nabla f,
\end{align}
in $[0,T] \times \R^n$ with $\Phi_i(T,x)=0$ for every $x \in \R^n$. Hence, by taking $\Phi_i \, w_i $ as a test function for \eqref{eq:mainderieq}, we obtain that
\begin{align*}
    - \frac{1}{2}\partial_t \int_{\R^n} \Phi_i^2 \, \d w_i \bigg |_t + \int_{\R^n} |\nabla \Phi_i|^2 \, \d w_i = -\int_{\R^n}\nabla f \cdot \nabla \Phi_i \, \d w_i. 
\end{align*}
In conclusion, by Young's inequality, we derive that
\begin{align}
\label{eq:L2estim}
  \sup_{T' \leq t \leq T}  \int_{\R^n} |\Phi_i|^2 \, \d w_i \bigg |_t + \int_{T'}^T \int_{\R^n} |\nabla \Phi_i|^2 \, \d w_i \d t \leq \int_{T'}^T \int_{\R^n} |\nabla f|^2 \, \d w_i \d t,
\end{align}
for every $0 \leq T' \leq T$. Let $0<h < T$ and $\Psi^h_i := \tau_h \Phi_i - \Phi_i$. Then, $\Psi^h_i(T-h,x) = -\Phi_i(T-h,x)$ for every $x \in \R^n$ and 
\begin{align}
\label{eq:diffeqtimederi}
    -\partial_t \Psi^h_i - \Delta \Psi^h_i + \nabla V_i \cdot \nabla \Psi^h_i =0,
\end{align}
in $[0,T-h] \times \R^n.$ By taking the test function $\Psi_i^h \, w_i$ for \eqref{eq:diffeqtimederi} and using \eqref{eq:L2estim}, it is obtained that 
\begin{align*}
      \sup_{0 \leq t \leq T-h}  \int_{\R^n} |\Psi^h_i|^2 \, \d w_i \bigg |_t +  \int_{T'}^T \int_{\R^n} |\nabla \Psi^h_i|^2 \, \d w_i \d t &\leq  \int_{\R^n} |\Phi_i|^2 \, \d w_i \bigg |_{T-h}\\& \leq \int_{T-h}^T \int_{\R^n} |\nabla f|^2 \, \d w_i \d t.
\end{align*}
Hence, by Cauchy-Schwarz inequality, we have 
\begin{align*}
    \biggl |\int_0^{T-h} \int_{B} |\Psi^h_i| \, \d x \d t \biggr |^2 &\leq T w_i^{-1}(B) \int_0^{T-h} \int_{B} |\Psi^h_i|^2 \, \d w_i \d t\\ & \leq  T^2 w_i^{-1}(B) \int_{T-h}^T \int_{\R^n} |\nabla f|^2 \, \d w_i \d t,
\end{align*}
for every ball $B \subset \R^n.$
Therefore, $$\lim_{h \to 0}\limsup_{i \to \infty} \|\tau_h \Phi_i - \Phi_i\|_{\L^1([0,T-h] \times B)}=0,$$ for every ball $B \subset \R^n$. Moreover, by using Cauchy-Schwarz inequality and \eqref{eq:L2estim}, we deduce that 
\begin{align*}
    \biggl ( \int_{B}  \biggl|\nabla \int_{t_1}^{t_2} \Phi_i \, \d t \biggr| \, \d x \biggr)^2 & \leq  (t_2-t_1) \, w_i^{-1}(B)\int_{B}  \int_{t_1}^{t_2} |\nabla \Phi_i|^2 \, \d t  \d w_i \\& \leq  T \, w_i^{-1}(B) \int_{B}  \int_{t_1}^{t_2} |\nabla f|^2 \, \d t  \d w_i
\end{align*}
for every $0 \leq t_1 \leq t_2 \leq T$ and ball $B \subset \R^n.$ Then, for every $0 \leq t_1 \leq t_2 \leq T$, by Sobolev embedding, $\int_{t_1}^{t_2} \Phi_i \, \d t $ is relatively compact in $\L^1_{\loc}(\R^n)$. Hence, by the classical result in \cite[Thm. 1]{Simon}, $\partial_t \phi_i = \Phi_i + f$ is relatively compact in $C([0,T];\L^1_{\loc}(\R^n))$. Then, since $\phi_i$ converges to $\phi$ in $C([0,T];\L^1_{\loc}(\R^n))$ by Proposition \ref{prop:advecsol}, we obtain that $\partial_t \phi_i$ converges to $\partial_t \phi$ in $C([0,T];\L^1_{\loc}(\R^n))$. This together with \eqref{eq:timederivendpoin} and \eqref{eq:Linfboutimederiv} completes the proof.
    
\end{proof}

We also need the following $\L^1$-estimate lemma. \begin{lem}
\label{lem:upperL1bound}
    Let $f \in C_0([0,T] \times \R^n)$ and $\phi$ be the adjoint convection solution of \eqref{eq:dualsol2} in $[0,T] \times \R^n$. Then, 
\begin{align*}
    \sup_{0\leq t \leq T}\int_{\R^n} |\phi| \, \d w \bigg |_t \leq \int_0^T \int_{\R^n} |f| \, \d w \d t.
\end{align*}
    Moreover, if $w \in \L^1(\R^n)$, then
    \begin{align*}
         \int_{\R^n} \phi \, \d w \bigg |_s = \int_s^T \int_{\R^n} f \, \d w \d t,
    \end{align*}
    for every $0 \leq s \leq T.$
\end{lem}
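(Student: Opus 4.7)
The plan is to work with smooth approximations $\phi_i$ constructed in Proposition \ref{prop:advecsol} from the regularized potentials $V_i$ of Lemma \ref{lem:approxpoten} (taking smooth approximants $f_i \in C^{\infty}_0([0,T] \times \R^n)$ of $f$ when additional regularity on the source is needed to make $\phi_i$ classical), exploit the structural feature that each weight $w_i = e^{-V_i}$ belongs to $\L^1(\R^n)$ thanks to the $\delta_i\sqrt{|x|^2+1}$ summand in the definition of $V_i$, establish both assertions at the level of these approximants using a spatial cutoff $\chi_R$ as in \eqref{eq:spectestfunc}, and then pass to the limit using $\phi_i \to \phi$ in $C([0,T]; \L^1_{\loc}(\R^n))$ together with $w_i \to w$ in $\L^1_{\loc}(\R^n)$ (and in $\L^1(\R^n)$ when $w \in \L^1(\R^n)$, by Lemma \ref{lem:approxpoten}).

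For the first inequality I would regularize the absolute value by $\psi_{\eps}(r) = \sqrt{r^2+\eps^2}-\eps$, which satisfies $|\psi_{\eps}'|\leq 1$, $\psi_{\eps}''\geq 0$, and $\psi_{\eps}(r)\nearrow|r|$. Testing the strong equation $-\partial_t \phi_i - w_i^{-1}\div(w_i\nabla \phi_i) = f_i$ (which $\phi_i$ satisfies classically) against $\chi_R\,\psi_{\eps}'(\phi_i)\, w_i$ and integrating over $[s,T]\times\R^n$ yields, after integration by parts in space and using $\phi_i(T,\cdot)=0$,
\begin{align*}
\int_{\R^n}\chi_R\,\psi_{\eps}(\phi_i(s))\,\d w_i &+ \int_s^T\int_{\R^n}\chi_R\,\psi_{\eps}''(\phi_i)\,|\nabla\phi_i|^2\,\d w_i\,\d t\\
&= \int_s^T\int_{\R^n}\chi_R\,\psi_{\eps}'(\phi_i)\, f_i\,\d w_i\,\d t - \int_s^T\int_{\R^n}\psi_{\eps}'(\phi_i)\,\nabla\chi_R\cdot\nabla\phi_i\,\d w_i\,\d t.
\end{align*}
Dropping the nonnegative dissipative term and applying Cauchy--Schwarz to the cutoff-error term produces the bound $\tfrac{C}{R}\,\|w_i\|_{\L^1(\R^n)}^{1/2}\,\|\nabla\phi_i\|_{\L^2(\d w_i\,\d t)}$, which tends to $0$ as $R\to\infty$ by the energy estimate in Proposition \ref{prop:advecsol}(ii) and $w_i\in\L^1(\R^n)$. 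Then $\eps\to 0$ via monotone convergence gives $\int_{\R^n}|\phi_i(s)|\,\d w_i\leq \int_s^T\int_{\R^n}|f_i|\,\d w_i\,\d t$, and finally $i\to\infty$: Fatou on the LHS exploits pointwise a.e.\ convergence $\phi_i(s)\to\phi(s)$ (up to a subsequence) together with $w_i\to w$ in $\L^1_{\loc}(\R^n)$, while on the RHS the uniform compact support of $f_i$ combined with $f_i\to f$ uniformly and $w_i\to w$ in $\L^1_{\loc}(\R^n)$ ensures $\int\int |f_i|\,\d w_i\,\d t\to \int\int |f|\,\d w\,\d t$.

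For the second identity, smoothness of $\phi_i$ supplies the pointwise relation $\phi_i(s,\cdot) = \int_s^T[w_i^{-1}\div(w_i\nabla\phi_i)+f_i]\,\d t$; multiplying by $\chi_R w_i$, integrating, and using integration by parts produces
\[
\int_{\R^n} \chi_R\,\phi_i(s)\,\d w_i = -\int_s^T\int_{\R^n} \nabla\chi_R\cdot\nabla\phi_i\,\d w_i\,\d t + \int_s^T\int_{\R^n} \chi_R\,f_i\,\d w_i\,\d t.
\]
The cutoff-error term vanishes as $R\to\infty$ exactly as before, yielding $\int_{\R^n}\phi_i(s)\,\d w_i = \int_s^T\int_{\R^n} f_i\,\d w_i\,\d t$. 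Sending $i\to\infty$, the RHS converges by compact support of $f_i$, $f_i\to f$ uniformly, and $w_i\to w$ in $\L^1_{\loc}(\R^n)$; on the LHS I would invoke generalized dominated convergence, since $\phi_i(s)\to\phi(s)$ pointwise a.e.\ with $|\phi_i|\leq T\,\|f_i\|_{\L^{\infty}}$ uniformly in $i$, and $w_i\to w$ in $\L^1(\R^n)$ (which is precisely where the hypothesis $w\in\L^1(\R^n)$ is essential, via Lemma \ref{lem:approxpoten}) together imply $\phi_i(s)w_i\to\phi(s)w$ in $\L^1(\R^n)$. The principal delicacy throughout is the cutoff-error term $\int\nabla\chi_R\cdot\nabla\phi_i\,\d w_i$: without $w_i\in\L^1(\R^n)$ at each fixed $i$, the $R\to\infty$ step could not be executed cleanly, and the explicit $\delta_i\sqrt{|x|^2+1}$ growth built into $V_i$ by Lemma \ref{lem:approxpoten} is exactly the feature that supplies the needed global integrability and thereby decouples the (easy) $R$-limit from the (more subtle) $i$-limit.
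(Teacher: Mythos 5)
Your argument is correct and reaches both conclusions. The overall skeleton coincides with the paper's: approximate by the solutions $\phi_i$ of the regularized problems from Proposition \ref{prop:advecsol}, localize with a spatial cutoff, obtain the identity at the level of the approximants, and pass to the limit via Fatou's lemma (for the inequality) and a generalized dominated convergence using $w_i \to w$ in $\L^1(\R^n)$ (for the equality when $w \in \L^1(\R^n)$). The genuine difference lies in how the cutoff-error term is killed. The paper writes $\phi_i\,\div(w_i\nabla\psi_j) = (w_i\phi_i)\,\bigl(w_i^{-1}\div(w_i\nabla\psi_j)\bigr)$ and applies H\"older with an exponent $p<\tfrac{n}{n-1}$, using only $\sup_{\R^n}w_i<\infty$, the global Lipschitz bound on $V_i$, and the weighted $\L^p$ energy estimate of Proposition \ref{prop:advecsol}; the error then decays like $j^{\,n(p-1)/p-1}$. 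You instead observe that each $w_i=e^{-V_i}$ is globally integrable --- a correct reading of the $\delta\sqrt{|x|^2+1}$ summand in Lemma \ref{lem:approxpoten}, since $e^{-V_{\eps,\theta,\delta}}\le 1_{B(0,\frac{1+\eps}{\theta})}\,e^{\varphi_{\eps}\ast(|V|1_{B(0,\frac{1}{\theta})})}+e^{-\delta\sqrt{|x|^2+1}}$ --- and use Cauchy--Schwarz against the $\L^2_{w_i}$ gradient bound, so the error is $O(R^{-1})$. Your route also treats $|\phi|$ directly via the convex regularization $\sqrt{r^2+\eps^2}-\eps$, whereas the paper reduces to non-negative $f$ through the splitting $f=f^+-f^-$ and works with sign-definite solutions. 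Both mechanisms are valid; the paper's is marginally more robust in that it would survive approximating weights that are not globally integrable, while yours is more elementary and makes transparent exactly where the structure of the approximants from Lemma \ref{lem:approxpoten} enters.
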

\begin{proof}
   Without loss of generality, by decomposing $f = f^+ - f^-$, we can assume that $f$ is non-negative. Let $\phi_i$ be the approximating sequence in Lemma \ref{lem:approxpoten} and consider the test functions $\psi_j \, w_i$ where $\psi_j$ is defined in \eqref{eq:spectestfunc}. Then, by Lemma \ref{lem:approxpoten}, 
 $0 \leq \phi_i \leq T \|f\|_{\L^{\infty}([0,T] \times \R^n)}$ and
\begin{align*}
  \int_{\R^n} \phi_i \, \psi_j \, \d w_i \bigg |_t + \int_t^T \int_{\R^n} -\phi_i \, \div (w_i \nabla \psi_j) \, \d x \d t = \int_t^T \int_{\R^n} f \, \psi_j \d w_i \d t.
\end{align*}
Now, by Hölder's inequality and Lemma \ref{prop:advecsol}, we obtain
\begin{align*}
    &\sup_{0 \leq t \leq T}  \biggl| \int_t^T \int_{\R^n} -\phi_i \, \div (w_i \nabla \psi_j) \, \d x \d t\biggr| \\&\leq \|w_i\, \phi_i\|_{\L^p([0,T] \times \R^n)} \|w_i^{-1}\div(w_i \nabla \psi_j )\|_{\L^{p'}([0,T] \times \R^n)} 
    \\& \leq e^{T}\sup_{\R^n} w_i^{\frac{p-1}{p}} \|f\|_{\L^p_{w_i}([0,T] \times \R^n)}  \|w_i^{-1}\div(w_i \nabla \psi_j )\|_{\L^{p'}([0,T] \times \R^n)},
\end{align*}
for every $p>1.$ Letting $p <\frac{n}{n-1}$ and taking $j \to \infty$ in the inequality above, we arrive at
\begin{align*}
       \int_{\R^n} \phi_i  \, \d w_i  \bigg |_s =\int_s^T \int_{\R^n} f \, \d w_i \d t,
\end{align*}
for every $s \in [0,T],$ where we used Lemma \ref{lem:approxpoten} to derive
\begin{align*}
    &\sup_{\R^n} w_i = e^{-\inf_{\R^n} V_i}< \infty, \\
    &\sup_{\R^n}w_i^{-1}|\nabla w_i |
    \leq \|V_i\|_{C^{0,1}(\R^n)} < \infty.
    \end{align*}
   Now, using Lemma \ref{lem:approxpoten} and Proposition \ref{prop:advecsol}, $\phi_i(s,\cdot)$ converges pointwise to $\phi(s,\cdot)$ for every $s \in [0,T],$ up to a subsequence depending on $s$, and $w_i$ converges to $w$ pointwise and in $\L^1_{\loc}(\R^n)$. Hence, by Fatou's lemma and Lebesgue's dominated convergence, it is obtained that 
\begin{align*}
       \int_{\R^n} \phi  \, \d w  \bigg |_s \leq \int_s^T \int_{\R^n} f \, \d w \d t.
\end{align*}
for every $0 \leq s \leq T$. Moreover, by Lemma \ref{lem:approxpoten}, Proposition \ref{prop:advecsol}, and Lebesgue's dominated convergence, the inequality above is equality if $w \in \L^1(\R^n).$

\end{proof}

Now, we define the notion of convection solutions.
\begin{defn}
\label{def:convec2}
    We say that $\mu \in \mathcal{M}([0,T] \times \R^n)$ is a  convection solution for 
\begin{align}
\label{eq:convecsol2}
\partial_t \mu - \Delta \mu - \div ( \mu \nabla V ) = R,
\end{align}
    with initial value $g$ if 
\begin{align*}
    \int_{0}^T \int_{\R^n} f \, \d \mu = \int_{\R^n} \phi \, g \, \d x \bigg |_0 + \int_0^T \int_{\R^n} \phi \, R \,  \d x \d t,
\end{align*}
for every $f \in C^{\infty}_0([0,T] \times \R^n)$ and weak solution $\phi$ of \eqref{eq:dualsol2}. We say that a convection solution $\mu$ for
    \begin{align*}
        \partial_t \mu - \Delta \mu - \div(\mu \, \nabla V) = R,
    \end{align*}
    with the initial value $g$ satisfies the conservation of mass if the measure $\nu(I) := \mu(I \times \R^n)$, for Borel subsets $I \subset [0,T]$, is absolutely continuous with respect to Lebesgue's measure and
\begin{align*}
    \frac{\d \nu}{\d \mathcal{L}^1}(s)= \int_{\R^n} g \, \d x + \int_0^s \int_{\R^n} R \, \d x \d t,
\end{align*}
for a.e. $s \in [0,T].$
    
\end{defn}

\begin{proof}[Proof of Theorem \ref{thm:potentialdrift}]
We prove the Theorem in several steps.\\
\textbf{1}. Similar to Theorem \ref{thm:boundeddrift}, the uniqueness follows directly by the definition. To prove the existence, by the linearity of the equation, we can assume that $g, R$ are non-negative. Let $g_i,R_i$ be the non-negative smooth approximation of $g,R$ in $\L^1(\R^n), \L^1([0,T] \times \R^n)$ constructed in Theorem \ref{thm:boundeddrift}. By a standard theory of parabolic equations, see \cite{A}, there exists a non-negative weak solution $\mu_i \in C^{\infty}([0,T] \times \R^n) \cap \L^2([0,T]; \H^1(\R^n))$ of
\begin{align*}
    \partial_t \mu_i - \Delta \mu_i - \div(\mu_i \nabla V_i )  = R_i.
\end{align*}
with $\mu_i(0,x)=g_i(x)$ for every $x \in \R^n.$ By Theorem \ref{thm:boundeddrift}, we have
\begin{align*}
    \mu_i([0,T] \times \R^n) \leq T\int_{\R^n} g_i \, \d x + T \int_{0}^T \int_{\R^n} R_i \, \d x \d t,
\end{align*}
and 

\begin{align*}
 \int_{0}^T \int_{\R^n} f \, \d \mu_i = \int_{\R^n} \phi_i \, g_i \, \d x \bigg |_0 + \int_0^T \int_{\R^n} \phi_i \, R_i \,  \d x \d t,
\end{align*}
for every $f \in C_0^{\infty}([0,T] \times \R^n)$, where $\phi_i \in C^{\infty}([0,T] \times \R^n) \cap \L^2([0,T];\H^1(\R^n))$ is the weak solution of
\begin{align*}
    - \partial_t \phi_i - \Delta \phi_i + \nabla \phi_i \cdot \nabla V_i = f,
\end{align*}
in $[0,T] \times \R^n$ with $\phi(T,x)=0$ for $x \in \R^n.$ Hence, up to a subsequence, $\mu_i$ converges to $\mu \in \mathcal{M}([0,T] \times \R^n)$ in the vague topology, see \cite[Thm. 1.41]{EG}. Moreover, by Lemma \ref{lem:approxpoten}, $\phi_i$ is uniformly bounded and converges to $\phi \in \L^2_{w}([0,T];\H^1_w(\R^n)) \cap C([0,T];\L^2_{w}(\R^n))$ in $C([0,T];\L^1_{\loc}(\R^n))$, where $\phi$ is the unique adjoint convection solution of \eqref{eq:dualsol2}. In conclusion, 
\begin{align}
\label{eq:convecsoles}
 \int_{0}^T \int_{\R^n} f \, \d \mu = \int_{\R^n} \phi \, g \, \d x \bigg |_0 + \int_0^T \int_{\R^n} \phi \, R \,  \d x \d t,
\end{align}
for every $f \in C_0^{\infty}([0,T] \times \R^n)$, where $\phi$ is the adjoint convection solution of \eqref{eq:dualsol2}, and 
\begin{align}
\label{eq:upperbound}
 |\mu|([0,T] \times \R^n) \leq  T\int_{\R^n} g \, \d x \bigg |_0 + T \int_0^T \int_{\R^n}  R \,  \d x \d t.
\end{align}
Moreover, 
\begin{align*}
   \lim_{i \to \infty} \int_0^T \int_{\R^n} f \, \d \mu_i = \int_{\R^n} \phi \, g \, \d x \bigg |_0 + \int_0^T \int_{\R^n} \phi_i \, R \,  \d x \d t =\int_0^T \int_{\R^n} f \, \d \mu.
\end{align*}
Thus, $\mu_i$ converges vaguely to $\mu$ without any need to pass to a subsequence.\\
\textbf{2}.
Assume that \begin{align*}
 \int_{\R^n} |g|^{q'} w^{-\frac{q'}{q}} \, \d x   + \int_0^T \biggl(\int_{\R^n} |R|^{p'} w^{-\frac{p'}{p}} \, \d x\biggr)^{\frac{1}{p'}} \d t < \infty,
\end{align*}
for some $p>1,q >1$. To show that $\mu \in \L^1([0,T] \times \R^n)$, by Radon-Nikodym, it is enough to prove that $\mu(E)=0$ for every subset $E \subset [0,T] \times \R^n$ of Lebesgue measure zero. Consider a subset $E \subset [0,T] \times \R^n$ of Lebesgue measure zero and functions $f_i \in C^{\infty}_0([0,T] \times \R^n)$ approximating $1_E$ in $\L^{\infty}([0,T] \times \R^n)$-weak*. By Lemma \ref{lem:approxpoten}, there exists an adjoint convection solution $\gamma_i \in  \L^2([0,T];\H^1_w(\R^n)) \cap C([0,T];\L^2_{w}(\R^n))$ of 
\begin{align*}
    -\partial_t \gamma_i -e^{V} \div( e^{-V} \nabla \gamma_i)=f_i,  
\end{align*}
in $[0,T] \times \R^n$. Hence,  
\begin{align*}
       \int_{0}^T \int_{\R^n}  f_i \, \d \mu = \int_{\R^n} \gamma_i \, g \, \d x \bigg |_0 + \int_0^T \int_{\R^n} \gamma_i \, R \,  \d x \d t.
\end{align*}
Then, by Lemma \ref{lem:approxpoten} and Hölder's inequality, we have
\begin{align*}
 \biggl| \int_0^T \int_{\R^n} f_i \, \d \mu \biggr| &\leq \|\gamma_i(0,\cdot)\|_{\L^q_w( \R^n)} \, \|g\, w^{-1/q}\|_{\L^{q'}(\R^n)} \\& +T^{1/p} \sup_{0 \leq t \leq T}  \|\gamma_i(t,\cdot) \|_{\L^p_w(\R^n)} \, \|R \, w^{-1/p}\|_{\L^1([0,T];\L^{p'}(\R^n))}
  \\ & \leq e^T \|f_i\|_{\L_w^q([0,T] \times \R^n)} \|g\, w^{-1/q}\|_{\L^{q'}(\R^n)}\\& +T^{1/p} e^T \|f_i\|_{\L^p_w([0,T] \times \R^n)}  \|R \, w^{-1/p}\|_{\L^1([0,T];\L^{p'}(\R^n))}.
\end{align*}
Taking $i \to \infty$ implies that $\mu(E)=0.$\\
\textbf{3}. In this part, we derive an upper bound estimate for the convection solution. Let $R,g$ satisfy \eqref{eq:boundednes}. Without loss of generality, again we assume that $g,R$ are non-negative and $\mu$ is the convection solution of \eqref{eq:convecsol2} with initial data $g.$ Let $s \in (0,T)$ and $x \in \R^n$. We consider a sequence of function $f_i \in C^{\infty}_0(
   (1+\frac{1}{i})I \times (1+\frac{1}{i})B)$ satisfying $f_i =1$ in $I \times B$, where $I \subset [0,T], B \subset \R^n$ are open balls satisfying  $s \in I, x \in  B$, and 
$i$ is large enough such that $ (1+\frac{1}{i}) I \subset [0,T]$. Then,
   \begin{align*}
       \int_{0}^T \int_{\R^n} \frac{1}{|I \times B|} f_i \, \d \mu = \int_{\R^n} \psi_i \, g  \, \d x + \int_{0}^T \int_{\R^n} \psi_i\,  R  \, \d x \d t,
   \end{align*}
where $\psi_i \in \L^2_{w}([0,T];\H^1_w(\R^n)) \cap C([0,T];\L^2_{w}(\R^n))$ is the non-negative adjoint convection solution of 
\begin{align*}
      -\partial_t \psi_i - e^V  \div(e^{-V} \nabla \psi_i) =\frac{1}{|I \times B|}f_i,
\end{align*}
in $[0,T] \times \R^n$. Hence, by Lemma \ref{lem:upperL1bound}, it is concluded that
\begin{align*}
\frac{\mu(I \times B)}{|I \times B|}  \leq  \frac{1}{|I \times B|}  \int_{0}^T \int_{\R^n}f_i \, \d \mu &= \frac{1}{|I \times B|} \biggl(\int_{\R^n} \psi_i \, g  \, \d x + \int_{0}^T \int_{\R^n} \psi_i\,  R  \, \d x \d t\biggr)  \\ &\leq \frac{1}{|I \times B|}  \|g\,  w^{-1}\|_{\L^{\infty}} \int_0^T \int_{\R^n} f_i \, \d w \d t \\ &+ \frac{T}{|I \times B|} \|R \, w^{-1}\|_{\L^{\infty}} \int_{0}^T \int_{\R^n} f_i \, \d w \d t.
\end{align*}
Hence, by \cite[Lemma 1.2]{EG}, Radon-Nikodym theorem, and Step 1, it is concluded that $\mu \in \L^1([0,T] \times \R^n)$ and 
\begin{align*}
   \|\mu \, w^{-1}\|_{\L^{\infty}} \leq \|g \,  w^{-1}\|_{\L^{\infty}}+ T \|R \, w^{-1}\|_{\L^{\infty}}.  
\end{align*}
To prove the strong maximum principle, without loss of generality, let $R,g$ be non-negative and $\inf g \, w^{-1} +\inf R \, w^{-1}>0$. Then, $w \in \L^1(\R^n)$ and, by Lemma \ref{lem:upperL1bound}, we have 
\begin{align*}
    &\frac{\mu(\big(1+\frac{1}{i}\big)I \times \big(1+\frac{1}{i}\big)B)}{\big|\big(1+\frac{1}{i}\big)I \times \big(1+\frac{1}{i}\big)B\big|}  \geq  \frac{1}{\big|\big(1+\frac{1}{i}\big)I \times \big(1+\frac{1}{i}\big)B\big|}  \int_{0}^T \int_{\R^n}f_i \, \d \mu \\  &= \frac{1}{|I \times B|} \biggl(\int_{\R^n} \psi_i \, g  \, \d x + \int_{0}^T \int_{\R^n} \psi_i\,  R  \, \d x \d t\biggr)  
    \\&\geq  \frac{\inf g \, w^{-1}}{\big|\big(1+\frac{1}{i}\big)I \times \big(1+\frac{1}{i}\big)B\big|}  \, \int_{0}^T \int_{\R^n} f_i \, \d w \d t
    \\ &  + \frac{\inf R \, w^{-1}}{\big|\big(1+\frac{1}{i}\big)I \times \big(1+\frac{1}{i}\big)B\big|}\int_{0}^T \int_t^T \int_{\R^n} f_i \, \d w \d r \d t.
\end{align*}
This concludes that
\begin{align*}
     \liminf_{r \to 0} \frac{\mu((s-r,s+r) \times B(x,r))}{|(s-r,s+r) \times B(x,r)|}  \geq \inf (g\, w^{-1}  + s \inf R\, w^{-1})\,  w(x),
\end{align*}
for every $0<s<T$ and Lebesgue point $x \in \R^n$ of $w.$\\
\textbf{4}. Now, we prove the conservation of mass. Assume that $w \in \L^1(\R^n)$ and $R,g$ are non-negative functions. We take the same sequences of functions $R_i,g_i,V_i,\mu_i$ as in Step 1. Define $f=1_{\R^n \setminus B}$ for a ball $B \subset \R^n$. Then, by Lemma \ref{lem:approxpoten}, $w_i$ converges to $w$ in $\L^1(\R^n)$ and $f \in \L^{\infty}([0,T] \times \R^n) \cap \L^2_{w_i}([0,T] \times \R^n)$ for every $i$. Hence, by Proposition \ref{prop:advecsol} and Lemma \ref{lem:upperL1bound}, there exists an adjoint convection solution $\phi^B_i \in C([0,T]; \L^2_w(\R^n)) \cap \L^2([0,T]; \H^1_w(\R^n))$ of \begin{align*}
      -\partial_t \phi^B_i - e^{V_i}\div (e^{-V_i}\, \nabla \phi^B_i ) =f,  
\end{align*}
in $[0,T] \times \R^n$, where 
\begin{align*}
    \|\phi^B_i\|_{\L^{\infty}(\R^n)} \leq T, \,  \sup_{0 \leq t \leq T}\|\phi^B_i(t,\cdot)\|_{\L_{w_i}^{1}(\R^n)} \leq w_i(\R^N \setminus B). 
\end{align*}
Moreover, as $i \to \infty$, $\phi_i^B$ converges to $\phi^B$ in $C([0,T]; \L^1_{\loc}(\R^n))$. Hence, by Fatou's lemma and Lebesgues's dominated convergence, we derive
\begin{align*}
\sup_{0\leq t \leq T}\|\phi^B(t,\cdot)\|_{\L_{w}^{1}(\R^n)} &\leq w(\R^N \setminus B), \\
 \limsup_{i \to \infty}  \int_0^T \int_{\R^n \setminus B} \, \d \mu_i &\leq \int_{\R^n}  \phi^B \, g \, \d x \bigg |_0 + \int_{0}^T \int_{\R^n}  \phi^B \, R \, \d x \d t.
\end{align*}
In conclusion, by replacing $B$ by $B(0,j)$ and using again Proposition \ref{prop:advecsol}, we derive that, $\phi^{B(0,j)}$ is uniformly bounded and converges to zero in $C([0,T];\L^1_{\loc}(\R^n))$. Then, \begin{align*}
  \lim_{j \to \infty}  \limsup_{i \to \infty}  \int_0^T \int_{\R^n \setminus B(0,j)} \, \d \mu_i =0,
\end{align*}   
where we used Lebesgue's dominated convergence above. Hence, $\mu_i$ is a tight sequence, and, by Prokhorov's theorem and Step 1, it converges weakly to $\mu$. Let $\Phi \in C^1([0,T])$ be a fixed function where $\Phi(T) =0$. Then, $-\partial_t \Phi - \Delta \Phi + \nabla \Phi \cdot \nabla V_i = -\partial_t \Phi$ weakly in $[0,T] \times \R^n$ and
\begin{align*}
   \int_0^T \int_{\R^n}  - \partial_t \Phi \, \mu_i  \, \d t  \d x =    \int_{\R^n}\Phi  \, g_i \, \d x + \int_0^T \int_{\R^n}\Phi \, R_i \, \d x \d t.
\end{align*}
Taking $i \to \infty$ and using the weak convergence of $\mu_i$ to $\mu$, we arrive at 
\begin{align*}
     \int_0^T \int_{\R^n}  -  \partial_t \Phi \, \d \mu =    \int_{\R^n} \Phi  \, g \, \d x + \int_0^T \int_{\R^n}\Phi \, R  \, \d x \d t.
\end{align*}
Let $s \in (0,T)$ be a fixed point, $i > \max(\frac{1}{T-s}, \frac{1}{s})$, and $\varphi_{\epsilon}$ is an approximation of the identity. Setting $\Phi_{\epsilon}(t) = \frac{i}{2} \int_{t}^T \varphi_{\epsilon} \ast 1_{[s-\frac{1}{i}, s+\frac{1}{i}]}  \, \d t$ for every $t \in [0,T],$ we arrive at
\begin{align*}
     \int_0^T \int_{\R^n}  -  \partial_t \Phi_{\epsilon} \, \d \mu =   \int_{\R^n}\Phi_{\epsilon} \, g \, \d x \bigg |_0 + \int_0^T \int_{\R^n} \Phi_{\epsilon} \, R\, \d x \d t.
\end{align*}
Taking $\epsilon \to 0$ implies that \begin{align*}
   \frac{i}{2} \int_{s-\frac{1}{i}}^{s+\frac{1}{i}} \int_{\R^n} \, \d \mu = \int_{\R^n} g \, \d x + \int_{0}^{s-\frac{1}{i}} \int_{\R^n} R\, \d t \d x+\frac{i}{2} \int_{s-\frac{1}{i}}^{s+\frac{1}{i}} \int_{\R^n}\biggl(\frac{i\, s+1-i\, t}{2}\biggr) R \, \d r \d t \d x.
\end{align*} 
Hence, by \cite[Lem. 1.2]{EG}, Radon-Nikodym theorem, and letting $i \to \infty$, we arrive at the signed measure $\nu(I) := \mu(I \times \R^n)$, for Borel subsets $I \subset [0,T]$, is absolutely continuous with respect to Lebesgue's measure and
\begin{align*}
 \frac{\d \nu}{\d \mathcal{L}^1}(s) =   \int_{\R^n} g \, \d x + \int_0^s \int_{\R^n} R  \, \d x \d t
\end{align*}
 for a.e. $s \in [0,T].$\\
 \textbf{5}. Finally, we prove the continuous to the initial data. Without loss of generality, we assume that $R,g$ are non-negative functions. Let $f \in C^{\infty}_0(\R^n)$ and $\phi_i \in \L^2([0,T];\H^1(\R^n))$ be the smooth weak solution of
 \begin{align*}
     -\partial_t \phi_i - \Delta \phi_i + \nabla V_i \cdot \nabla \phi_i = f,
 \end{align*}
in $[0,T] \times \R^n$ with $\phi_i(T,x)=0$ for every $x \in \R^n,$ and $g_i,R_i, \mu_i$ be the sequences defined in Step 1. Then, by Lemma \ref{lem:timederiv}, we have
\begin{equation*}
 \int_{0}^s \int_{\R^n} f \, \mu_i \, \d x \d t= \int_{\R^n} \phi_i^s \, g_i \, \d x \bigg |_0 + \int_0^s \int_{\R^n} \phi_i^s \, R_i \,  \d x \d t, 
 \end{equation*}
 where $s \in (0,T), \phi_i^s(t,x) := \phi_i(t+T-s,x)$ for every $ 0 \leq t \leq s , x \in \R^n$. Let $0 < s < T$ be fixed. Now, using $\phi_i^s(s,x)=\phi_i(T,x)=0$ for every $x \in \R^n$ and taking the derivative of both sides of above equality with respect to $s$, we obtain
\begin{align*}
     \int_{\R^n} f \, \mu_i \, \d x \bigg |_s = \int_{\R^n} -\partial_t \phi_i \, g_i \, \d x \bigg |_{T-s} + \int_0^s \int_{\R^n} -\partial_t \phi_i(t+T-s,\cdot)\, R_i \, \d x \d t.
\end{align*}
Hence, by Lemma \ref{lem:timederiv}, it is concluded that
\begin{align*}
   \biggl | \int_{\R^n} f \, \mu_i \, \d x \bigg |_s - \int_{\R^n} f\, g \, \d x \biggr | &\leq \biggl |  \int_{\R^n} -\partial_t \phi_i \, g_i \, \d x \bigg |_{T-s} - \int_{\R^n} f \, g \, \d x \biggr |\\& + \|f\|_{\L^{\infty}} \int_0^s \int_{\R^n} R_i \, \d x \d t.
\end{align*}
 Then, by vague convergence of $\mu_i$ to $\mu$ proved in Step 1, Lemma \ref{lem:timederiv}, and Lebesgue's dominated convergence, we have
\begin{align*}
  \biggl | \frac{1}{h} \int_s^{s+h}\int_{\R^n} f \, d \mu - \int_{\R^n} f\, g \, \d x \biggr | &= \lim_{i \to \infty} \biggl | \frac{1}{h} \int_{s}^{s+h} \int_{\R^n} f \, \mu_i \, \d x \d t  - \int_{\R^n} f\, g \, \d x \biggr | \\ & \leq \frac{1}{h}\int_s^{s+h}\biggl |  \int_{\R^n} -\partial_t \phi \, g \, \d x \bigg |_{T-t} - \int_{\R^n} f \, g \, \d x \biggr | \, \d t\\& + \|f\|_{\L^{\infty}} \frac{1}{h}\int_s^{s+h} \int_0^t \int_{\R^n} R \, \d x \d r \d t,
\end{align*}
for every $0<h < T-s$. In conclusion, by using Lemma \ref{lem:timederiv} and Lebesgue's dominated convergence again, we arrive at
\begin{align*}
    \lim_{(h,s) \to 0} \biggl | \frac{1}{h} \int_s^{s+h} \int_{\R^n} f \, d \mu - \int_{\R^n} f\, g \, \d x \biggr |=0,
\end{align*}
where the limit is taken in the domain $0<h+s <T$.

\end{proof}

\begin{exm}
    If $R=0, g =  e^{-V}$, then by Theorem \ref{thm:potentialdrift}, the convection solution $\mu$ of
    \begin{align}
    \label{eq:stableconv}
    \partial_t \mu -\Delta \mu - \div(\mu \nabla V)=0,    
    \end{align}
     in $[0,T] \times \R^n$ with initial data $g$ satisfies $\mu \in \L^1([0,T] \times  \R^n)$ and  
\begin{align*}
  1 \leq \mu \, e^{V} \leq 1
\end{align*}
for a.e. $(t,x) \in [0,T] \times \R^n$. Hence, $\mu = e^{-V}$ which is the stable solution of \eqref{eq:stableconv}.

\end{exm}

\begin{rem}
    We remark that if $\nabla V \in \L^{\infty}(\R^n)$, equivalently $V$ is globally Lipschitz, then, by Proposition \ref{prop:dualsol} and Proposition \ref{prop:advecsol}, Definition \ref{def:dualsol} and Definition \ref{def:dualsolpoten} coincide. In conclusion, by Theorem \ref{thm:boundeddrift} and Theorem \ref{thm:potentialdrift}, the two definitions of convection solutions, namely Definition \ref{def:convec1} and Definition \ref{def:convec2}, are equivalent whenever $\nabla V \in \L^{\infty}(\R^n)$. Furthermore, with a slight change in the arguments, we can generalize the definition of convection-diffusion solutions and Theorem \ref{thm:potentialdrift} to $\partial_t \mu -\div A \nabla \mu - \div(\mu \nabla V)$, where $A$ is a positive definitive matrix with constant coefficients.
\end{rem}

\end{document}